\documentclass[reqno]{amsart}

\usepackage{amsmath,amssymb,amsthm,comment,enumerate,mathrsfs}
\usepackage{hyperref}
%\usepackage[makeroom]{cancel}

%\usepackage[notref,notcite,color]{showkeys}
%\definecolor{labelkey}{rgb}{0.55,0,0} 
 
%\usepackage[left=1.45in,right=1.45in]{geometry}
%\addtolength{\textwidth}{}
%\addtolength{\evensidemargin}{-5.9mm} % 21cm - 8.5in

%\usepackage{comment}

\sloppy

\swapnumbers

\numberwithin{equation}{section}

\newtheorem{Thm}{Theorem}[section]
\newtheorem{Prop}[Thm]{Proposition}
\newtheorem{Lem}[Thm]{Lemma}
\newtheorem{Cor}[Thm]{Corollary}
\theoremstyle{definition}
\newtheorem{Rem}[Thm]{Remark}
\newtheorem{Def}[Thm]{Definition}
\newtheorem{Expl}[Thm]{Example}

\newcommand{\R}{\mathbb{R}}

%points between
%cone

\newcommand{\diam}{\operatorname{diam}}

\newcommand{\eps}{\varepsilon}

%Hausdorff

\renewcommand{\rho}{\varrho}

% include a prescaled ps-file: name[.eps], scale, caption
%\newcommand{\insfig}[3]{
% \begin{figure}[hbpt]
%  \centerline{\includegraphics[scale=#2]{#1.eps}} \caption{#3\label{fig:#1}}
% \end{figure}
%}

\hyphenation{Lip-schitz ho-meo-mor-phic}

\title{Injective Convex Polyhedra}

\author{Ma\"el Pav\' on}
\address{Department of Mathematics, ETH Z\"urich, 8092 Z\"urich, Switzerland}
\email{mael.pavon@math.ethz.ch}

\date{\today}
%\pagestyle{myheadings}
%\markboth{U.~Lang, M.~Pav\' on}{}

\begin{document}

%%%%%%%%%%%%%%%%%%%%%%%%%%%%%%%%%%%%%%%%%%%%%%%%%%%%%%%%%%%%%%%%%%%%%%%%%%%%%%%

\maketitle

\begin{abstract}
It was shown by Nachbin in 1950 that an $n$-dimensional normed space $X$ is injective or equivalently is an absolute 1-Lipschitz retract if and only if $X$ is linearly isometric to $l_\infty^n$ (i.e., $\R^n$ endowed with the $l_{\infty}$-metric). We give an effective convex geometric characterization of injective convex polyhedra in~$l_{\infty}^n$. As an application, we prove that if the set of solutions to a linear system of inequalities with at most two variables per inequality is non-empty, then it is injective when endowed with the $l_{\infty}$-metric.
\end{abstract}

\section{Introduction}

We call a metric space $X$ \textit{injective} if for any metric spaces $A,B$ such that there exists an isometric embedding $i \colon A \to B$ and for any $1$-Lipschitz (i.e., distance nonincreasing) map $f \colon A \to X$, there is a $1$-Lipschitz map $g \colon B \to X$ satisfying $g \circ i = f$ (cf. \cite[Section~9]{AdaHS} for the general categorical definition). In particular, it follows from a result of Nachbin that a real normed space $X$ is injective in the the category of metric spaces if and only if $X$ is injective in the category of linear normed spaces.

The purpose of the present work is to provide an effective characterization of injective convex polyhedra in $l_{\infty}^n$ by proving an easy combinatorial criterion. It is important to note that only the case of the  $l_{\infty}$-metric is relevant since if a convex polyhedron $P \subset \R^n$ with non-empty interior is injective for some norm $\left\| \cdot \right\|$ on $\R^n$, then considering an increasing sequence of rescalings of $P$ whose union is equal to $\R^n$, it follows by Lemma \ref{Lem:l12} that the  space $(\R^n$, $\left\| \cdot \right\|)$ is itself injective and by \cite[Theorem~3]{Nac}, which states that an $n$-dimensional normed space $X$ is injective if and only if $X$ is linearly isometric to $l_\infty^n$, it follows that $(\R^n$, $\left\| \cdot \right\|)$ is isometric to $l_{\infty}^n$.

Note at this point that linear subspaces of injective normed spaces need not be injective. A straightforward example is the plane 
\begin{equation}\label{eq:e0000000}
V := \{x \in l_\infty^3 : x_1 + x_2 + x_3 = 0\}
\end{equation}
which is not injective since it can be easily seen that the unit ball of $V$ is an hexagon and thus $V$ cannot be isometric to $l_\infty^2$. Furthermore, Example \ref{Expl:expl1} exhibits a non-injective convex polyhedron with injective supporting hyperplanes and Example \ref{Expl:expl2} an injective convex polyhedron with a non-injective face.

It was noted in \cite{Lan} that a good characterization of injective polytopes is missing. The present work gives a solution to this problem. We shall start by giving in the next section a characterization of injective affine subspaces of $l_\infty^n$ and as a consequence we shall obtain an easy injectivity criterion for hyperplanes, namely if $\nu \in \R^n \setminus \{ 0\}$, then the hyperplane 
\begin{equation}\label{eq:e00000001}
X := \{ x \in \R^n : x \cdot \nu = 0 \} \subset l_{\infty}^n
\end{equation}
(where $x \cdot y$ denotes the standard scalar product on $\R^n$) is injective if and only if 
\begin{equation}\label{eq:e00000}
\left\| \nu \right\|_{1} \le 2 \left\| \nu \right\|_{\infty}.
\end{equation}
For $\alpha \in \R$ and $\emptyset \neq A,B \subset \R^n$, we define $\alpha A, A+B, A-B \subset \R^n$ in the obvious way and we set $[a,b]A:=\bigcup_{\alpha \in [a,b]} \alpha A$.
For a convex polyhedron $\emptyset \neq P \subset \R^n$ and a point $p \in P$, the \textit{tangent cone} $\mathrm{T}_pP$ is given by  
\[
\mathrm{T}_pP := \bigcup_{m \in \mathbb{N}} P_{p,m} \ \text{ where } \ P_{p,m} := p + m (P - p).
\]
The effective characterization we are aiming at will be obtained in two steps. First, we shall prove that injectivity follows from a local injectivity property namely injectivity of tangent cones. It is no restriction to assume that the interior of $P$ satisfies $\mathrm{int}(P) \neq \emptyset$ in the next theorem:
\begin{Thm}\label{Thm:t00001}
Let $P\subset l_{\infty}^n$ be a convex polyhedron such that $\mathrm{int}(P) \neq \emptyset$. Then, the following are equivalent:
\begin{enumerate}[$(i)$]
\item $P$ is injective.
\item $\mathrm{T}_pP$ is injective for every $p \in \partial P$.
\end{enumerate}
\end{Thm}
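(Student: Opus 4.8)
The plan is to use the equivalence between injectivity and being an absolute $1$-Lipschitz retract (cf. \cite{AdaHS}): since $l_\infty^n$ is itself injective by \cite{Nac}, and a $1$-Lipschitz retract of an injective space is injective, the task in the nontrivial direction reduces to constructing a $1$-Lipschitz retraction $r \colon l_\infty^n \to P$. The engine I would rely on is that $l_\infty^n$ is a geodesic (length) space: along any path the image under a continuous, \emph{locally} $1$-Lipschitz map has length bounded by the length of the path, so such a map is automatically globally $1$-Lipschitz. Consequently it suffices to produce a continuous retraction $r$ onto $P$ that is $1$-Lipschitz in a neighborhood of every point of $l_\infty^n$; this turns a global problem into a local one, for which the tangent cones are exactly the relevant local models.

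For $(i) \Rightarrow (ii)$, fix $p \in \partial P$ and recall $\mathrm{T}_pP = \bigcup_{m} P_{p,m}$ with $P_{p,m} = p + m(P-p)$, an increasing union because $P$ is convex and $p \in P$. Each $P_{p,m}$ is the image of $P$ under the homothety $x \mapsto p + m(x-p)$, which multiplies the $l_\infty$-metric by $m$; as injectivity is invariant under rescaling the metric, $P_{p,m}$ is injective whenever $P$ is. Thus $\mathrm{T}_pP$ is an increasing union of injective subsets of $l_\infty^n$, and Lemma \ref{Lem:l12} gives its injectivity.

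For the substantive direction $(ii) \Rightarrow (i)$, I would build the required retraction locally from the tangent cones. Writing $P = \bigcap_j H_j$ as a finite intersection of half-spaces and $J(p) = \{ j : p \in \partial H_j\}$ for $p \in \partial P$, one has $\mathrm{T}_pP = \bigcap_{j \in J(p)} H_j =: C$ together with the polyhedral fact that there is $\eps > 0$ with $P \cap \bar B(p,\eps) = C \cap \bar B(p,\eps)$ (the inactive constraints are slack near $p$). By hypothesis $C$ is injective, so it admits a $1$-Lipschitz retraction $\rho_p \colon l_\infty^n \to C$ with $\rho_p(p) = p$. For $x \in \bar B(p,\eps/2)$ one gets $\|\rho_p(x) - p\|_\infty \le \|x - p\|_\infty \le \eps/2$, hence $\rho_p(x) \in C \cap \bar B(p,\eps) = P \cap \bar B(p,\eps) \subseteq P$; and since $\rho_p$ fixes $C$ it fixes $P \cap \bar B(p,\eps)$. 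Thus $\rho_p$ restricts to a genuine $1$-Lipschitz retraction onto $P$ on a neighborhood of $p$, while near interior points of $P$ the identity serves.

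The main obstacle is the final assembly: these local retractions are defined on different neighborhoods and need not agree, so one must organize them into a single continuous, locally $1$-Lipschitz retraction. I would attempt this by partitioning $l_\infty^n$ into regions indexed by the faces of $P$ (the tangent cone being constant along the relative interior of a fixed face) and applying on each region the retraction attached to that face, then checking continuity and local $1$-Lipschitzness along the interfaces where the responsible face changes. The delicate point is compatibility across the face lattice: when a face $G$ lies in the closure of a face $F$, the cone $\mathrm{T}_pP$ for $p \in G$ contains the directions of $\mathrm{T}_{p'}P$ for $p'$ in the relative interior of $F$, and one must exploit this nesting, together with the convexity of $P$ and the cone structure, to see that the locally defined retractions match up well enough to glue. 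I expect this bookkeeping — isolating, at each boundary stratum, a single local retraction onto $P$ and verifying the gluing hypotheses of the length-space criterion — to be the technical heart of the argument; once it is in place, the resulting map is a global $1$-Lipschitz retraction and $P$ is injective.
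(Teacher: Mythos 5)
Your direction $(i)\Rightarrow(ii)$ is exactly the paper's argument (rescalings plus Lemma \ref{Lem:l12}), and your local analysis for $(ii)\Rightarrow(i)$ --- that $\mathrm{T}_pP$ agrees with $P$ on a ball around $p$, so a $1$-Lipschitz retraction onto the injective cone $\mathrm{T}_pP$ restricts to a retraction onto $P$ near $p$ --- is also how the paper begins. But the step you defer (your ``technical heart'') is genuinely missing, and the route you sketch for it would fail. Gluing the maps $\rho_p$ over a partition of $l_\infty^n$ indexed by faces cannot work as stated: $1$-Lipschitz retractions onto convex subsets of $l_\infty^n$ are far from unique, the retractions attached to adjacent strata have no reason to agree on the interfaces, so the piecewise-defined map is in general not even continuous, and the length-space criterion (which presupposes a globally well-defined continuous map) never gets off the ground. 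Worse, away from $P$ your face-attached map only retracts onto the cone $\mathrm{T}_{p_F}P$, not onto $P$: your containment $\rho_p(x)\in P$ uses $\left\|x-p\right\|_\infty\le\eps/2$, so the proposed glued map does not even take values in $P$ at points far from the boundary.

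The paper resolves both problems without any gluing. First, it fixes one representative point $p^j$ in the relative interior of each proper face, one retraction $\rho_j$ of $l_\infty^n$ onto $\mathrm{T}_{p^j}P$, and takes the single global composition $\widetilde{\rho}=\rho_1\circ\cdots\circ\rho_N$; this map is automatically $1$-Lipschitz and fixes $P$ pointwise, since each tangent cone contains $P$. A stratified induction over face dimensions produces a uniform $\delta>0$ such that every $x\in N(\partial P,\delta)$ lies in some $U(p,\eps_p)$ on which $\mathrm{T}_pP$ and $P$ coincide; then, at the moment the factor of $\widetilde{\rho}$ corresponding to the face whose relative interior contains $p$ is applied, the orbit of $x$ enters $P\cap U(p,\eps_p)$ (distances to $p$ never increase along the composition) and remains in $P$ afterwards, so $\widetilde{\rho}$ restricts to a retraction of $N(P,\delta)$ onto $P$. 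Second --- and this is the part your proposal has no counterpart for --- the paper extends from $N(P,\delta)$ to all of $l_\infty^n$ by writing $P=Q+C$ (Theorem \ref{Thm:t001}), choosing $\kappa>1$ with $\kappa P\subset N(P,\delta)$, and composing the rescaled retractions $\rho^m\colon \kappa^m P\to\kappa^{m-1}P$ as an inverse limit. The Example placed right after the paper's proof shows this two-stage structure is not an artifact: applying the tangent-cone retractions to a point far from $P$ can land outside $P$, so some mechanism that first confines the construction to a small neighborhood and then propagates it by rescaling is unavoidable.
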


By a \textit{convex polyhedron} in $\R^n$ we mean a finite intersection of closed half-spaces. Closed half-spaces are just called half-spaces when no ambiguity arises. A \textit{convex polytope} is then a compact convex polyhedron. A \textit{cone} $C$ is a subset of $\R^n$ such that $x \in C$ implies $\lambda x \in C$ for any $\lambda \ge 0$. Convex polyhedra which are additionally cones are called \textit{convex polyhedral cones}. If $C$ is a convex polyhedral cone and $x \in \R^n$, the \textit{apex} $\mathrm{apex}(x + C)$ of a translate of $C$ is defined as the affine space $x + V$ where $V$ is the biggest linear subspace of $\R^n$ contained in $C$. It is easy to see that $\mathrm{T}_pP - p$ is a convex polyhedral cone. In the sequel, the relative interior of a subset $S$ is denoted by $\mathrm{relint}(S)$. The \textit{dimension} of a convex polyhedron $P \subset \R^n$ is the dimension of its affine hull. One has $\mathrm{int}(P) \neq \emptyset$ if and only if $\mathrm{dim}(P) = n$ and in this case, $F$ is a \textit{facet} of $P$ if and only if $F$ is a face of $P$ and $\mathrm{dim}(F) = n-1$. Let us denote by $\mathrm{Faces}(P)$ and $\mathrm{Facets}(P)$ the set of non-empty faces and the set of facets of $P$ respectively, for any subset $S \subset \R^n$ let $\mathrm{Faces}(P,S):= \{ F \in \mathrm{Faces}(P) : F \cap S \neq~\emptyset \}$ and let $\mathrm{Faces}(P,S)^{c}$ be the complement of $\mathrm{Faces}(P,S)$ in $\mathrm{Faces}(P)$. Moreover, $\mathrm{Facets}^*(P,S):= \{ F \in \mathrm{Facets}(P) : \mathrm{relint}(F) \cap S \neq \emptyset \}$. Note that the closed unit ball $B(0,1) \subset l_{\infty}^n$ is nothing but the $n$-hypercube $[-1,1]^{n}$ endowed with the $l_{\infty}$-metric. The following theorem characterizes injective convex polyhedral cones:

\begin{Thm}\label{Thm:t2}
A convex polyhedral cone $C \subsetneq l_{\infty}^n$ with $\mathrm{int}(C) \neq \emptyset$ is injective if and only if the following hold:
\begin{enumerate}[$(i)$]
\item $\mathrm{T}_pC$ is injective for every $p \in \partial C \setminus \mathrm{apex}(C)$.
\item There is a facet $F \in \mathrm{Facets}^*([-1,1]^{n},C)$ such that $-F \notin \mathrm{Facets}^*([-1,1]^{n},C)$.
\end{enumerate}
\end{Thm}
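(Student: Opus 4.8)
The plan is to read Theorem~\ref{Thm:t2} as a sharpening of Theorem~\ref{Thm:t00001}. First I would record the behaviour of the tangent cones at the apex: for $p \in \mathrm{apex}(C)$ the lineality invariance $C + (\mathrm{apex}(C) - p) = C$ yields $\mathrm{T}_pC = C$, whereas for $p \in \partial C \setminus \mathrm{apex}(C)$ the point lies in the relative interior of a face strictly larger than $\mathrm{apex}(C)$, so $\mathrm{T}_pC$ has strictly larger lineality space and is genuinely simpler. Consequently Theorem~\ref{Thm:t00001} reads ``$C$ injective $\iff$ $C$ injective and $(i)$'', which on its own gives only the implication $C\text{ injective}\Rightarrow(i)$. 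The real content of Theorem~\ref{Thm:t2} is therefore the statement: \emph{assuming $(i)$, $C$ is injective if and only if $(ii)$ holds}. Note that Theorem~\ref{Thm:t00001} is powerless for the ``if'' part, since the apex points feed $C$ itself back into its hypothesis; this circularity is exactly what $(ii)$ must break.

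Next I would give $(ii)$ a recession-theoretic form. The facet $F_i^s$ of $[-1,1]^n$ centred at $se_i$ ($s\in\{+1,-1\}$) has $\mathrm{relint}(F_i^s)=\{x : x_i = s,\ |x_j| < 1 \text{ for } j \neq i\}$, and since $C$ is a cone one checks by positive rescaling that $\mathrm{relint}(F_i^s)\cap C \neq \emptyset$ if and only if $C$ meets the open coordinate sector $Q_i^s := \{x \in \R^n : s\,x_i > |x_j| \text{ for all } j \neq i\}$. Thus $\mathrm{Facets}^*([-1,1]^{n},C)$ records exactly which of the $2n$ sectors $C$ penetrates; $(ii)$ asserts that this set of penetrated sectors is not invariant under $x \mapsto -x$, and its negation says the penetrated sectors occur in antipodal pairs. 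After permuting and reflecting coordinates I may assume $C \cap Q_n^{+} \neq \emptyset$ while $C \cap Q_n^{-} = \emptyset$; in particular $-e_n \notin C$, since $-e_n \in \mathrm{relint}(F_n^{-})$.

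For the forward direction, $(i)$ is immediate from Theorem~\ref{Thm:t00001}. To obtain $(ii)$ I would argue by contraposition: assuming the penetrated sectors are antipodally symmetric, I would produce a failure of hyperconvexity of $C$. Concretely, for each penetrated pair I would choose recession rays $u^{+}\in C\cap Q_i^{s}$ and $u^{-}\in C\cap Q_i^{-s}$, and from these antipodal rays assemble a finite family of points of $C$ with radii satisfying the pairwise ball condition yet whose common cube-intersection misses $C$ — the same mechanism that makes the hexagonal cross-section of the plane $V$ in~\eqref{eq:e0000000} fail to be injective. Pinning down the explicit family is the bulk of this direction, but it is of the routine ``exhibit a bad configuration'' type.

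The hard part will be the converse: assuming $(i)$ and $(ii)$, build a $1$-Lipschitz retraction $r \colon l_\infty^n \to C$ (equivalently, verify hyperconvexity). Here $(ii)$ supplies the asymmetry that drives the construction — the absence of any recession direction dominated by $-e_n$ means $C$ does not ``open downward'' in the $n$-th coordinate — and I would use it to define $r$ as a sweep in the $e_n$-direction followed by a transverse correction controlled by the injectivity of the tangent cones granted by $(i)$. The essential difficulty, and the step I expect to absorb most of the effort, is to verify that $r$ is $1$-Lipschitz: naively clamping the $n$-th coordinate into the fibre $\{t : (x_1,\dots,x_{n-1},t) \in C\}$ is \emph{not} distance-nonincreasing, since the endpoints of that fibre may vary with slope greater than one (for instance for $C = \{\,\tfrac12 x_1 \le x_2 \le 2x_1\,\} \subset l_\infty^2$). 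The correct retraction must therefore be coupled across coordinates, and the crux is to show that $(i)$ together with the one-sidedness extracted from $(ii)$ forces this coupled map to contract in the $l_\infty$-metric — precisely the injectivity information along the apex directions that Theorem~\ref{Thm:t00001} cannot reach.
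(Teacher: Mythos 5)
Your structural reading is accurate: condition $(ii)$ is indeed sector asymmetry of the recession behaviour, and you correctly diagnose why Theorem~\ref{Thm:t00001} is circular for cones (at apex points $\mathrm{T}_pC = C$). But both substantive implications are left as plans rather than proofs, and in each case the missing step is precisely where the paper's work lies. For the forward direction of $(ii)$, you propose to contradict hyperconvexity by exhibiting, for an antipodally symmetric cone $C \subsetneq \R^n$, a finite family of balls centered in $C$ that pairwise intersect but miss $C$, calling this ``routine.'' It is not: the hexagon example does not generalize by analogy, and the paper never exhibits such a family. Instead it proves Lemma~\ref{Lem:l3} by a \emph{positive} use of hyperconvexity: assuming symmetry, one builds for every $x$ an intersection $S$ of pairwise-intersecting balls centered in $C$ with $S \subset \bigcap_{i \in I}(x + \partial H_{e_i})$, so hyperconvexity forces $C$ to meet every such slice; one then shows the coordinate projection $\pi$ maps $\mathrm{apex}(C)$ \emph{onto} $\R^k$ and is \emph{injective} on $C$, forcing $k = n$ and $C = \R^n$. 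Producing your ``bad configuration'' in general would require essentially this same structural analysis, so nothing has been saved by the contrapositive phrasing.

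For the converse you explicitly name the crux --- verifying that the coupled retraction is $1$-Lipschitz --- and stop there; this is an admission of the gap, not a proof. Note also that you aim at a retraction of all of $l_\infty^n$ onto $C$, which is strictly harder than what the paper does: it first introduces the auxiliary cone $\mathrm{K}_C \supset C$, proves $\mathrm{K}_C$ is injective by a fixed-point iteration of coordinate clamps (Proposition~\ref{Prop:p1} and Lemma~\ref{Lem:l1} --- exactly the device that repairs the failure of naive clamping you point out for $C = \{\frac12 x_1 \le x_2 \le 2x_1\}$), and then only needs a $1$-Lipschitz retraction of $\mathrm{K}_C$ onto $C$. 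That retraction is built as a two-stage limit $r_2 \circ r_1$ of compositions of tangent-cone retractions along the translates $l\alpha q + C$ (your ``sweep''), and the reason nothing escapes is Lemma~\ref{Lem:l22}: every $p \in \mathrm{K}_C \setminus C$ lies in a polytope $F_p$, a finite intersection of balls centered in $C$ and disjoint from $\mathrm{apex}(C)$, which is invariant under \emph{every} $1$-Lipschitz retraction onto a superset of $C$. This invariance, plus the quantitative separation $d(F_p, \mathrm{apex}(C)) \ge \eta\Delta(0,1)$, is what lets one certify that the limit lands in $C$ and is $1$-Lipschitz --- i.e., it is the content you would still need to invent. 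As it stands, your proposal identifies the right landscape but proves neither direction beyond the part that is immediate from Theorem~\ref{Thm:t00001}.
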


It follows from Theorem \ref{Thm:t2} in the case where $\partial C \setminus \mathrm{apex}(C)=\emptyset$ or equivalently when $C$ is a half-space, that \eqref{eq:e00000} is an injectivity criterion for the half-spaces having the hyperplane $X$ as in \eqref{eq:e00000001} as boundary. For $p \in \partial C \setminus \mathrm{apex}(C) \neq \emptyset$, the dimension of $\mathrm{apex}(\mathrm{T}_pC)$ is strictly bigger than that of $\mathrm{apex}(C)$ and making repeated use of Theorem~\ref{Thm:t2} on tangent cones, one thus easily obtains:

\begin{Cor}\label{Cor:corollary1}
A convex polyhedron $P \subsetneq l_{\infty}^n$ with $\mathrm{int}(P) \neq \emptyset$ is injective if and only if for every $p \in \partial P$, the convex polyhedral cone $K := \mathrm{T}_pP - p$ satisfies $(ii)$ in Theorem \ref{Thm:t2}, which means that there is a facet $F \in \mathrm{Facets}^*([-1,1]^{n},K)$ such that $-F \notin \mathrm{Facets}^*([-1,1]^{n},K)$. 
\end{Cor}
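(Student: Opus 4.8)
The plan is to derive the statement from Theorem~\ref{Thm:t00001} and Theorem~\ref{Thm:t2} by an induction on the codimension of the apex, exploiting the fact that iterated tangent cones of $P$ are again tangent cones of $P$. Throughout I would use that injectivity is invariant under the isometries of $l_{\infty}^n$, in particular under translations, so that $\mathrm{T}_pP$ is injective if and only if the cone $K_p := \mathrm{T}_pP - p$ is; and that condition $(ii)$ of Theorem~\ref{Thm:t2} is a statement about a genuine cone through the origin, which is precisely why one passes from $\mathrm{T}_pP$ to $K_p$. Write $\cC(P) := \{ K_p : p \in \partial P \}$ for the set of these translated tangent cones. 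Each $K_p$ is a convex polyhedral cone with $\mathrm{int}(K_p) \neq \emptyset$ (because $\mathrm{int}(P) \neq \emptyset$) and with $K_p \subsetneq l_{\infty}^n$ (because $p \in \partial P$ admits a supporting hyperplane), so that Theorem~\ref{Thm:t2} applies to every member of $\cC(P)$. By Theorem~\ref{Thm:t00001}, $P$ is injective if and only if every $K \in \cC(P)$ is injective.

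The necessity of the facet condition is then immediate: if $P$ is injective, each $K \in \cC(P)$ is injective, and Theorem~\ref{Thm:t2} forces $K$ to satisfy $(ii)$. For the converse I would first record the structural fact that makes the induction run: for $p \in \partial P$ and $w \in \partial K_p$, one has
\[
\mathrm{T}_w K_p - w \in \cC(P).
\]
This holds because tangent cones are compatible along the face lattice, that is, $\mathrm{T}_{q}(\mathrm{T}_pP) - q = \mathrm{T}_{p'}P - p'$ whenever $q = p+w \in \partial \mathrm{T}_pP$ lies in the relative interior of (the cone corresponding to) a proper face $F$ of $P$ containing $p$, with $p' \in \mathrm{relint}(F) \subset \partial P$; translating back yields $\mathrm{T}_w K_p - w = \mathrm{T}_{q}(\mathrm{T}_pP) - q = K_{p'}$. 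Moreover, as already noted before the statement, when $w \notin \mathrm{apex}(K_p)$ the apex of $\mathrm{T}_w K_p$ has strictly larger dimension than that of $K_p$. Thus the assignment $w \mapsto \mathrm{T}_w K_p - w$ keeps us inside $\cC(P)$ while strictly increasing the apex dimension at every non-apex boundary point.

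Now assume every $K \in \cC(P)$ satisfies $(ii)$; I would prove by induction on $d := n - \mathrm{dim}(\mathrm{apex}(K))$ that every $K \in \cC(P)$ is injective, which by the reduction above completes the proof. In the base case $d = 1$ the cone $K$ has an $(n-1)$-dimensional apex and is therefore a half-space, so $\partial K \setminus \mathrm{apex}(K) = \emptyset$; condition $(i)$ of Theorem~\ref{Thm:t2} is vacuous and $(ii)$ holds by assumption, whence $K$ is injective. For the inductive step with $d \geq 2$ we have $\partial K \setminus \mathrm{apex}(K) \neq \emptyset$, and for each $w$ in this set the cone $\mathrm{T}_w K - w$ lies in $\cC(P)$ and has apex codimension strictly smaller than $d$; by the induction hypothesis it is injective, hence so is $\mathrm{T}_w K$. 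This verifies $(i)$ of Theorem~\ref{Thm:t2} for $K$, while $(ii)$ holds by assumption, so Theorem~\ref{Thm:t2} yields that $K$ is injective. Since $d$ ranges over the finite set $\{1,\dots,n\}$, the induction is well founded.

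The only genuine obstacle is the structural fact that $\cC(P)$ is closed under passing to translated tangent cones at boundary points, together with the strict increase of the apex dimension; once these are in place, the argument is purely a bookkeeping of Theorems~\ref{Thm:t00001} and~\ref{Thm:t2}. The strict increase of $\mathrm{dim}(\mathrm{apex})$ is granted by the remark preceding the corollary, so the point I expect to need the most care is the identity $\mathrm{T}_w K_p - w \in \cC(P)$, namely that a tangent cone of a tangent cone of $P$ is itself a tangent cone of $P$.
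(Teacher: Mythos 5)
Your proposal is correct and takes essentially the same route as the paper: the paper obtains the corollary by combining Theorem~\ref{Thm:t00001} with repeated application of Theorem~\ref{Thm:t2}, terminating the recursion precisely because the dimension of $\mathrm{apex}(\mathrm{T}_pC)$ strictly exceeds that of $\mathrm{apex}(C)$ for $p \in \partial C \setminus \mathrm{apex}(C)$. Your write-up simply makes explicit what the paper leaves implicit, namely the induction on the codimension of the apex and the structural fact that a translated tangent cone of a translated tangent cone of $P$ is again a translated tangent cone of $P$.
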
 

There are several equivalent characterizations of injective metric spaces and one of them is \textit{hyperconvexity} (cf. \cite{AroP}). We call a metric space $X$ \textit{hyperconvex} if for every family $\{(x_i,r_i) \}_{i \in I}$ in $X \times \R$ satisfying $r_i + r_j \ge d(x_i,x_j)$ for all $(i,j) \in I \times I$, one has $\bigcap_{i \in I} B(x_i,r_i) \neq \emptyset$ (with the convention that the intersection equals $X$ itself if $I = \emptyset$) where $B(x,r)$ will denote throughout the text, a closed ball in the contextually relevant metric (whereas open balls will be denoted by $U(x,r)$). Furthermore, if $Y \subset Z$ with $Z$ being injective and if there is a $1$-Lipschitz retraction $r \colon Z \to Y$ (i.e., $r \in \mathrm{Lip}_1(Z,Y)$ and $r|_Y = \mathrm{id}_Y$), then $Y$ is injective (this follows immediately from the definition of injectivity given above). The following two examples show that the characterization we are looking for requires more effort than one would think at first sight:

\begin{Expl}\label{Expl:expl1}
Consider the half-spaces 
\[
H := \{ x \in l_{\infty}^4 : x_1 \ge 0\}
\]
and 
\[
H' := \{ x \in l_{\infty}^4 : x_1 \le \frac{1}{3} (x_2 + x_3 + x_4) \}.
\]
Note that it is easy to see that both $H$ and $H'$ are injective by considering in each case the $1$-Lipschitz retraction given by mapping each point in the complement to the unique corresponding point on the boundary so that all coordinates but the first remain unchanged and then extending by the identity. Moreover, both $\partial H$ and $\partial H'$ are injective by \eqref{eq:e00000}. However, it is easy to see that $P := H \cap H' \subset l_{\infty}^4$ is not injective by considering the three points 
\[
\{p,p',p''\} := \{ (0,0,0,0), (0,0,-2,2),(0,-2,0,2)\} \subset \partial H \cap \partial H' \subset P,
\]
note that
\[
I := B(p,1) \cap B(p',1) \cap B(p'',1) = \{ (t,-1,-1,1) : t \in [-1,1] \},
\]
hence $I \cap P = \emptyset$. Thus $P$ is not hyperconvex and therefore not injective.
\end{Expl}
Next, we have:
\begin{Expl}\label{Expl:expl2}
Consider the injective half-space $H'$ defined above, let further $H'' := \{ x \in l_{\infty}^4 : x_1 \le 0\}$ and 
\[
P' := H' \cap H'' \subset l_{\infty}^4.
\]
Note that the face 
\[
F := \partial H' \cap \partial H'' \subset l_{\infty}^4
\]
of $P'$ is not injective since 
\[
F = \{ x \in l_{\infty}^4 : x_1 = 0, x_2 + x_3 + x_4 = 0 \}
\]
is isometric to \eqref{eq:e0000000} which is not injective as we already noted. Let us now however show that $P'$ is injective by defining an explicit $1$-Lipschitz retraction $r$ of $l_{\infty}^4$ onto $P'$. Let $\rho \in \mathrm{Lip}_1(l_\infty^{4}, \R)$ be the map 
\[
(x_1,\dots,x_4) \mapsto \frac{1}{3} (x_2 + x_3 + x_4).
\] 
Now, let $r \colon l_{\infty}^4 \to P'$ be given by
\[
(x_1,\dots,x_4) \mapsto (\min \{ x_1, 0, \rho(x) \},x_2,x_3,x_4 )
\]
and note that $r$ is the desired $1$-Lipschitz retraction.
\end{Expl}

Finally, we use Corollary \ref{Cor:corollary1} and a theorem of Shostak cf. \cite{Sho}, to prove:

\begin{Cor}\label{Cor:corollary2}
Consider two maps $f,g \colon \{1,\dots,m\} \to \{1,\dots,n\}$ and $a_i,b_i,c_i \in \R$ for $i \in \{1,\dots,m\}$ such that
\[
P := \bigcap_{i \in \{1,\dots,m\}} \left \{ x \in \R^{n} : a_i x_{f(i)} + b_i x_{g(i)} \ge c_i \right \} \neq \emptyset.
\]
Then, $P \subset l^n_{\infty}$ satisfies the assumptions of Corollary \ref{Cor:corollary1} and is therefore injective.
\end{Cor}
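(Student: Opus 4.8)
The plan is to deduce the claim from Corollary~\ref{Cor:corollary1}, reducing it to a single statement about homogeneous two-variable cones, and then to settle that statement with Shostak's loop-residue criterion.

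\emph{Reduction to the full-dimensional case.} If $\mathrm{int}(P)=\emptyset$, I would pass to the affine hull $A:=\mathrm{aff}(P)$, which is cut out by those defining inequalities that hold with equality on all of $P$, hence by a system of equalities $a_ix_{f(i)}+b_ix_{g(i)}=c_i$ with at most two variables each. Splitting the coordinates into the connected components of the graph having one edge per such equality, each component is either pinned to a constant or parametrized, up to a nonzero scalar, by one of its coordinates; taking in each component the coordinate of largest scaling as the parameter exhibits an isometry $A\cong l_\infty^d$ under which every remaining inequality of $P$ involves the parameters of at most two components and is therefore again a two-variable inequality. Since injectivity is an intrinsic property, it then suffices to prove that $P$, now full-dimensional in $l_\infty^d$, is injective; so from now on I assume $\mathrm{int}(P)\neq\emptyset$.

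\emph{Reduction to cones.} By Corollary~\ref{Cor:corollary1} it is enough to verify condition~$(ii)$ of Theorem~\ref{Thm:t2} for the cone $K:=\mathrm{T}_pP-p$ at every $p\in\partial P$. Writing $I(p)$ for the indices active at $p$ and using $\mathrm{T}_pP=\bigcup_m(p+m(P-p))$, one checks that
\[
K=\{v\in\R^n:\ a_iv_{f(i)}+b_iv_{g(i)}\ge 0\ \text{ for all }\ i\in I(p)\},
\]
so $K$ is again defined by two-variable homogeneous inequalities $\nu_i\cdot v\ge 0$ with each $\nu_i$ supported on at most two coordinates, and $K$ is proper and full-dimensional; equivalently $\operatorname{cone}\{\nu_i\}$ is nonzero and pointed. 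Thus everything reduces to: \emph{every such cone satisfies $(ii)$.} For a sign $s\in\{1,-1\}$ and a coordinate $j$, the cone $K$ meets the relative interior of the cube facet $\{x:x_j=s,\ |x_k|<1\ (k\neq j)\}$ exactly when $K$ contains a direction $u$ with $su_j>|u_k|$ for all $k\neq j$; so $(ii)$ asks for some facet met by $K$ whose opposite facet is not.

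\emph{Reformulating $(ii)$.} Each such reachability question is the feasibility of the inequalities $\nu_i\cdot x\ge 0$ together with $x_j=s$ and the box constraints $|x_k|<1$, a system with at most two variables per inequality, hence decidable through the loop residues of its graph by Shostak's theorem (cf.~\cite{Sho}). Dually, a separation argument shows that $K$ meets the $s$-facet in direction $j$ if and only if $\operatorname{cone}\{\nu_i\}$ contains no nonzero $z$ with $-sz_j\ge\sum_{k\neq j}|z_k|$; so, putting
\[
P_s:=\Big\{\,j:\ \operatorname{cone}\{\nu_i\}\ \text{contains some}\ z\neq 0\ \text{with}\ sz_j\ge\textstyle\sum_{k\neq j}|z_k|\,\Big\},
\]
condition~$(ii)$ is \emph{precisely} the assertion $P_{+}\neq P_{-}$, i.e.\ that some coordinate admits an $s$-dominant vector for one sign only.

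\emph{The crux.} Here the hypothesis is used decisively. Any generator $\nu_i=ae_p+be_q$ with $|a|\ge|b|$ satisfies $|a|\ge|b|=\sum_{k\neq p}|(\nu_i)_k|$, so it is $\sgn(a)$-dominant in coordinate $p$; hence $P_{+}\cup P_{-}\neq\emptyset$. The real content is the asymmetry $P_{+}\neq P_{-}$, and I expect this to be the main obstacle: a single coordinate may lie in both $P_{+}$ and $P_{-}$ (already in $l_\infty^3$), and the winning coordinate need not be the heavy coordinate of any one generator, so a naive convexity argument fails. The plan is to argue by contradiction. Two cases are immediate and serve as anchors: a coordinate occurring in no support lets one drop a dimension, and a coordinate carried with a single sign by the generators (in particular one supporting a one-variable generator) already lies in exactly one of $P_{\pm}$. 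In the remaining situation I would feed the loop-residue certificates that Shostak's theorem produces for the simultaneously infeasible reachability systems into one another, composing them along a cycle of the graph to manufacture a nonzero $z$ with both $z\in\operatorname{cone}\{\nu_i\}$ and $-z\in\operatorname{cone}\{\nu_i\}$. This contradicts the pointedness of $\operatorname{cone}\{\nu_i\}$, i.e.\ the full-dimensionality of $K$, and so forces $P_{+}\neq P_{-}$ and with it condition~$(ii)$.
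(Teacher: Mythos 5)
Your reductions are sound, and in one respect more careful than the paper itself: the passage to the affine hull when $\mathrm{int}(P)=\emptyset$ (with the observation that the component-wise parametrization keeps every inequality two-variable), the identification of $K=\mathrm{T}_pP-p$ as a cone cut out by the active homogeneous two-variable inequalities, and the dual reformulation of condition $(ii)$ of Theorem \ref{Thm:t2} as $P_{+}\neq P_{-}$ are all correct. (Indeed, $K$ meets $\mathrm{relint}$ of the $(\sigma,j)$-facet of $[-1,1]^n$ iff no nonzero $z\in\operatorname{cone}\{\nu_i\}$ satisfies $-\sigma z_j\ge\sum_{k\neq j}|z_k|$, and pointedness of $\operatorname{cone}\{\nu_i\}$ is equivalent to $\mathrm{int}(K)\neq\emptyset$, as you say. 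Your example-based warnings are also accurate: a coordinate can lie in $P_+\cap P_-$, and the statement ``$P_+=P_-$ implies non-pointed'' is genuinely special to two-variable systems --- it fails, e.g., for the cone of Example \ref{Expl:expl1}.)

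The genuine gap is the crux itself. Everything you have done up to that point is a restatement of what must be proven, namely: every full-dimensional cone $C\subsetneq l_\infty^n$ defined by homogeneous inequalities with at most two variables each satisfies $(ii)$ of Theorem \ref{Thm:t2}. Your plan --- ``feed the loop-residue certificates \dots into one another, composing them along a cycle of the graph to manufacture a nonzero $z$ with $\pm z\in\operatorname{cone}\{\nu_i\}$'' --- is not a defined operation, and nothing in the proposal explains which infeasible systems are being certified, how their residue loops live in a common graph, or why the composition closes up into a line in the dual cone rather than just another dominant vector. This is precisely where the paper spends its entire final section, and the mechanism there is quite different from a one-shot contradiction: it is an induction on $n$. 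One slices $C$ by a coordinate hyperplane $\partial H_{e_s}$ (chosen so that $C$ meets both $\pm F_s$, else one is done), checks that the projection $\widehat{\pi}_s(C\cap\partial H_{e_s})$ is again in the two-variable class $\mathcal{Z}_{n-1}$, applies the induction hypothesis to get a candidate pair $(t,\tau)$, and then --- this is the delicate part --- uses Shostak's Theorem \ref{Thm:theorem1} on a carefully built unsatisfiable system to show that any infeasible simple loop must use one edge involving $x_s$ and one involving $x_t$, whence splicing yields an admissible path in the original graph from $x_s$ to $x_t$ whose residue inequality $ax_s+bx_t\ge 0$ is valid on $C$ with $|a|<|b|$; this single two-variable dual vector is the certificate that $t\in P_\tau\setminus P_{-\tau}$. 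Without carrying out this induction (or an argument of comparable substance: in particular a proof that your certificate-composition terminates in a line of $\operatorname{cone}\{\nu_i\}$), the proposal establishes only the easy equivalences surrounding the theorem, not the theorem.
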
 

\section{Injective Linear Subspaces in $l_{\infty}^n$}\label{sec:s2}
Consider for $i \in I_n := \{1,\dots,n\}$ the linear isometry
\[
\mu_i \colon l_{\infty}^n \to l_{\infty}^n, \ (x_1,\dots,x_n) \mapsto (x_1,\dots,x_{i-1},-x_i,x_{i+1},\dots, x_n)
\]
and the $1$-Lipschitz linear map
\[
\pi_i \colon l_{\infty}^n \rightarrow \R, \ (x_1,\dots,x_n) \mapsto x_i.
\]
Moreover, let us denote by $\{e_1,\dots,e_n\}$ the standard basis of $\R^n$. Injective convex polyhedra were also studied in \cite{Moe}. Note that Theorem \ref{Thm:001} and \ref{Thm:t000000000} as well as Lemma~\ref{Lem:l12} in the next section already appear in \cite{Moe}. Our proof of Theorem \ref{Thm:001} is however more elementary.
\begin{Thm}\label{Thm:001}
Let $\emptyset \neq X \subset l_{\infty}^n$ be a linear subspace and let $k:= \mathrm{dim}(X)$. Then, the following are equivalent:
\begin{enumerate}[$(i)$]
\item $X$ is injective.
\item There is a subset $J \subset I_n$ with $|J|=k$ such that for any $i \in I_n \setminus J$ there exist real numbers $\{c(i,j)\}_{j \in J}$ such that $\sum_{j \in J} |c(i,j)| \le 1$ and such that
\[
X = \Biggl \{ x \in l_{\infty}^n : \forall i \in I_n \setminus J \ , \ x_i = \sum_{j \in J} c(i,j)x_j \Biggr \}.
\]
\end{enumerate}
\end{Thm}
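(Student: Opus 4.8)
The plan is to reduce both implications to Nachbin's theorem \cite[Theorem~3]{Nac}, which says that the $k$-dimensional normed space $X$ (equipped with the restriction of the $l_{\infty}$-metric) is injective if and only if it is linearly isometric to $l_{\infty}^{k}$, and then to read off the combinatorial data $J$ and $\{c(i,j)\}$ from such an isometry.

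For the direction $(ii) \Rightarrow (i)$ I would bypass Nachbin and exhibit an explicit $1$-Lipschitz retraction. Given $J$ and the coefficients $c(i,j)$, define $r \colon l_{\infty}^n \to \R^n$ by $r(x)_j := x_j$ for $j \in J$ and $r(x)_i := \sum_{j \in J} c(i,j) x_j$ for $i \in I_n \setminus J$. Each component of $r$ is either a coordinate projection $\pi_j$ or a linear functional whose coefficients have $l^1$-norm at most $1$, so $r \in \Lip_1(l_{\infty}^n, l_{\infty}^n)$; one checks directly from the defining equations that $r$ maps $l_\infty^n$ into $X$ and restricts to the identity on $X$. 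Since $l_{\infty}^n$ is injective (e.g.\ by \cite[Theorem~3]{Nac}) and $X$ is a $1$-Lipschitz retract of it, $X$ is injective by the retraction principle recalled in the introduction.

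The substance lies in $(i) \Rightarrow (ii)$. By Nachbin's theorem there is a linear isometry $T \colon l_{\infty}^k \to X \subset l_{\infty}^n$; write $T(y)_i = \sum_{l=1}^{k} a_{il} y_l$, so that $T$ is encoded by an $n \times k$ matrix $A = (a_{il})$. Because $T$ is $1$-Lipschitz, each $\pi_i \circ T \colon l_{\infty}^k \to \R$ has operator norm $\sum_{l} |a_{il}| \le 1$, i.e.\ every row of $A$ has $l^1$-norm at most $1$. The key observation is to evaluate the isometry on the standard basis $\{e_1,\dots,e_k\}$ of $l_\infty^k$: from $\|T e_l\|_{\infty} = \|e_l\|_{\infty} = 1$ one gets $\max_i |a_{il}| = 1$ for each column $l$, and combined with the row bound $\sum_{l'} |a_{il'}| \le 1$ this forces the maximizing row to equal $\varepsilon_l e_l$ for some sign $\varepsilon_l \in \{-1,+1\}$. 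Thus for every $l$ there is an index $j_l \in I_n$ with row $j_l$ of $A$ equal to $\varepsilon_l e_l$, and I set $J := \{j_1, \dots, j_k\}$.

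Finally I would read off the representation. For $x = T(y) \in X$ one has $x_{j_l} = \varepsilon_l y_l$, hence $y_l = \varepsilon_l x_{j_l}$ and, for $i \in I_n \setminus J$, one computes $x_i = \sum_l a_{il} y_l = \sum_{l} a_{il}\varepsilon_l x_{j_l} = \sum_{j \in J} c(i,j) x_j$ with $c(i,j_l) := a_{il}\varepsilon_l$; these coefficients satisfy $\sum_{j \in J} |c(i,j)| = \sum_l |a_{il}| \le 1$. This shows $X$ is contained in the graph on the right-hand side of $(ii)$, and since both are $k$-dimensional linear subspaces they coincide. The step I expect to be the main obstacle is the extraction of $J$: one must verify that the $k$ signed basis rows forced by the columns of $A$ occur in $k$ \emph{distinct} coordinates (two rows of the form $\varepsilon_l e_l$ and $\varepsilon_{l'} e_{l'}$ with $l \ne l'$ cannot coincide), which is precisely what guarantees $|J| = k$ and makes $(x_j)_{j \in J}$ a genuine parametrization of $X$.
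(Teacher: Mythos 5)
Your proof is correct and follows essentially the same route as the paper: both rest on Nachbin's theorem producing a linear isometry $l_\infty^k \to X$, from whose matrix one extracts $k$ rows forming a signed identity (giving $J$, with distinctness of the indices automatic) and reads off the remaining rows as coefficients $c(i,j)$ with $l^1$-norm at most $1$. The only cosmetic differences are that you obtain the zero pattern of the distinguished rows from the row-wise operator-norm bound combined with $\|Te_l\|_\infty=1$, where the paper evaluates the isometry at the vectors $\sigma e_j + \tau e_l$, and that for $(ii)\Rightarrow(i)$ you build a $1$-Lipschitz retraction of $l_\infty^n$ onto $X$ where the paper instead exhibits an isometry between $X$ and $l_\infty^k$; both variants are equally valid.
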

\begin{proof}
Assume first that $(ii)$ holds. Assume for simplicity that $J=\{1,\dots,k\}$. Let us define the map $L \colon l_\infty^k \to l_\infty^n$ such that for any $(y_1,\dots,y_k) \in l_\infty^k$,
\[
L(y) := \Biggl( y_1, \dots, y_k, \sum_{j = 1}^k c(k+1,j)y_j,\dots, \sum_{j = 1}^k c(n,j)y_j \Biggr).
\]
It is then easy to see that $L$ is an isometric embedding with $L(l_\infty^k)=X$. It follows that $X$ and $ l_\infty^k$ are isometric and thus $X$ is injective.

Assume now that $(i)$ holds, there consequently exists a linear isometric embedding $L \colon l_\infty^k \to X \subset l_\infty^n$  (see the Introduction). In particular,
\begin{equation}\label{eq:e001}
\left\|  L(e_j)  \right\|_{\infty} = 1
\end{equation}
and 
\begin{equation}\label{eq:e002}
\left\|  L(\sigma e_j + \tau e_l )  \right\|_{\infty} = 1
\end{equation}
for $(j,l) \in I_k \times I_k$ with $j \neq l$ (where $I_k:=\{1,\dots,k\}$) and $\sigma,\tau \in \{\pm 1\}$. Now, \eqref{eq:e001} implies for $j \in I_k$ the existence of some $f(j) \in I_n$ such that $|(\pi_{f(j)} \circ L)(e_j)| =1$; replacing $L$ by $L \circ \mu_j$ if necessary, we can assume without loss of generality that 
\begin{equation}\label{eq:e003}
(\pi_{f(j)} \circ L)(e_j) = 1
\end{equation}
for any $j \in I_k$. Therefore, \eqref{eq:e003} together with \eqref{eq:e002} imply that $(\pi_{f(j)} \circ L)(e_l) = 0$ for $(j,l) \in I_k \times I_k$ with $j \neq l$ and thus $f$ is injective. We summarize by writing $(\pi_{f(j)} \circ L )(e_l) = \delta_{jl}$. Now, we can assume for simplicity that $f(j) = j$ for any $j \in I_k$ hence in particular $J := f(I_k) = \{1,\dots,k\}$ and
\begin{equation}\label{eq:e004}
(\pi_{j} \circ L )(e_l) = \delta_{jl}.
\end{equation}
It follows that there are $c(k+1,j), \dots, c(n,j) \in \R$ such that
\[
L(e_j) = \bigl(0,\dots,0,1,0,\dots,0,c(k+1,j),\dots,c(n,j) \bigr),
\]
where the first $k$ entries of $L(e_j)$ are zero except the $j$-th one. For any $(\sigma_1,\dots,\sigma_k) \in \{ \pm 1 \}^{k}$, one has by linearity
\[
\left\|  \sum_{j=1}^k \sigma_j L(e_j)   \right\|_{\infty} = \left\|  \sum_{j=1}^k \sigma_j e_j \right\|_{\infty} = 1.
\]
Inserting successively appropriate values for $(\sigma_1,\dots,\sigma_k)$ in the above equality, one obtains for any $i \in I_n \setminus J = \{k+1,\dots,n\}$,
\[
\sum_{j=1}^k \left| c(i,j) \right| \le 1.
\]
Since $X=L(l_\infty^k)$, there are for any $x \in X$ real numbers $c_1,\dots,c_k \in \R$ such that $x = \sum_{l=1}^k c_l L(e_l)$. For any $j \in I_k$, it follows from \eqref{eq:e004} that
\[
x_j = \pi_j(x) = \sum_{l=1}^k c_l (\pi_j \circ L)(e_l) = \sum_{l=1}^k c_l \delta_{jl} = c_j.
\]
Hence finally
\[
x = \sum_{j=1}^k x_j L(e_j) = \Bigl(x_1,\dots,x_k,\sum_{j=1}^k c(k+1,j) x_j ,\dots,\sum_{j=1}^k c(n,j) x_j \Bigr).
\]
This proves that $(ii)$ holds and concludes the proof.
\end{proof}

The next theorem is an immediate consequence of Theorem \ref{Thm:001}:

\begin{Thm}\label{Thm:t000000000}
Let $\nu \in \R^n \setminus \{ 0\}$. The hyperplane $X = \{ x \in \R^n : x \cdot \nu = 0 \} \subset l_{\infty}^n$ is injective if and only if $\left\| \nu \right\|_{1} \le 2 \left\| \nu \right\|_{\infty} $.
\end{Thm}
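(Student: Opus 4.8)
The plan is to read the statement directly off the combinatorial criterion of Theorem~\ref{Thm:001}, specialized to the codimension-one case $k = \dim(X) = n-1$. In this case any admissible index set $J \subset I_n$ from condition~$(ii)$ has $|J| = n-1$, so its complement $I_n \setminus J$ consists of a single index, say $i_0$. Thus Theorem~\ref{Thm:001} tells us that $X$ is injective if and only if there exist an index $i_0 \in I_n$ and real numbers $\{c(i_0,j)\}_{j \neq i_0}$ with $\sum_{j \neq i_0} |c(i_0,j)| \le 1$ such that
\[
X = \Bigl\{ x \in l_\infty^n : x_{i_0} = \sum_{j \neq i_0} c(i_0,j) x_j \Bigr\}.
\]

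The main step is then to match this representation with the given one $X = \{ x : x \cdot \nu = 0 \}$. Two homogeneous linear equations cut out the same hyperplane through the origin precisely when their coefficient vectors are proportional, so the displayed equation describes $X$ exactly when $e_{i_0} - \sum_{j \neq i_0} c(i_0,j) e_j$ is a nonzero scalar multiple of $\nu$. Comparing the $i_0$-th coordinates forces $\nu_{i_0} \neq 0$, and then necessarily $c(i_0,j) = -\nu_j / \nu_{i_0}$ for every $j \neq i_0$; conversely, whenever $\nu_{i_0} \neq 0$ these coefficients do reproduce $X$. Hence a valid representation indexed by $i_0$ exists if and only if $\nu_{i_0} \neq 0$ and
\[
\sum_{j \neq i_0} \left| \frac{\nu_j}{\nu_{i_0}} \right| = \frac{\left\| \nu \right\|_{1} - |\nu_{i_0}|}{|\nu_{i_0}|} \le 1,
\]
which is equivalent to $\left\| \nu \right\|_{1} \le 2 |\nu_{i_0}|$ (an inequality that, since $\nu \neq 0$, already forces $\nu_{i_0} \neq 0$).

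It remains only to combine over the choices of $i_0$. Since the left-hand side $\left\| \nu \right\|_{1}$ is fixed, some index $i_0$ satisfying $\left\| \nu \right\|_{1} \le 2|\nu_{i_0}|$ exists if and only if the inequality holds for the index maximizing $|\nu_{i_0}|$, that is, if and only if $\left\| \nu \right\|_{1} \le 2 \max_{i \in I_n} |\nu_i| = 2 \left\| \nu \right\|_{\infty}$. This is exactly the asserted criterion. No step beyond this elementary bookkeeping is required; the only point demanding a moment's care is the proportionality argument that forces $\nu_{i_0} \neq 0$, legitimizing the division by $\nu_{i_0}$ and pinning down the coefficients $c(i_0,j)$.
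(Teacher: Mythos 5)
Your proof is correct and follows essentially the same route as the paper: both read the statement directly off Theorem~\ref{Thm:001} in the codimension-one case, identifying the coefficients $c(i_0,j)$ with $-\nu_j/\nu_{i_0}$ (the paper does this by normalizing $\left\| \nu \right\|_{\infty}=1$ and adjusting signs, you by an explicit proportionality argument). Your handling of the equivalence over all choices of $i_0$ via the maximizing index is exactly the bookkeeping implicit in the paper's proof, so there is nothing to add.
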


\begin{proof}
Assume first that $X$ is injective. By Theorem \ref{Thm:001}, there is some $i \in I$ such that 
\[
X = \Biggl \{ x \in l_{\infty}^n :  -x_i + \sum_{j \in I \setminus \{i\}} c(i,j)x_j = 0 \Biggr \}.
\]
with $\sum_{j \in I \setminus \{i\}} |c(i,j)| \le 1$. Define now $\nu$ so that $\nu_j:= c(i,j)$ if $j \neq i$ and $\nu_i:=-1$. Note that $\nu$ is a normal vector of $X$ and satisfies $\left\| \nu \right\|_{1} \le 2 \left\| \nu \right\|_{\infty} $.

For the other implication, let $\nu$ a normal vector of $X$ satisfying $\left\| \nu \right\|_{1} \le 2 \left\| \nu \right\|_{\infty} $ and assume without loss of generality that $\left\| \nu \right\|_{\infty} = 1$; hence, $\left\| \nu \right\|_1 \le 2$. There is  $i \in I$ such that $|\nu_i| = 1$ and assume additionally without loss of generality that $\nu_i = -1$. Thus $ \sum_{j \in I \setminus \{i\}} |\nu_j| \le 1$ and $x \cdot \nu = -x_i + \sum_{j \in I \setminus \{i\}} \nu_j x_j$, hence we can apply Theorem \ref{Thm:001} to
\[
X = \Biggl \{ x \in l_{\infty}^n :  -x_i + \sum_{j \in I \setminus \{i\}} \nu_j x_j = 0 \Biggr \},
\]
to obtain that $X$ is injective. This concludes the proof of the theorem.
\end{proof}

\section{Tangent cones of Injective Convex Polyhedra in $l_{\infty}^n$}
Throughout the text, we shall call a sequence of sets $(X_m)_{m \in \mathbb{N}}$ increasing if and only if $X_m \subset X_{m+1}$ for $m \in \mathbb{N}$ whereas it will be called decreasing if the reverse inclusions hold. 

\begin{Lem}\label{Lem:l12}
Let $\emptyset \neq S \subset  l_{\infty}^n$ be a closed subset. Then, the following are equivalent:
\begin{enumerate}[(i)]
\item  $S$ is injective.
\item There is $x \in S$ such that $S \cap B(x,r) $ is injective for any $r \in (0,\infty)$.
\item There is an increasing sequence $(X_m)_{m \in \mathbb{N}}$ of injective subsets of $S$ such that $S = \bigcup_m X_m$.
 \end{enumerate}
\end{Lem}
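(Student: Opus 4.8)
The plan is to establish the cyclic chain of implications $(i) \Rightarrow (ii) \Rightarrow (iii) \Rightarrow (i)$, using throughout the equivalence between injectivity and hyperconvexity recalled in the introduction, together with the completeness of $S$ (a closed subset of the complete space $l_\infty^n$).

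For $(i) \Rightarrow (ii)$ I would first observe that, for $x \in S$ and $r > 0$, the set $S \cap B(x,r)$ is exactly the closed ball of radius $r$ about $x$ in the metric induced on $S$. The key sub-claim is that a single closed ball in a hyperconvex space is again hyperconvex: given a family $\{(y_k, s_k)\}$ in $(S \cap B(x,r)) \times \R$ with $s_k + s_l \ge d(y_k, y_l)$, I adjoin the pair $(x,r)$ to the family and check the pairwise condition for the enlarged family; the only new inequalities are $s_k + r \ge d(y_k, x)$, which hold since $y_k \in B(x,r)$ and $s_k \ge 0$. Hyperconvexity of $S$ then yields a common point, which automatically lies in $S \cap B(x,r)$, so $S \cap B(x,r)$ is hyperconvex, hence injective. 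The implication $(ii) \Rightarrow (iii)$ is then immediate: setting $X_m := S \cap B(x,m)$ gives an increasing sequence of injective sets whose union is $S$, since every point of $S$ lies at finite $l_\infty$-distance from $x$.

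The heart of the argument, and the step I expect to be the main obstacle, is $(iii) \Rightarrow (i)$, where I would verify hyperconvexity of $S$ directly. Let $\{(y_i, r_i)\}_{i \in I}$ be a family in $S \times \R$ with $r_i + r_j \ge d(y_i, y_j)$ for all $i,j$ (so each $r_i \ge 0$). For a finite subfamily the points $y_i$ all lie in a single $X_m$ by the increasing property, and since $X_m$ is hyperconvex the corresponding balls meet inside $X_m \subset S$; thus every finite subfamily of the closed sets $C_i := S \cap B(y_i, r_i)$ has nonempty intersection. To pass to the full family I would exploit finite-dimensionality: in $l_\infty^n$ every closed ball is compact, so fixing $i_0 \in I$ the sets $C_i \cap C_{i_0}$ are closed subsets of the compact set $C_{i_0}$ enjoying the finite intersection property, whence $\bigcap_{i} C_i \neq \emptyset$. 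This produces the required common point and shows that $S$ is hyperconvex, hence injective.

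The crucial, and only genuinely delicate, point is this finite-to-infinite passage, which rests essentially on the compactness of balls in $l_\infty^n$: without finite-dimensionality, the finite intersection property would not by itself force a common point. Everything else reduces to bookkeeping with the pairwise Gromov-type inequalities and the elementary observation that radii in such families are nonnegative.
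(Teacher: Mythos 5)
Your proposal is correct and takes essentially the same route as the paper: the decisive implication $(iii) \Rightarrow (i)$ rests in both cases on hyperconvexity of the pieces $X_m$ applied to subfamilies together with compactness of closed bounded sets in $l_{\infty}^n$ --- the paper picks points $y_m$ in the nested intersections $X_{m+m_0} \cap \bigcap_{\alpha \in A_m} B(x_\alpha,r_\alpha)$ and extracts a convergent subsequence inside the compact set $S \cap B(x_\gamma,r_\gamma)$, while you encode the same compactness through the finite intersection property inside one compact set $C_{i_0}$. Your explicit verification of $(i) \Rightarrow (ii)$ by adjoining the pair $(x,r)$ to the given family is the standard argument that closed balls in hyperconvex spaces are hyperconvex, a step the paper treats as immediate from the definitions.
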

\begin{proof}
We shall only prove that $(iii)$ implies $(i)$ since the other implications follow immediately from the definitions. In order to do so, we shall prove that $(iii)$ implies that $S$ is hyperconvex. Consider a family $\{ (x_{\alpha},r_{\alpha} ) \}_{\alpha \in A}$ in $S \times \R$ such that $r_{\alpha} + r_{\beta} \ge \left\| x_{\alpha} - x_{\beta} \right\|_{\infty}$ for any $(\alpha,\beta ) \subset A \times A$. Pick $\gamma \in A$ arbitrarily and let $m_0 \in \mathbb{N}$ be such that $x_\gamma \in  X_{m_0}$. Consider a sequence $(A_m,y_m)_{m \in \mathbb{N}}$ such that
\[
A_m := \bigl \{ \alpha \in A : x_{\alpha} \in X_{m+m_0} \bigr \} 
\]
and
\[ 
y_m \in S \cap \bigcap_{\alpha \in A_m} B(x_{\alpha},r_{\alpha}),
\] 
noting that $X_{m+m_0} \cap \bigcap_{\alpha \in A_m} B(x_{\alpha},r_{\alpha})\neq \emptyset$ hence $S \cap \bigcap_{\alpha \in A_m} B(x_{\alpha},r_{\alpha})\neq \emptyset$. Since $S$ is closed and $(y_m) \subset S \cap B(x_{\gamma},r_{\gamma})$, it follows that there is a convergent subsequence $(y_{m_l})$ such that $y_{m_l} \to y \in S \cap B(x_{\gamma},r_{\gamma})$. Thus, $y \in S \cap \bigcap_{\alpha \in A} B(x_{\alpha},r_{\alpha})$. %[indeed, for any $\beta \in A$ there is $M$ such that $x_{\beta} \in X_M$. Hence, $y_m \in B(x_{\beta},r_{\beta})$ for $m \ge M - m_0$. Since  $B(x_{\beta},r_{\beta})$ is closed, it follows that $y \in B(x_{\beta},r_{\beta})$].
This proves that $S$ is hyperconvex and finishes the proof of the lemma.
\end{proof}

We shall make use in the proof of Theorem~\ref{Thm:t00001} of the following (cf.~\cite{Zie}):

\begin{Thm}\label{Thm:t001}
$S \subset \R^n$ is a convex polyhedron if and only if there is a convex polytope $Q$ and a convex polyhedral cone $C$ such that 
\[
S = Q + C.
\]
\end{Thm}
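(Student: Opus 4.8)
The plan is to reduce the statement to its homogeneous (conic) analogue and to derive the latter from Fourier--Motzkin elimination. The single fact I would isolate first is the \emph{projection lemma}: the image of a convex polyhedron under a linear map is again a convex polyhedron. This is exactly Fourier--Motzkin elimination: given $S = \{x : Ax \le b\}$ and the projection forgetting one coordinate, one partitions the rows of the system according to the sign of the coefficient of that coordinate and replaces each pair of oppositely signed inequalities by the positive combination eliminating the variable; the resulting finite system of inequalities describes the projection. Iterating eliminates any set of variables, and precomposing with a change of coordinates handles an arbitrary linear map. A product of polyhedra is trivially a polyhedron (stack the two inequality systems), so linear images of products are covered as well.

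From the projection lemma both directions of the \emph{conic Minkowski--Weyl theorem} follow. For ``finitely generated $\Rightarrow$ polyhedral'', a cone $\operatorname{cone}\{c_1,\dots,c_s\}$ is the image of the polyhedral cone $\{\lam \in \R^s : \lam \ge 0\}$ under $\lam \mapsto \sum_j \lam_j c_j$, hence a polyhedron by the projection lemma, and being a cone it is cut out by half-spaces through the origin. For the converse ``polyhedral $\Rightarrow$ finitely generated'', I would pass to the polar cone: if $C = \{x : \la a_i, x\ra \le 0,\ i \le m\}$, then Farkas' lemma identifies the polar $C^{\circ}$ with $\operatorname{cone}\{a_1,\dots,a_m\}$, which is finitely generated, hence polyhedral by the first direction. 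A second application of Farkas to this polyhedral description of $C^{\circ}$, together with the bipolar identity $C = C^{\circ\circ}$ valid for closed convex cones, then exhibits $C$ itself as finitely generated.

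With the conic theorem in hand I would homogenize. Given a nonempty polyhedron $S = \{x : Ax \le b\} \subset \R^n$, form the cone $\tilde C := \{(x,t) \in \R^{n+1} : Ax \le tb,\ t \ge 0\}$, which is polyhedral, hence finitely generated by the conic theorem. Normalizing each generator so that its last coordinate lies in $\{0,1\}$ splits the generators into points $(q_1,1),\dots,(q_r,1)$ and directions $(c_1,0),\dots,(c_s,0)$. Slicing at $t=1$, where $(x,1) \in \tilde C$ holds exactly when $x \in S$, a nonnegative combination equal to $(x,1)$ must have its point-coefficients summing to $1$; this yields $x \in \conv\{q_i\} + \operatorname{cone}\{c_j\}$, so $S = Q + C$ with $Q := \conv\{q_i\}$ a polytope and $C := \operatorname{cone}\{c_j\}$ a polyhedral cone (both polyhedral again by the conic theorem and its bounded analogue for $\conv$). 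The empty case is trivial, taking $Q = \es$. The reverse implication is immediate from the projection lemma, since $Q + C$ is the image of the polyhedron $Q \times C$ under the addition map $(u,v) \mapsto u + v$.

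The main obstacle is the conic Minkowski--Weyl theorem, and within it the passage from an inequality description to a generator description: the projection lemma delivers ``finitely generated $\Rightarrow$ polyhedral'' directly, but the converse genuinely needs the Farkas/polar-duality input to certify that no extreme rays are overlooked. Everything else---the Fourier--Motzkin bookkeeping, the homogenization, and the $t=1$ slice---is routine once that duality step is secured.
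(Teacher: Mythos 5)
Your proposal is correct, but there is nothing in the paper to compare it against line by line: the paper does not prove this statement at all, it simply quotes it with a reference to Ziegler's \emph{Lectures on Polytopes}. Your argument is essentially the standard proof from that source in its overall architecture --- Fourier--Motzkin elimination as the projection lemma, the conic Minkowski--Weyl theorem, homogenization to $\tilde C=\{(x,t):Ax\le tb,\ t\ge 0\}$ and slicing at $t=1$, with the reverse implication via the addition map on $Q\times C$. The one step where you genuinely diverge from the cited proof is the direction ``polyhedral cone $\Rightarrow$ finitely generated'': you obtain it from Farkas' lemma applied twice together with the bipolar identity $C=C^{\circ\circ}$, whereas Ziegler stays polarity-free and handles this direction by a dual form of the elimination step (intersecting a V-cone with a half-space yields a V-cone, starting from $\R^n$). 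Both routes are sound; yours imports Farkas and a separation-type fact (the bipolar theorem for closed convex cones), which is economical but not self-contained, while the elimination route is longer but needs no duality input --- and note that Farkas itself is derivable from your projection lemma, so there is no circularity. Two small points worth making explicit: in the $t=1$ slice, when $S\neq\es$ at least one generator of $\tilde C$ must have positive last coordinate (otherwise every nonnegative combination has $t=0$), so $Q\neq\es$ as required; and your treatment of the empty case presumes that $\es$ counts as a polytope, which is harmless here since the paper only applies the theorem to nonempty polyhedra.
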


For an $n$-dimensional polyhedron $P$ and for $k \in \{0,1,\dots,n-1\}$, let $\mathrm{Faces}_k(P)$ denote the set of $k$-dimensional faces of $P$ and let $\partial^k P$ be the union of all elements of $\mathrm{Faces}_k(P)$. We shall use the notation $d(A,B) := \inf_{ (a,b) \in A \times B} \left\| a -b \right\|_{\infty}$ for two subsets $\emptyset \neq A,B \subset l_{\infty}^n$. The open $\delta$-neighborhood $\bigcup_{a \in A} U(a,\delta)$ of $A$ will be denoted by $N(A,\delta)$. 

\begin{proof}[Proof of Theorem~\ref{Thm:t00001}]
By Lemma \ref{Lem:l12} and by definition of $\mathrm{T}_pP$ it immediately follows that $(i)$ implies $(ii)$. Assume now that $(ii)$ holds. Let us consider an enumeration $\{F_j\}_{j \in \{1,\dots,N\}}$ of $\mathrm{Faces}(P) \setminus \{P\}$. For $j \in \{1,\dots,N\}$ we consider an arbitrary point $p^j \in \mathrm{relint}(F_j)$ and a corresponding $1$-Lipschitz retraction $\rho_j \colon l_{\infty}^n \to \mathrm{T}_{p^j}P$. For $p \in \partial P$, let
\[
\eps_p := \sup \{  \eps \in (0,\infty]: U(p,\eps) \cap \mathrm{T}_pP = U(p,\eps) \cap P \}.
\]
Note that if $\eps_p = \infty$ for some $p$, then $P = \mathrm{T}_pP$ and thus $P$ is injective. Otherwise, we proceed inductively to show that there is a $\delta > 0$ such that 
\[
P \cup N(\partial P,\delta) \subset P \cup \bigcup_{p \in \partial P} U(p,\eps_p).
\]
Suppose $F \in \mathrm{Faces}_k(P)\setminus \{P\}$, for $k = 0$ we set $c(F) := F$ and for $k \ge 1$:
\[
c(F)
:= F \setminus N(\partial^0 P \cup \partial^1 P \cup \cdots \cup \partial^{k-1} P, \delta^{(k)}/2).
%= F \setminus N(\mathrm{rel} \partial F,\delta^{(k)}/2) .
\]
Moreover,
\[
\eps^{(k+1)} := \min_{F \in \mathrm{Faces}_k(P)} \Biggl[ \frac{1}{2} \min_{ \mathrm{Faces}(P) \ni F' \nsupseteq F } d(c(F),F') \Biggr].
\]
By Theorem \ref{Thm:t001} it is easy to see that $d(P',P'') > 0$ for any two disjoint convex polyhedra $\emptyset \neq P',P'' \subset l_{\infty}^n$ and thus $\eps^{(k+1)} > 0$. Furthermore, let $\delta^{(0)}:=\eps^{(1)}$ and
\[
\delta^{(k+1)} :=\min \left \{ \eps^{(k+1)}, \frac{\delta^{(k)}}{2}  \right \}.
\]
Moreover, we set $A^0 := \partial^0 P$ and for $k \ge 1$:
\[
A^k := \bigcup_{F \in \mathrm{Faces}_k(P)} c(F) = \partial^k P \setminus N(\partial^0 P \cup \partial^1 P \cup \cdots \cup \partial^{k-1} P,\delta^{(k)}/2).
\]
It follows by construction that for any $p \in A^k$ and any $F \in \mathrm{Faces}(P)$, one has $U(p,\delta^{(k+1)}) \cap F \neq \emptyset$ if and only if $p \in F$. Hence $U(p,\delta^{(k+1)}) \cap \mathrm{T}_pP = U(p,\delta^{(k+1)}) \cap P$ for any $p \in A^k$. It follows by induction that
\[
\bigcup_{p \in \partial P} U(p,\delta^{(n)}) = N(\partial^{n-1}P,\delta^{(n)}) \subset \bigcup_{k=0}^{n-1} N(A^k,\delta^{(k+1)}).
\]
This shows that $\delta := \delta^{(n)} > 0$ satisfies $P \cup N(\partial P,\delta) \subset P \cup \bigcup_{p \in \partial P} U(p,\eps_p)$. It is now easy to see that we obtain a $1$-Lipschitz retraction $\rho \colon N(P,\delta) \to P$ by setting $\rho := \widetilde{\rho}|_{N(P,\delta)}$ where $\widetilde{\rho} := \rho_1 \circ \cdots \circ \rho_N$. By Theorem \ref{Thm:t001}, there is a convex polytope $Q$ and a polyhedral cone $C$ such that $P = Q + C$. We can assume without loss of generality that $0 \in \mathrm{int}(Q)$. We can set $\kappa := 1 + \frac{\delta}{2 \diam(Q)}$ and since $\kappa P = \kappa Q + C$ it follows that $\kappa P \subset N(P,\delta)$. 
By iteration, we obtain a sequence $\{(\rho^m, P^m)\}_{m \in \mathbb{N} }$ of rescalings 
$P^m := \kappa^m P$ of $P$ and corresponding $1$-Lipschitz retractions $\rho^m \colon P^{m} \to P^{m-1}$ by setting $\rho^m(\kappa x) := \kappa \rho^{m-1}(x)$ for $m \ge 2$ and $\rho^{1}:=\rho|_{\kappa P}$. Finally, we can define the $1$-Lipschitz retraction $r \colon l_{\infty}^n \to P$ as an inverse limit map for the system $\{(\rho^m, P^m)\}_{m \in \mathbb{N} }$, that is $r(x) := (\rho^1 \circ \cdots \circ \rho^m)(x)$ where $m$ is the smallest natural such that $x \in P^{m}$. It follows that $P$ is injective.
\end{proof}

Let us consider a simple example to show that it is necessary in the above proof to argue locally before extending to increasing rescalings.

\begin{Expl}
Consider $Q:= [-2,0] \times [-2,0] = B((-1,-1),1) \subset l_{\infty}^2$. We enumerate the tangent cones of $Q$ as follows; for $k \in \{1,2,3,4\}$:
\[
C_{2k-1} := \mathrm{T}_{p_k}Q \text{ where } (p_1,\dots,p_4):=((-2,-2),(-2,0),(0,0),(0,-2)),
\]
\[
C_{2k} := \mathrm{T}_{q_k}Q \text{ where } (q_1,\dots,q_4)=((-2,-1),(-1,0),(0,-1),(-1,-2)).
\]
Consider corresponding $1$-Lipschitz retractions such that
% most aesthetic order to present the retractions ------ is it still OK ????
\begin{align*}
&\rho_2(x_1,x_2):= (-x_1-4,x_2)\text{ if } x_1 < -2 \text{ and } \rho_6(x_1,x_2):= (-x_1,x_2) \text{ if } x_1 > 0, \\
&\rho_8(x_1,x_2):= (x_1,-x_2-4) \text{ if } x_2 < -2 \text{ and } \rho_4(x_1,x_2):= (x_1,-x_2) \text{ if } x_2 >0
\end{align*}
and extend $\rho_2$, $\rho_4$, $\rho_6$ and $\rho_8$ by the identity. Finally, we set for odd indices: $\rho_{1}:= \rho_{2} \circ \rho_{8}$ and $\rho_{2k-1}:= \rho_{2k} \circ \rho_{2k-2}$ for $k \neq 1$. It is then easy to see that $(\rho_8 \circ \cdots \circ \rho_1)((-10,-10)) = (-6,2) \notin Q$.
\end{Expl}

\begin{Rem}
Note that it is enough to assume that the minimal (for the inclusion) tangent cones of $P$ are injective. Hence, letting $P$ be a convex polyhedron with non-empty interior, the following are equivalent:
\begin{enumerate}[$(i)$]
\item $P$ is injective.
\item All minimal tangent cones of $P$ are injective.
\end{enumerate}
\end{Rem}

%M.P. 07-805-393

\section{Systems of Inequalities}
For $i \in I_n := \{1,\dots,n\}$ let $\widehat{\pi}_i \in \mathrm{Lip}_1(l_\infty^{n},l_\infty^{n-1})$ denote the map
\[
(x_1,\dots,x_n) \mapsto (x_1,\dots,\widehat{x}_i,\dots,x_n) 
\]
and recall that $\pi_i \in \mathrm{Lip}_1(l_\infty^{n},\R)$ denotes the map $(x_1,\dots,x_n) \mapsto x_i$. We start with a proposition very similar to an idea originally from \cite{Des}.

\begin{Prop}\label{Prop:p1}
Let $I \subset I_n$, $\mathfrak{R} := \{\underline{r}_{i} : i \in I\} \cup \{ \overline{r}_{i} : i \in I \} \subset \mathrm{Lip}_1(l_\infty^{n-1}, \R)$ and 
\[
Q := \Bigl \{ x \in l_\infty^{n} : \forall i \in I , \ (\underline{r}_i \circ \widehat{\pi}_i)(x)  \le x_i \le (\overline{r}_i \circ \widehat{\pi}_i)(x) \Bigr \}.
\]
Assume that:
\begin{enumerate}[$(i)$]
\item $Q \neq \emptyset$;
\item for any $i \in I$, $\underline{r}_i \le \overline{r}_{i}$.
\end{enumerate}
It follows that $Q$ is injective.
\end{Prop}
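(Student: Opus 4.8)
The plan is to prove injectivity of $Q$ by exhibiting an explicit $1$-Lipschitz retraction $r \colon l_\infty^n \to Q$, mirroring the strategy used in Example~\ref{Expl:expl2}. The set $Q$ is cut out by pairs of two-sided constraints, one pair for each $i \in I$, where each bound $\underline{r}_i \circ \widehat{\pi}_i$ and $\overline{r}_i \circ \widehat{\pi}_i$ depends only on the coordinates other than $x_i$ and is itself $1$-Lipschitz as a composition of $1$-Lipschitz maps. The natural idea is to define, for each $i \in I$, a coordinatewise clamping map that leaves all coordinates except the $i$-th untouched and replaces $x_i$ by its median against the two bounds, namely
\begin{equation}\label{eq:retr-i}
r_i(x) := \bigl(x_1,\dots,x_{i-1}, \operatorname{med}\{\underline{r}_i(\widehat{\pi}_i(x)),\, x_i,\, \overline{r}_i(\widehat{\pi}_i(x))\},\, x_{i+1},\dots,x_n\bigr),
\end{equation}
where $\operatorname{med}$ denotes the median of three reals, equivalently $\max\{\underline{r}_i(\widehat{\pi}_i(x)),\min\{x_i,\overline{r}_i(\widehat{\pi}_i(x))\}\}$ when $(ii)$ holds. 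First I would check that each $r_i$ is $1$-Lipschitz: since only the $i$-th coordinate changes and the ambient metric is $l_\infty$, it suffices to bound the change in that single coordinate by $\left\| x - y\right\|_\infty$, which follows from the fact that the median (equivalently the nested max/min of three $1$-Lipschitz functions of $x$) is a $1$-Lipschitz function of $x$ into $\R$; assumption $(ii)$ guarantees the median equals the two-sided clamp and lands inside the admissible interval.

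Next I would compose these maps. The candidate retraction is $r := r_{i_1} \circ \cdots \circ r_{i_k}$ over an enumeration $I = \{i_1,\dots,i_k\}$, and one must verify that $r(l_\infty^n) \subset Q$ and that $r|_Q = \mathrm{id}_Q$. The fixed-point property on $Q$ is immediate because each $r_i$ fixes every point already satisfying its own pair of inequalities, and on $Q$ all inequalities hold simultaneously. The genuinely delicate point, and the one I expect to be the main obstacle, is that applying $r_i$ corrects the $i$-th inequality but may a priori disturb inequalities indexed by $j \neq i$, since $\overline{r}_j$ and $\underline{r}_j$ depend on the just-modified coordinate $x_i$. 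Thus it is not obvious that the single pass $r_{i_1}\circ\cdots\circ r_{i_k}$ produces a point of $Q$, nor that iterating converges.

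To overcome this I would argue monotonically. The key structural feature is that each $r_i$ can only move $x_i$ toward the admissible interval, and I would show that the composition decreases a suitable measure of constraint violation without ever reintroducing violations already repaired — or, failing a one-pass argument, that the infinite composition (or an inverse-limit/fixed-point argument as in the proof of Theorem~\ref{Thm:t00001}) stabilizes on $Q$. Concretely, I would track the total violation $\sum_{i\in I}\bigl(\max\{0,\underline{r}_i(\widehat\pi_i(x))-x_i\} + \max\{0,x_i-\overline{r}_i(\widehat\pi_i(x))\}\bigr)$ and show it cannot increase under any $r_j$, using that $Q\neq\emptyset$ (assumption $(i)$) to rule out the pathological case where the nested bounds are mutually inconsistent and the iteration has no limit in $Q$. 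Since each $r_i$ is a $1$-Lipschitz retraction of $l_\infty^n$ onto the injective half-space-type slab it defines, and $Q$ is the intersection of these slabs, once the composition is shown to retract onto $Q$ the injectivity of $Q$ follows immediately from the fact that a $1$-Lipschitz retract of the injective space $l_\infty^n$ is injective, as recalled in the Introduction.
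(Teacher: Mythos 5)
Your clamping maps $r_i$ are precisely the maps $\rho_i$ in the paper's proof, and you correctly isolate the crux: whether the composed iteration converges to a point of $Q$. But the argument you propose for that crux has a genuine gap, and it cannot be repaired in the form stated: when the bounds are merely $1$-Lipschitz, the iteration of $T := r_{i_k}\circ\cdots\circ r_{i_1}$ need not converge at all, even though $Q \neq \emptyset$, $(ii)$ holds, and your total-violation functional is non-increasing along the orbit. Concretely, take $n=2$, $I=\{1,2\}$, $\underline{r}_1 = \overline{r}_1 \colon x_2 \mapsto -x_2$ and $\underline{r}_2 = \overline{r}_2 \colon x_1 \mapsto x_1$; then $Q = \{x \in l_\infty^2 : x_1 = -x_2 \text{ and } x_2 = x_1\} = \{(0,0)\} \neq \emptyset$ and $(ii)$ holds with equality. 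Here $r_1(x) = (-x_2,x_2)$ and $r_2(y)=(y_1,y_1)$, so $T(x) = (r_2\circ r_1)(x) = (-x_2,-x_2)$ and $T^2(x) = (x_2,x_2)$: for any $x$ with $x_2 \neq 0$ the orbit is $2$-periodic, hence has no limit, while the total violation stays constant equal to $2|x_2|$ (non-increasing, as you predicted, but not tending to $0$); the other composition order $r_1 \circ r_2$ behaves the same way. So neither a one-pass argument, nor monotonicity of the violation, nor an inverse-limit of the iterates can produce the retraction in this generality, and assumption $(i)$ does not rescue the iteration: this is the standard failure of Picard iteration for nonexpansive (as opposed to contractive) maps.

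The paper circumvents exactly this obstruction in two stages. First it proves the statement when $\mathfrak{R} \subset \mathrm{Lip}_{\lambda}(l_\infty^{n-1},\R)$ for some $\lambda<1$: there one gets the estimate $\left\| T^{m+1}(x)-T^m(x) \right\|_\infty \le \lambda \left\| T^m(x)-T^{m-1}(x) \right\|_\infty$, so $(T^m(x))_{m}$ is Cauchy, its limit is a fixed point of $T$ and hence a point of $Q$ (by the fixed-point argument you already gave), and the pointwise limit of the maps $T^m$ is the desired $1$-Lipschitz retraction; note that the counterexample above sits exactly at $\lambda = 1$, where this estimate degenerates. Second, in the general $1$-Lipschitz case the paper localizes: by Lemma~\ref{Lem:l12} it suffices to prove that $Q \cap B(0,R)$ is injective for every $R>0$, and this set is written as a decreasing intersection $\bigcap_{k} Q_k$, where each $Q_k$ is defined by $\lambda_k$-Lipschitz affine perturbations of the bounds (with $\lambda_k = 1-1/k$) and is injective by the first stage; the intersection is then injective because a decreasing sequence of nonempty bounded hyperconvex sets has hyperconvex intersection, cf.~\cite[Theorem~5.1]{EspK}. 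Some device of this kind — strict contraction plus a limiting argument, rather than direct iteration — is what is missing from your proposal.
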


\begin{proof}
We first show the statement in the case $\mathfrak{R} \subset \mathrm{Lip}_{\lambda}(l_\infty^{n-1}, \R)$ for some $\lambda \in [0,1)$.
For $i \in I$, let us define $\rho_i \in \mathrm{Lip}_{1}(l_\infty^{n}, l_\infty^{n})$ by setting
\[
\rho_i(x) := \Bigl(x_1,\dots,x_{i-1}, \min \bigl \{  (\overline{r}_i \circ \widehat{\pi}_i)(x) , \max \{ x_i, (\underline{r}_i \circ \widehat{\pi}_i)(x)  \}  \bigr \} , x_{i+1}, \dots, x_n \Bigr)
\]
for any $x \in l_\infty^n$. Consider an enumeration $I = \{i_1,\dots,i_N\}$. Moreover, set 
\[
G_j := \rho_{i_j} \circ \cdots \circ \rho_{i_1},
\]
$G_0 := \mathrm{id}_{l_\infty^n}$ and 
\[
T := G_N = \rho_{i_N} \circ \cdots \circ \rho_{i_1}.
\]
Fix now $x \in l_\infty^n$. We show that $(T^m(x))_{m \in \mathbb{N}}$ converges to a fixed point of $T$. Let us define the maps $\{f_{i_j}\}_{i_j \in I} \subset \mathrm{Lip}_{\lambda}(l_\infty^{n},\R)$ by 
\[
f_{i_j} \ : \ y \mapsto \min \Bigl \{ (\overline{r}_{i_j}\circ \widehat{\pi}_{i_j})(y), \max \bigl \{\alpha_{i_j}, (\underline{r}_{i_j} \circ \widehat{\pi}_{i_j})(y) \bigr \} \Bigr \},
\]
where $\alpha_{i_j}:=(\pi_{i_j} \circ G_{j-1} \circ T^m)(x) = (\pi_{i_j} \circ G_j \circ T^{m-1})(x)$. We further set 
\[
\beta_{i_j} := \left|\pi_{i_j} \Bigl( ( G_j \circ T^m)(x) - T^{m}(x) \Bigr) \right|
\]
for any $i_j \in I$ and observe that 
\begin{align*}
\beta_{i_j} 
&= \left|\pi_{i_j} \Bigl((G_j \circ T^{m})(x) - (G_j \circ T^{m-1})(x) \Bigr) \right| \\
&= \left|\pi_{i_j} \Bigl((G_j \circ T^{m})(x) - (\rho_{i_j} \circ G_j \circ T^{m-1})(x) \Bigr) \right| \\
&= \left|(f_{i_j} \circ G_{j-1} \circ T^m)(x) - (f_{i_j} \circ G_j \circ T^{m-1})(x) \right| \\
&\le \lambda \left\| ( G_{j-1} \circ T^{m})(x) - ( G_j \circ T^{m-1})(x) \right\|_{\infty} \\
&\le \lambda \left\|  ( G_{j-1} \circ T^m)(x) - (G_{j-1} \circ T^{m-1})(x)  \right\|_{\infty} \\
&\le \lambda \left\|  T^m(x) - T^{m-1}(x)  \right\|_{\infty}.
\end{align*}
Thus
\[
 \left\|  T^{m+1}(x) - T^{m}(x)  \right\|_{\infty} \le \max_{i_j \in I } \beta_{i_j} \le \lambda \left\|  T^{m}(x) - T^{m-1}(x)  \right\|_{\infty}.
\]
It easily follows that $(T^m(x))_{m \in \mathbb{N}}$ is a Cauchy sequence and thus converging to a fixed point $x^*$ of $T$. This implies in particular that $x^* \in Q$. 

We now prove the statement in case only $\mathfrak{R} \subset \mathrm{Lip}_1(l_\infty^{n-1}, \R)$ is assumed. Moreover, assume without loss of generality that $0 \in Q$. By Lemma \ref{Lem:l12}, it is enough to show that for any $R > 0$, the set $Q \cap B(0,R) \subset l_\infty^n$ is injective. Fix $R > 0$ and note that $g(0) = 0$ for any $g \in \mathfrak{R}$, hence $g(B(0,R)) \subset B(0,R)$ and thus  $-R \le (\underline{r}_i \circ \widehat{\pi}_i)(x) \le (\overline{r}_i \circ \widehat{\pi}_i)(x) \le R$ for any $x \in B(0,R)$. We can thus set for $k \in \mathbb{N}$ and $i \in I$: $\lambda_k := 1 - \frac{1}{k}$ as well as 
\[
(\underline{r}_i^k \circ \widehat{\pi}_i)(x) := \lambda_k \bigl[ (\underline{r}_i \circ \widehat{\pi}_i)(x) - R \bigr] + R
\]
and
\[ (\overline{r}_i^k \circ \widehat{\pi}_i)(x) := \lambda_k \bigl[ (\overline{r}_i \circ \widehat{\pi}_i)(x) - R \bigr] + R.
\]
Set now for any $k \in \mathbb{N}$:
\[
Q_k := \Bigl \{ x \in B(0,R) : \forall i \in I , \ (\underline{r}_i^k \circ \widehat{\pi}_i)(x) \le x_i \le (\overline{r}_i^k \circ \widehat{\pi}_i)(x) \Bigr \}.
\]
Note that $\mathfrak{R}^k := \{\underline{r}_i^k : i \in I \} \cup \{\overline{r}_{i}^k : i \in I \}$ satisfies $\mathfrak{R}^k \subset \mathrm{Lip}_{\lambda_k}(l_\infty^{n-1}, \R)$. Hence, we can apply the above argument and define the $1$-Lipschitz retraction $r^k \colon B(0,R) \to Q_k$ to be the pointwise limit of the sequence $(T^{m,k})_{m \in \mathbb{N}}$. It follows that $Q_k $ is injective. Finally, since the sequence $(Q_k)_{k \in \mathbb{N}}$ is decreasing for the inclusion and
\[
Q \cap B(0,R) = \bigcap_{k \in \mathbb{N}} Q_k,
\]
it follows that $Q \cap B(0,R)$ is injective (cf. for instance~\cite[Theorem~5.1]{EspK}).
\end{proof}

\begin{comment}
\begin{Rem}\label{Rem:r11}
It is easy to see from the proof of Proposition~\ref{Prop:p1} that the following holds; let $I^1,I^2,I^3 \subset I_n$ with $I^i \cap I^j = \emptyset$ if $i \neq j$ and 
\[
\mathfrak{R}^1 := \{\underline{r}_{i} : i \in I\} \cup \{ \overline{r}_{i} : i \in I \}, \ \
\mathfrak{R}^2 := \{\underline{r}_{i} : i \in I^2\}, \ \
\mathfrak{R}^3 := \{\overline{r}_{i} : i \in I^3\}
\]
such that $\mathfrak{R}^1, \mathfrak{R}^2, \mathfrak{R}^3 \subset \mathrm{Lip}_1(l_\infty^{n-1}, \R)$. Set moreover
\begin{align*}
Q^1 &:= \Bigl \{ x \in \R^n : \forall i \in I^1 , \ (\underline{r}_i \circ \widehat{\pi}_i)(x)  \le x_i \le (\overline{r}_i \circ \widehat{\pi}_i)(x) \Bigr \},\\
Q^2 &:= \Bigl \{ x \in \R^n : \forall i \in I^2 , \ (\underline{r}_i \circ \widehat{\pi}_i)(x)  \le x_i \Bigr \}, \\
Q^3 &:= \Bigl \{ x \in \R^n : \forall i \in I^3 , \ x_i \le (\overline{r}_i \circ \widehat{\pi}_i)(x) \Bigr \},
\end{align*}
so that $Q^1,Q^2,Q^3 \subset l_{\infty}^n$. Assume finally that $Q := Q^1 \cap Q^2 \cap Q^3 \neq \emptyset$ and that for any $i \in I^1$, $\underline{r}_i \le \overline{r}_{i}$. It follows that $Q$ is injective.
\end{Rem}
\end{comment}

%It is easy to see from the proof of Proposition \ref{Prop:p1} that the following holds.
We shall later need a statement which is slightly more general than Proposition~\ref{Prop:p1} and whose proof is a direct analogue of the above proof. Let $I^1,I^2,I^3 \subset I_n$ with $I^i \cap I^j = \emptyset$ if $i \neq j$ and 
\[
\mathfrak{R}^1 := \{\underline{r}_{i} : i \in I\} \cup \{ \overline{r}_{i} : i \in I \}, \ \
\mathfrak{R}^2 := \{\underline{r}_{i} : i \in I^2\}, \ \
\mathfrak{R}^3 := \{\overline{r}_{i} : i \in I^3\}
\]
such that $\mathfrak{R}^1, \mathfrak{R}^2, \mathfrak{R}^3 \subset \mathrm{Lip}_1(l_\infty^{n-1}, \R)$. Set moreover
\begin{align*}
Q^1 &:= \Bigl \{ x \in \R^n : \forall i \in I^1 , \ (\underline{r}_i \circ \widehat{\pi}_i)(x)  \le x_i \le (\overline{r}_i \circ \widehat{\pi}_i)(x) \Bigr \},\\
Q^2 &:= \Bigl \{ x \in \R^n : \forall i \in I^2 , \ (\underline{r}_i \circ \widehat{\pi}_i)(x)  \le x_i \Bigr \}, \\
Q^3 &:= \Bigl \{ x \in \R^n : \forall i \in I^3 , \ x_i \le (\overline{r}_i \circ \widehat{\pi}_i)(x) \Bigr \},
\end{align*}
so that $Q^1,Q^2,Q^3 \subset l_{\infty}^n$. Assume finally that $Q := Q^1 \cap Q^2 \cap Q^3 \neq \emptyset$ and that for any $i \in I^1$, $\underline{r}_i \le \overline{r}_{i}$. It follows that $Q$ is injective.

%07-805-393

%%%%%%%%%%%%%%%%%%%%%%%%%%%%%%%%%%%%%%%%%%%%%%%%%%%%%%%%%%%%%%%%%%%%%%%%%%%%%%%%%%%%%%%%%%%%%%%
%%%%%%%%%%%%%%%%%%%%%%%%%%%%%%%%%%%%%%%%%%%%%%%%%%%%%%%%%%%%%%%%%%%%%%%%%%%%%%%%%%%%%%%%%%%%%%
%%%%%%%%%%%%%%%%%%%%%%%%%%%%%%%%%%%%%%%%%%%%%%%%%%%%%%%%%%%%%%%%%%%%%%%%%%%%%%%%%%%%%%%%%%%%%%%
%%%%%%%%%%%%%%%%%%%%%%%%%%%%%%%%%%%%%%%%%%%%%%%%%%%%%%%%%%%%%%%%%%%%%%%%%%%%%%%%%%%%%%%%%%%%%%
%%%%%%%%%%%%%%%%%%%%%%%%%%%%%%%%%%%%%%%%%%%%%%%%%%%%%%%%%%%%%%%%%%%%%%%%%%%%%%%%%%%%%%%%%%%%%%%
%%%%%%%%%%%%%%%%%%%%%%%%%%%%%%%%%%%%%%%%%%%%%%%%%%%%%%%%%%%%%%%%%%%%%%%%%%%%%%%%%%%%%%%%%%%%%%
%%%%%%%%%%%%%%%%%%%%%%%%%%%%%%%%%%%%%%%%%%%%%%%%%%%%%%%%%%%%%%%%%%%%%%%%%%%%%%%%%%%%%%%%%%%%%%%
%%%%%%%%%%%%%%%%%%%%%%%%%%%%%%%%%%%%%%%%%%%%%%%%%%%%%%%%%%%%%%%%%%%%%%%%%%%%%%%%%%%%%%%%%%%%%%

\section{The Cone $\mathrm{K}_C$}\label{sec:s5}
For $j \in I_n=\{1,\dots,n\}$, let us define the cone
\[
C_j := \{ x \in \R^n : x_j = \left\| x \right\|_{\infty} \},
\]
note that 
\[
\mathrm{int}(C_j)= \{ x \in \R^n : x_j > \max_{i \in I_n \setminus \{ j \} } | x_i | \}
\]
and set 
\[
\mathcal{C} := \{-C_j : j \in I_n \} \cup \{C_j : j \in I_n \}.
\]
Let $\emptyset \neq C \subset l_{\infty}^n$ be a convex polyhedral cone; in particular, $0 \in \mathrm{apex}(C)$ and $C = C + C = \lambda C$ for $\lambda > 0$ . Define
\[
\mathcal{S}_C := \{ C' \in \mathcal{C} : \mathrm{int}(C') \cap C = \emptyset \}.
\]
%and for $p \in \R^n$, $a \in \mathrm{apex}(C)$:
%\[
%S_{p,a} := \{ C' \in \mathcal{C} : p \in a + C' \}.
%\]
Finally, set 
\begin{align*}
\bar{\mathrm{K}}_C 
:=& \ \{ p \in \R^n : \exists \  a \in \mathrm{apex}(C) \text{ such that } \{ C' \in \mathcal{C} : p \in a + C' \} \subset \mathcal{S}_C \} \\
=& \ \mathrm{apex}(C) + \left(\R^n \setminus \bigcup_{C' \in \mathcal{C} \setminus \mathcal{S}_C} C' \right)
\end{align*}
and
\begin{equation}\label{eq:e0000}
\mathrm{K}_C := \R^n \setminus \bar{\mathrm{K}}_C,
\end{equation}
noting in particular that $\mathrm{K}_C$ is a cone, $C \subset \mathrm{K}_C$ and $\mathrm{apex}(C) + \mathrm{K}_C = \mathrm{K}_C $. Although we shall use the above expression in the proof of Lemma \ref{Lem:l1}, note that $\mathrm{K}_C$ also admits the expression
\begin{equation}\label{eq:e0001}
\mathrm{K}_C := \bigcap_{a \in \mathrm{apex}(C)} \bigcup_{C' \in \mathcal{C} \setminus \mathcal{S}_C} (a + C').
\end{equation}
%For a face $F \subsetneq [-1,1]^n = B(0,1)\subset l_{\infty}^n$, let us denote by $\mathfrak{F}_F$ the set of all facets of $B(0,1)$ that contain $F$. Let us call $F$ \textit{isolated} if and only if for any $F' \in \mathfrak{F}_F$, the unique cone $F' \subset C' \in \mathcal{C}$ satisfies $C' \in \mathcal{S}_C$.
For a $\nu \in \R^n \setminus \{ 0\}$, let us denote by 
\[
H_{\nu} := \{ x \in \R^n : x \cdot \nu \ge 0 \}
\]
the corresponding inner half-space at the origin with normal vector $\nu$. Moreover, we shall again denote the standard basis of $\R^n$ by $\{e_1,\dots,e_n\}$. Note that in this notation and for any $j \in I_n$,
\[
C_j = \bigcap_{(i,\sigma) \in (I_n \setminus \{j\}) \times \{ \pm 1 \} } H_{e_j + \sigma e_i} \ \ \text{ and } \ \ -C_j = \bigcap_{(i,\sigma) \in (I_n \setminus \{j\}) \times \{ \pm 1 \} } H_{-e_j + \sigma e_i}.
\]
We shall now prove that the cone $\mathrm{K}_C \subset l_{\infty}^n$ is injective. The purpose of introducing $\mathrm{K}_C$ is that we shall be able to construct in the proof Theorem~\ref{Thm:t2} a $1$-Lipschitz retraction of $\mathrm{K}_C$ onto $C$. It will follow from Lemma~\ref{Lem:l22} that $\mathrm{K}_C$ consists of the union of $C$ and points $p \in l_{\infty}^n$ that are contained in a finite intersection $\bigcap_{i} B(x_i,r_i)$ of balls centered at points $x_i \in C$ such that 
\[
\mathrm{apex}(C) \cap \bigcap_{i} B(x_i,r_i)  = \emptyset.
\]

%which provides in particular with the help of \label{Prop:p1}

\begin{Lem}\label{Lem:l1}
Let $C \subset l_{\infty}^n$ be a convex polyhedral cone such that $\mathrm{int}(C) \neq \emptyset$, then $\mathrm{K}_C \subset l_{\infty}^n$ is injective.
\end{Lem}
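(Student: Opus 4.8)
The plan is to prove injectivity of $\mathrm{K}_C$ by exhibiting an explicit $1$-Lipschitz retraction $r \colon l_\infty^n \to \mathrm{K}_C$, or equivalently (via the hyperconvexity characterization and Lemma~\ref{Lem:l12}) to verify hyperconvexity directly. Given the structure of the paper, I would favor showing that $\mathrm{K}_C$ can be written in the form treated by Proposition~\ref{Prop:p1} (or its generalization stated immediately after it), since that proposition is the paper's main tool for producing injective sets as intersections of ``two-variable-per-inequality'' type slabs with $1$-Lipschitz bounding functions. The first step is therefore to unwind the combinatorial definition of $\mathrm{K}_C$. Using the expression $\mathrm{K}_C = \bigcap_{a \in \mathrm{apex}(C)} \bigcup_{C' \in \mathcal{C} \setminus \mathcal{S}_C}(a + C')$ from \eqref{eq:e0001}, together with the description of each $C_j$ as an intersection of half-spaces $H_{e_j + \sigma e_i}$, I would decode membership in $\mathrm{K}_C$ into a family of coordinatewise constraints.

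The key geometric observation to extract is this: $C' \in \mathcal{C} \setminus \mathcal{S}_C$ means $\mathrm{int}(C') \cap C \neq \emptyset$, so the cones $\pm C_j$ that ``point into'' $C$ are exactly the directions that survive. I expect that after translating by $\mathrm{apex}(C)$ (using $\mathrm{apex}(C) + \mathrm{K}_C = \mathrm{K}_C$, so one may quotient by the lineality space and assume the apex is $\{0\}$), the condition $p \in \mathrm{K}_C$ becomes: for a certain index set, $p_j = \|p'\|$-type equalities are replaced by one-sided inequalities $p_j \le \overline{r}_j(\widehat{\pi}_j(p))$ or $p_j \ge \underline{r}_j(\widehat{\pi}_j(p))$, where the bounding functions are built from the surviving $C_j$'s and are $1$-Lipschitz because each $C_j$ is defined by the $1$-Lipschitz data $x_j \le \max_{i\neq j}|x_i|$ on its boundary. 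The second step is then to check that the resulting system has the exact shape required by the generalized version of Proposition~\ref{Prop:p1}: a disjoint partition of indices into two-sided, lower-only, and upper-only constraints, each bound a $1$-Lipschitz function of the remaining coordinates, with $\underline{r}_i \le \overline{r}_i$ on the two-sided indices, and $Q \neq \emptyset$ (which holds since $0 \in C \subset \mathrm{K}_C$).

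The main obstacle I anticipate is precisely this \emph{reduction to slab form}: it is not a priori obvious that the union-over-$\mathcal{C}\setminus\mathcal{S}_C$ structure in \eqref{eq:e0001} collapses to a clean intersection of one-sided $1$-Lipschitz constraints rather than some genuinely non-convex or non-separable set. In particular I would need to argue that for each coordinate $j$ at most one of $C_j, -C_j$ survives in the relevant sense, so that $j$ receives at most an upper bound \emph{or} at most a lower bound (or, on the lineality directions, no constraint), and that these bounds depend only on the \emph{other} coordinates in a $1$-Lipschitz way. The heuristic that $\mathrm{K}_C$ is the union of $C$ with all ball-intersection points $\bigcap_i B(x_i,r_i)$ centered in $C$ that miss $\mathrm{apex}(C)$, promised after the statement, strongly suggests the set is built exactly to be hyperconvex relative to $C$, so I would use that intersection-of-balls picture as a sanity check on the algebra.

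Once the slab description is established the conclusion is immediate: Proposition~\ref{Prop:p1} (in the generalized form) yields that $Q = \mathrm{K}_C$ is injective, finishing the proof. If the direct reduction proves too rigid, the fallback is to verify hyperconvexity by hand using Lemma~\ref{Lem:l22} (the forthcoming characterization of $\mathrm{K}_C$ via ball intersections) and Lemma~\ref{Lem:l12}, checking that any consistent family of balls has nonempty intersection inside $\mathrm{K}_C$; but I expect the Proposition~\ref{Prop:p1} route to be both shorter and more in keeping with the paper's machinery.
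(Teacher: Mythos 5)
Your high-level strategy coincides with the paper's: write $\mathrm{K}_C$ as a coordinatewise slab system with $1$-Lipschitz bounds and apply the generalized form of Proposition~\ref{Prop:p1}. The proposal breaks, however, at the structural claim on which your reduction rests, namely that ``for each coordinate $j$ at most one of $C_j,-C_j$ survives'', so that each coordinate receives at most a one-sided constraint. This is wrong on two counts. First, the constraints on $x_j$ are contributed by the cones lying \emph{in} $\mathcal{S}_C$, not by the surviving ones: an upper bound on $x_j$ arises when $C_j\in\mathcal{S}_C$ and a lower bound when $-C_j\in\mathcal{S}_C$. Second, and more importantly, it is entirely typical that \emph{both} $C_j$ and $-C_j$ lie in $\mathcal{S}_C$: for $C=C_n$ one has $\mathcal{S}_C=\mathcal{C}\setminus\{C_n\}$, so every coordinate $j\neq n$ carries the genuine two-sided constraint $-|x_n|\le x_j\le |x_n|$ (and indeed $\mathrm{K}_C=C_n$). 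The paper accordingly splits the indices into a one-sided class $I^1$ and a two-sided class $I^2$, and the crux of its proof is exactly the compatibility \eqref{eq:e0}, $\underline{r}_j\circ\widehat{\pi}_j\le\overline{r}_j\circ\widehat{\pi}_j$ for $j\in I^2$, established by constructing the auxiliary cones $\widetilde{C}_j^{\pm 1}$, showing $\mathrm{apex}(C)\cap\mathrm{int}(\widetilde{C}_j^{1})=\emptyset$ via the union $A_j$ of cones in $\mathcal{S}_C$, and deducing $[\mathrm{apex}(C)+\mathrm{int}(\widetilde{C}_j^{1})]\cap[\mathrm{apex}(C)+\widetilde{C}_j^{-1}]=\emptyset$. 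You do list ``$\underline{r}_i\le\overline{r}_i$ on the two-sided indices'' among the hypotheses to verify, but your actual plan asserts that two-sided indices do not occur, so the proposal contains no argument for the one step where the real difficulty sits.

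Two further gaps. The reduction ``quotient by the lineality space and assume the apex is $\{0\}$'' is not available: $\mathrm{apex}(C)$ is an arbitrary linear subspace of $l_\infty^n$, generally not a coordinate subspace and not even injective (cf.\ \eqref{eq:e0000000}), so there is no isometric splitting; the paper instead keeps the apex and defines $\overline{r}_j:=\inf_{a\in\mathrm{apex}(C)}r_a^{j,1}$ and $\underline{r}_j:=\sup_{a\in\mathrm{apex}(C)}r_a^{j,-1}$, which remain $1$-Lipschitz as infima/suprema of affine $1$-Lipschitz functions. Moreover, the identification of $\mathrm{K}_C$ with the slab set (the paper's $N_C$ in \eqref{eq:e1111111}) is itself a nontrivial two-inclusion argument running through the faces of $[-1,1]^n$; you correctly flag it as the main obstacle but leave it entirely open. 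So while the tool and the target form are the right ones, as written the proof would fail at the two-sided coordinates, and the set identification remains unproven.
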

\begin{proof}
We shall use Proposition \ref{Prop:p1}. We set
\begin{align*}
I^1 &:= \{ j \in I_n : \exists \sigma \in \{\pm 1\} \text{ such that } \sigma C_j \in \mathcal{S}_C \text{ and } -\sigma C_j \notin \mathcal{S}_C \}, \\
I^2 &:= \{ j \in I_n : \{C_j,-C_j\} \subset \mathcal{S}_C \}.
\end{align*}
Whenever $j \in I^1$ and $\tau C_j \in \mathcal{S}_C$, set
\[
I_{(j,\tau)}^1 := \{ (i,\sigma) \in (I_n \setminus \{j\}) \times \{ \pm 1 \}  : \sigma C_i \in \mathcal{S}_C \}
\]
and whenever $j \in I^2$, let
\[
I_{(j,\tau)}^2 := I_j^2 := \{ (i,\sigma) \in (I_n \setminus \{j\}) \times \{ \pm 1 \}  :  \{C_j,-C_j\} \subset \mathcal{S}_C \}.
\]
For $\alpha \in \{1,2\}$ and $j \in I^{\alpha}$, we define the cones
\[
\widetilde{C}_j^{\tau} := \bigcap_{(i,\sigma) \in [(I_n \setminus \{j\}) \times \{ \pm 1 \} ] \setminus  I_{(j,\tau)}^{\alpha}} H_{\tau e_j - \sigma e_i}
\]
with $\widetilde{C}_j^{\tau} := H_{ \tau e_j }$ if $[(I_n \setminus \{j\}) \times \{ \pm 1 \} ] \setminus  I_{(j,\tau)}^{\alpha} = \emptyset$ and define for $a \in \mathrm{apex}(C)$ and $x \in \R^n$ corresponding $1$-Lipschitz functions by
\[
r_{a}^{j,\tau}(x) := a_j + \tau \max_{(i,\sigma) \in [(I_n \setminus \{j\}) \times \{ \pm 1 \} ] \setminus  I_{(j,\tau)}^{\alpha}} \sigma (x_i - a_i).
\] 
If $\tau =1$, then $y \in a + \widetilde{C}_j^{1}$ if and only if $y_j \ge r_{a}^{j,1}(y)$. We set $(\overline{r}_j \circ \widehat{\pi}_j)(x) := \inf_{a \in \mathrm{apex}(C)} r_{a}^{j,1}(x)$ and
\[
N_{(j,1)}:= \bigcap_{a \in \mathrm{apex}(C)} \bigl[ \R^n \setminus \mathrm{int}(a + \widetilde{C}_j^{1}) \bigr] = \Bigl \{ x \in \R^n : x_j \le (\overline{r}_j \circ \widehat{\pi}_j)(x) \Bigr \}.
\]
If $\tau =-1$, then $y \in a + \widetilde{C}_j^{-1}$ if and only if $y_j \le r_{a}^{j,-1}(y)$. We  set $(\underline{r}_j \circ \widehat{\pi}_j)(x) := \sup_{a \in \mathrm{apex}(C)} r_{a}^{j,-1}(x)$ and
\[
N_{(j,-1)}:= \bigcap_{a \in \mathrm{apex}(C)} \bigl[ \R^n \setminus \mathrm{int}(a + \widetilde{C}_j^{-1}) \bigr] = \Bigl \{ x \in \R^n : x_j \ge (\underline{r}_j \circ \widehat{\pi}_j)(x) \Bigr \}.
\]
If $j \in I^1$ and $\tau C_j \in \mathcal{S}_C$, we set $N_{(j,-\tau)} := \R^n$. Now, if $j \in I^2$ we need to show that 
\begin{equation}\label{eq:e0}
\underline{r}_j \circ \widehat{\pi}_j \le \overline{r}_j \circ \widehat{\pi}_j,
\end{equation}
before we can apply the statement after Proposition \ref{Prop:p1}. Let us set 
\[
A_j := C_j \cup \bigcup_{ (l,\eta) \in I_{(j,1)}^2} \eta C_l.
\]
It is easy to see that $\mathrm{apex}(C) \cap \mathrm{int}(A_j) = \emptyset$ since $\mathrm{int}(C) \neq 0$. Furthermore, $\widetilde{C}_j^{1} \subset A_j$ since for $x \in  \widetilde{C}_j^{1}$, if $(i,\sigma) \in [(I_n \setminus \{j\}) \times \{ \pm 1 \} ] \setminus  I_{(j,1)}^2$, then $x_j \ge \sigma x_i$. Hence, either $x_j = \left\| x \right\|_{\infty}$ or there is $(l,\eta) \in I_{(j,1)}^2$ such that $\eta x_l = \left\| x \right\|_{\infty}$. It follows in particular that $\mathrm{apex}(C) \cap \mathrm{int}(\widetilde{C}_j^{1}) = \emptyset$. One then easily deduces (noting that $\widetilde{C}_j^{-1}=-\widetilde{C}_j^{1}$) that
\[
\bigl[ \mathrm{apex}(C) + \mathrm{int}(\widetilde{C}_j^{1}) \bigr] \cap \bigl[ \mathrm{apex}(C) + \widetilde{C}_j^{-1} \bigr] = \emptyset
\]
and this implies that $\underline{r}_j \circ \widehat{\pi}_j \le \overline{r}_j \circ \widehat{\pi}_j$. Indeed, if $r_{a}^{j,1}(y) < r_{a'}^{j,-1}(y)$ for some $y \in \R^n$, it follows that $[a + \mathrm{int}(\widetilde{C}_j^{1})] \cap [a' + \mathrm{int}(\widetilde{C}_j^{-1})] \neq \emptyset$.
Now, on the one hand, it is easy to see that setting 
\begin{equation}\label{eq:e1111111}
N_C := \bigcap_{(i,\sigma) \in (I^1 \cup I^2) \times \{\pm 1\}} N_{(i,\sigma)}
\end{equation}
it follows that $\mathrm{K}_C = N_C$. Indeed, note that $\R^n \setminus \mathrm{K}_C \subset \R^n \setminus N_C$ since if a face $F$ of $[-1,1]^n$ which satisfies 
\begin{align*}
F \in \mathcal{F} := 
 \bigl \{ &F' \in \mathrm{Faces}([-1,1]^n) \setminus \{[-1,1]^n\} : \\
& \forall (i,\sigma) \in I_n \times \{\pm 1\}, \text{ if } F' \subset \sigma C_i \text{ then } \sigma C_i \in \mathcal{S}_C \bigr \},
\end{align*}
then $\mathrm{relint}(F) \cap N_C = \emptyset$.  Indeed, in the asymmetric case where $F$ is such that $-F \notin \mathcal{F}$, there is then $ j \in I^1$ such that $F \subset \sigma C_j \subset \mathcal{S}_C$ for some $\sigma \in \{ \pm 1 \}$ and thus $\mathrm{relint}(F)$ is in the complement of $N_{(j,\sigma)}$. In the symmetric case where both $F$ and $-F$ are in $\mathcal{F}$, there is then $ j \in I^2$ such that $F \subset \sigma C_j$, $-F \subset -\sigma C_j$ and $\{C_j,-C_j\} \subset \mathcal{S}_C$ for some $\sigma \in \{ \pm 1 \}$, thus $\mathrm{relint}(F)$ is in the complement of $N_{(j,\sigma)}$. Hence in both cases and for any $\lambda > 0$, one has:
\[
\mathrm{relint}\Bigl(\lambda F + \mathrm{apex}(C) \Bigr) \cap N_C = \emptyset
\]
and thus $\R^n \setminus \mathrm{K}_C \subset \R^n \setminus N_C$. Now, note that if $x \in \R^n \setminus N_C$, then $x \in a + \mathrm{int}(\widetilde{C}_j^{\tau})$ for some $j \in I^{\alpha}$ verifying $\tau C_j \in \mathcal{S}_C$ for some $\tau \in \{ \pm 1\}$. Hence 
\[
x \in a + \mathrm{int}\left( \tau C_j \cup \bigcup_{ (l,\eta) \in I_{(j,\tau)}^{\alpha}} \eta C_l \right)
\]
and thus $x \notin \mathrm{K}_C$ by \eqref{eq:e0000}. Finally, by \eqref{eq:e0} we can, using \eqref{eq:e1111111}, apply the statement following the proof of Proposition \ref{Prop:p1} to $N_C \subset l_{\infty}^n$ in order to obtain that $N_C$ is injective and thus so is $\mathrm{K}_C \subset l_{\infty}^n$, which finishes the proof.
\end{proof}
To illustrate Lemma \ref{Lem:l1}, consider the case where $C = H_{e_n}$. It follows that $\mathrm{apex}(C)=\partial H_{e_n}$ and $\mathcal{S}_C=\{ -C_n \}$. Thus $\mathrm{K}_C = C =H_{e_n} $ is injective by Lemma~\ref{Lem:l1}, which we already know from the statement following the proof of Proposition~\ref{Prop:p1}. In the case where $C = C_n$, one has $\mathrm{apex}(C)=\{0\}$ and $\mathcal{S}_C= \mathcal{C} \setminus \{ C_n \}$. Hence $\mathrm{K}_C=C=C_n$ is injective as we already know. Finally, if 
\[
C = C_n^{\eps} := \left \{ x \in \R^n : x_n \ge \max_{i \in I_n \setminus \{ n \} } (1+\eps)| x_i | \right \} \subset \mathrm{int}(C_n) \cup \{ 0 \}
\]
for some $\eps >0$, then we again have $\mathrm{apex}(C)=\{0\}$, $\mathcal{S}_C=\mathcal{C} \setminus \{C_n \}$ and $\mathrm{K}_C=C_n$. We moreover denote by 
\[
\mathrm{aff}(X) := \left \{ \sum_{i=1}^{l} \alpha^i x^i : \{\alpha^1,\dots,\alpha^l\} \subset \R, \ \{x^1,\dots,x^l\} \subset X, \sum_{i=1}^{l} \alpha^i = 1 \right \} \subset \R^n
\]
the \textit{affine hull} of a subset $\emptyset \neq X \subset \R^n$. We now define a class of polytopes that can be obtained as a finite intersection of balls in $l_{\infty}^n$.

\begin{Def}\label{Def:d1}

For $1 \le k \le n$, let
\begin{align*}
\mathcal{I}_k :=  \Biggl \{ &[-1,1]^n \cap \bigcap_{j=1}^k \bigl( T_{p^j} F_j - p^j \bigr) \subset \R^n : \\
 &\text{ for all } j \in \{1,\dots,k\}, \  F_j \in \mathrm{Facets}([-1,1]^n) \text{ and } p^j \in F_j    \Biggr \}.
\end{align*}
%Assume $C \subset \R^n$ is a convex polyhedral cone such that $\mathrm{int}(C) \neq \emptyset$ and $0 \le k := \mathrm{dim}(\mathrm{apex}(C)) \le n$.
\end{Def}

The next lemma will enable us to find for any $p \in \mathrm{K}_C \setminus C$ a face $F \in \mathrm{Faces}(P,\mathrm{apex}(C))^c$ of a polytope $P \in \mathcal{I}_k$ such that for some $\bar{\gamma} \in (0,\infty)$ and $\bar{a} \in \mathrm{apex}(C)$, $F_p:=\bar{\gamma}F + \bar{a}$ contains $p$. The interesting feature of $F_p$ will be that it is stable under any $1$-Lipschitz retraction of $l_{\infty}^n$ onto a set containing $C$. It is key that the set $\mathcal{I}_k$ is finite for every $k$ and that $\mathrm{Faces}(P)$ is finite for any $P \in \mathcal{I}_k$. %Finally, we shall make use in the proof of Theorem~\ref{Thm:t2} of the relation $\Delta(a,\gamma) = \Delta(0,\gamma) = \gamma \Delta(0,1)$ given in the statement below.

\begin{Lem}\label{Lem:l22}
Let $C \subset l_{\infty}^n$ be a convex polyhedral cone such that $\mathrm{int}(C) \neq \emptyset$ and $0 \le k := \mathrm{dim}(\mathrm{apex}(C)) < n$.
Define $\Delta \colon \mathrm{apex}(C) \times (0,\infty) \to \R$ by
\[
\Delta(a,\gamma) := \min_{P \in \mathcal{I}_k} \min_{F \in \mathrm{Faces}(P,\mathrm{apex}(C))^c} d(\gamma F + a , \mathrm{apex}(C))
\]
with $\min_{F \in \mathrm{Faces}(P,\mathrm{apex}(C))^c} d(\gamma F + a, \mathrm{apex}(C)) := \infty$ if $\mathrm{Faces}(P,\mathrm{apex}(C))^c = \emptyset$. 
%The following clearly hold:
%\begin{enumerate}[(a)]
%\item $\Delta(0,1) > 0$ and %$\infty > \Delta(0,1) > 0$ and 
%\item $\Delta(a,\gamma) = \Delta(0,\gamma) = \gamma \Delta(0,1)$.
%\end{enumerate}
Then, for each $p \in \mathrm{K}_C$ so that $d(p,\mathrm{apex}(C)) = \eta >0$, there are $(\bar{a},\bar{\gamma}) \in \mathrm{apex}(C) \times [\eta,\infty)$, $P \in \mathcal{I}_k$ and $F \in \mathrm{Faces}(P,\mathrm{apex}(C))^c$ such that for $F_p := \bar{\gamma} F + a$, one has:
\begin{enumerate}[$(i)$]
\item $p \in F_p$ as well as
\item $d(F_p,\mathrm{apex}(C))$ is positive, $\Delta(0,1) \neq \infty$ and $\Delta(0,1)$ is positive as well. In addition: 
\[
d(F_p,\mathrm{apex}(C)) \ge \Delta(a,\bar{\gamma}) = \bar{\gamma} \Delta(0,1) \ge \eta \Delta(0,1).
\]
\item Moreover, for any set $C \subset X \subset l_{\infty}^n$ and any retraction $r \in \mathrm{Lip}_1(l_{\infty}^n, X)$ onto $X$, one has $r(F_p) \subset F_p$.
\end{enumerate}
%We then set $F_p := F$.
\end{Lem}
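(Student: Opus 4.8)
The plan is to produce the box $F_p$ as a finite intersection of $l_\infty^n$-balls centred in $C$, and then to reinterpret this same box as a rescaled translate of a face of a polytope in $\mathcal{I}_k$; write $V:=\mathrm{apex}(C)$ throughout. Two structural observations drive everything. \emph{First}, any finite intersection $\bigcap_i B(x_i,r_i)$ in $l_\infty^n$ is an axis-parallel box, since $B(x_i,r_i)=\prod_{l}[x_{i,l}-r_i,x_{i,l}+r_i]$ and hence $\bigcap_i B(x_i,r_i)=\prod_{l}[\max_i(x_{i,l}-r_i),\min_i(x_{i,l}+r_i)]$; conversely every bounded box arises this way, and one may take the radii as large as desired at the cost of pushing the centres correspondingly far off. \emph{Second}, a cone $T_{p^j}F_j-p^j$ attached to a facet $F_j$ of $[-1,1]^n$ is an orthant-type cone contained in a coordinate hyperplane, so every $P\in\mathcal{I}_k$ is itself a box with coordinate intervals among $\{0\},[-1,0],[0,1],[-1,1]$, and every face $F$ of $P$ is again a box; thus $\bar\gamma F+\bar a$ is a box. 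This already settles clause $(iii)$: if $F_p=\bigcap_i B(x_i,r_i)$ with all centres $x_i\in C\subset X$, then any $r\in\mathrm{Lip}_1(l_\infty^n,X)$ retracting onto $X$ fixes each $x_i$, so $\|r(y)-x_i\|_\infty=\|r(y)-r(x_i)\|_\infty\le\|y-x_i\|_\infty\le r_i$ forces $r(B(x_i,r_i))\subset B(x_i,r_i)$ and hence $r(F_p)\subset F_p$.

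Next I would dispose of the statements about $\Delta$. Since $V$ is a linear subspace and $\bar a\in V$, distance to $V$ is translation-invariant by $\bar a$ and homogeneous under scaling, so $d(\bar\gamma F+\bar a,V)=\bar\gamma\,d(F,V)$ and therefore $\Delta(\bar a,\bar\gamma)=\bar\gamma\Delta(0,1)$. Moreover $\mathcal{I}_k$ is finite (finitely many facets of the cube, and $T_{p^j}F_j-p^j$ depends only on the minimal face of $F_j$ through $p^j$), each $P\in\mathcal{I}_k$ has finitely many faces, and for $F\in\mathrm{Faces}(P,V)^c$ the face $F$ is compact and disjoint from the closed set $V$, so $d(F,V)>0$; hence $\Delta(0,1)$ is a minimum of finitely many numbers in $(0,\infty]$, and once the construction exhibits one admissible pair $(P,F)$ one gets $\Delta(0,1)\in(0,\infty)$. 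The estimate in $(ii)$ then follows: pulling $F_p=\bar\gamma F+\bar a$ back through $x\mapsto\bar\gamma x+\bar a$ shows $F_p\cap V=\emptyset$ (otherwise $F\cap V\neq\emptyset$), so $d(F_p,V)>0$ by compactness, while $F_p$ is one of the terms in the minimum defining $\Delta(\bar a,\bar\gamma)$, giving $d(F_p,V)\ge\Delta(\bar a,\bar\gamma)=\bar\gamma\Delta(0,1)\ge\eta\Delta(0,1)$.

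The heart of the matter is the construction of $F_p$ and its matching with $\mathcal{I}_k$. The mechanism is the equivalence $p\in a+\sigma C_j\iff p\in B(a,\sigma(p_j-a_j))$, valid for $a\in V\subset C$: membership of $p-a$ in a cone $\sigma C_j$ is the same as membership of $p$ in a ball centred at $a$ that touches $p$ on its $j$-th face. By definition of $\mathrm{K}_C$, for every $a\in V$ the point $p-a$ lies in some good cone $\sigma C_j\in\mathcal{C}\setminus\mathcal{S}_C$, and since $\mathrm{int}(\sigma C_j)\cap C\neq\emptyset$ one may choose $c^\ast\in C\cap\mathrm{int}(\sigma C_j)$ and, for $T$ large, use an auxiliary ball $B(a+Tc^\ast,r_T)$ (centre deep inside $C$, radius $r_T$ chosen so that $p$ sits on its $(-\sigma)$-th $j$-face) to collapse the $j$-th coordinate of the current box to the single value $p_j$, keeping $p$ inside and the centre in $C$. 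Feeding in the primary balls $B(a,\cdot)$ together with finitely many such auxiliary balls produces a box $F_p\ni p$ with all centres in $C$ and with $F_p\cap V=\emptyset$; choosing $\bar a\in V$ so that its relevant coordinates match those of $p$ forces $p-\bar a$ into the appropriate coordinate subspace and exhibits $F_p=\bar\gamma F+\bar a$ for a suitable $\bar\gamma\ge\eta$ and a face $F$ of the polytope $P\in\mathcal{I}_k$ assembled from the collapsing directions.

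The main obstacle is precisely this last matching step: one must verify that the collapsing directions supplied by $p\in\mathrm{K}_C$, as $a$ ranges over the $k$-dimensional $V$, can be organised into at most $k$ tangent-cone-of-facet constraints that simultaneously separate the resulting box from $V$ and present it as a rescaled translate of a genuine face $F\in\mathrm{Faces}(P,V)^c$ of some $P\in\mathcal{I}_k$, with $\bar\gamma\ge\eta$. This is where the combinatorial data $\mathcal{S}_C$ and the dimension $k=\dim\mathrm{apex}(C)$ interact, and it is the only delicate point; once the box $F_p$ and its two descriptions (as a ball intersection and as $\bar\gamma F+\bar a$) are in hand, the homogeneity, finiteness, positivity, and retraction-stability assertions are comparatively mechanical.
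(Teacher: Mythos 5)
Your building blocks are the right ones, and in fact they coincide with the paper's: the primary balls $B(a,\sigma(p_j-a_j))$ centred at apex points, the auxiliary balls centred far out along a ray through some $c^*\in C\cap\mathrm{int}(\sigma C_j)$ (the paper packages their increasing unions as tangent cones to balls, $\mathrm{T}_{\bar z}B(z,R_1)$ and $p+H_{\sigma_i e_{j_i}}$), and the observation that a ball centred in $C$ is stable under any $1$-Lipschitz retraction onto a set containing $C$, which settles $(iii)$ exactly as the paper does. Your discussion of $\Delta$ (translation invariance, homogeneity, finiteness of $\mathcal{I}_k$, compactness of faces) is also correct. But the lemma is an existence statement, and the existence part is precisely what you do not prove: you describe the ``matching step'' --- organising the collapsing directions into at most $k$ constraints that simultaneously separate the box from $V=\mathrm{apex}(C)$, pin down a single point $\bar a\in V$, realise the box as $\bar\gamma F+\bar a$ with $F\in\mathrm{Faces}(P,V)^c$ for some $P\in\mathcal{I}_k$, and give $\bar\gamma\ge\eta$ --- and then declare it ``the main obstacle'' and ``the only delicate point'' without carrying it out. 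That step is the entire content of the lemma; everything you do establish is conditional on it.

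Concretely, what is missing is the induction on $j=0,\dots,k-1$ that the paper runs. At each stage one must choose $a\in A_j$ and a direction $q\in A_j$ (the paper does this via a case distinction on whether $A_j$ meets the relative interior of a facet of $Y_j=B(a,1)\cap D_j$, or only a face of codimension at least $2$), use $p\in\mathrm{K}_C$ to extract a pair $(j_i,\sigma_i)$ with $C\cap\mathrm{int}(\sigma_i C_{j_i})\neq\emptyset$, and --- this is the crux --- verify that the collapsed coordinate is new, i.e.\ $j_i\notin I(j)$, so that $\mathrm{aff}(G_{j+1})\cap A_j$ is a proper nonempty subset of $A_j$ and $\dim(A_{j+1})=\dim(A_j)-1$. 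Only this dimension count guarantees that after $k$ steps $A_k=\{\bar a\}$ is a single point, whence $F_p\cap V\subset D_k\cap V=\{\bar a\}$ can be excluded so that $d(F_p,V)>0$, that $\bar\gamma:=\|\bar a-p\|_\infty\ge\eta$, and that $P:=B(\bar a,\bar\gamma)\cap\bigcap_{l}G_l$ is a translated rescaling of a member of $\mathcal{I}_k$ (each $G_l$ is a cone with $\bar a$ in its apex, hence invariant under $x\mapsto\bar\gamma x+\bar a$). One further application of $p\in\mathrm{K}_C$ is then needed to produce $b\in C\cap\mathrm{int}(\bar a+\tau C_{n_0})$ with $n_0\notin I(k)$, yielding the last ball $B(b,\beta)$ for which $F_p=B(b,\beta)\cap P$ is a genuine face of $P$ containing $p$ and avoiding $V$. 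None of this is mechanical --- it is exactly where $\mathcal{S}_C$ and $k=\dim V$ interact, as you yourself note --- so the proposal has a genuine gap at its core.
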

In the proof of Lemma \ref{Lem:l22}, we shall, for given points $p$ and $q$ in $l_{\infty}^n$, consider
\[
\bigcup_{m \in \mathbb{N} \cap [n_0,\infty)} B(mq, \left\|mq - p \right\|_{\infty}) \subset l_{\infty}^n.
\]
It is not difficult to see that there is a threshold $n_0 \in \mathbb{N}$ as well as $\bar{q} \in \R q$ such that $\left\|mq - p \right\|_{\infty} = \left\|mq - \bar{q} \right\|_{\infty}$ for any $m \ge n_0$ and such that the sequence of balls $(B(mq, \left\|mq - p \right\|_{\infty}))_{m \in \mathbb{N} \cap [n_0,\infty)}$ is increasing. Altogether, this implies that the above union can be written as the tangent cone
\[
\mathrm{T}_{\bar{q}} B(n_0 q, \left\|n_0 q - p \right\|_{\infty} ).
\]
For a fixed point $p \in \mathrm{K}_C \setminus C$, we shall iterate in the proof below, the above observation as many times as the dimension $k$ of $\mathrm{apex}(C)$. Going from step $j$ to step $j+1$, we consider a particular increasing sequence of balls with centers on a line in $\mathrm{apex}(C)$ and whose union is the tangent cone $\mathrm{T}_{\bar{z}} B(z,R_1)$ as described above. Following an easy criterion described in the proof, we consider a corresponding sequence of balls centered on a ray in $C$ in the interior of a cone $\sigma_i C_{j_i} \in \mathcal{C}$ and once more, it follows as above, that their union can be written as a tangent cone to a ball, namely in this case $p + H _{\sigma_i e_{j_i}}$. These two tangent cones are defined in such a way that their intersection $G_{j+1}$ (which is then by definition an increasing union of intersection of balls centered in $C$) is $(n-1)$-dimensional. Hence, $\bigcap_{l=0}^{k} G_l$ is of a similar form and we shall show that 
\[
\mathrm{apex}(C) \cap \bigcap_{l=0}^{k} G_l = \{ \bar{a} \}.
\]
Finally, we shall consider the polytope $P := B(\bar{a},\bar{\gamma}) \cap \bigcap_{l=0}^{k} G_l$ which is a translated rescaling of a polytope in $\mathcal{I}_k$ (cf. Definition \ref{Def:d1}) and we shall show that $P$ has a face $p \in F_p $ which is disjoint from $\mathrm{apex}(C)$ and which can be written as a finite intersection of balls centered in $C$. In particular, $F_p$ is stable under any $1$-Lipschitz retraction of $l_{\infty}^n$ onto a subset containing $C$. Finally, note that if $C$ is injective (hence hyperconvex), then in particular $F_p \cap C \neq \emptyset$.

\begin{proof}[Proof of Lemma~\ref{Lem:l22}]
Fix $p \in \mathrm{K}_C$ such that $\eta := d(p,\mathrm{apex}(C)) > 0$. We set $A_0 := \mathrm{apex}(C)$, $G_0 := l_{\infty}^n$, $D_0 := l_{\infty}^n$. We continue inductively and define for $1 \le j+1 \le k$ the following
\[
A_{j+1} := \mathrm{apex}(C) \cap  \bigcap_{l=0}^{j+1} \mathrm{apex}(G_l) \ \  \text{ and } \ \ 
D_{j+1} := \bigcap_{l=0}^{j+1} \mathrm{aff}(G_l)
\]
as well as the sets $G_1,\dots,G_k$ along the following procedure: for each $0 \le j \le k-1$, choose arbitrarily  $a \in A_j$ and set $Y_j := B ( a, 1 ) \cap D_j$. Next, pick $q \in A_j$ such that the following hold:
\begin{enumerate}
\item If there is a facet $F$ of $Y_j$ such that $A_j \cap \mathrm{relint}(F) \neq \emptyset$, then $q \in A_j \cap \mathrm{relint}(F)$.
\item If for any facet $F'$ of $Y_j$, one has $A_j \cap \mathrm{relint}(F') = \emptyset$, then there is a face $F''$ of $Y_j$ with $\mathrm{dim}(F'') \le \mathrm{dim}(Y_j) - 2$ such that $A_j \subset \mathrm{aff}(F'' \cup \{a\})$ and then $q \in A_j \cap \mathrm{relint}(F'')$.
\end{enumerate}
It is not difficult to see that exactly one of these two cases occur. Let us now set $q^{m} := a + m (q - a)$ for $m \in \mathbb{N}$. There exists $m_1 > 0$ such that one can find $\mathfrak{I} := \{(j_1,\sigma_1),\dots,(j_N,\sigma_N)\} \subset I_n \times \{\pm 1\}$ so that $\left\|p - q^{m} \right\|_{\infty} = \sigma_i (p_{j_i} - q_{j_i}^{m})$ for $m \ge m_1$ if and only if $(j_i,\sigma_i) \in \mathfrak{I}$. Hence $p \in  q^{m} + [\bigcap_{(j_i,\sigma_i) \in \mathfrak{I} } \sigma_i C_{j_i} \setminus \bigcup_{(l,\tau) \notin \mathfrak{I}} \tau C_{l}] $. Since $p \in \mathrm{K}_C$ and $q^{m} \in \mathrm{apex}(C)$, it follows that there is some $(j_i,\sigma_i) \in \mathfrak{I}$ such that $w \in C \cap \mathrm{int}(\sigma_i C_{j_i}) \neq \emptyset$. We then set $w^{m} := a + m w \in C \cap [ a + \mathrm{int}(\sigma_i C_{j_i}) ]$. As we noted before the proof, one can find $z,\bar{z} \in a + \R (q - a)$, as well as $R_1>0$ and $m_2 \in \mathbb{N} \cap [m_1,\infty)$ such that:
\[
\mathrm{T}_{\bar{z}} B(z,R_1) = \bigcup_{m \ge m_2} B ( q^{m},  \left\|q^{m} - p \right\|_{\infty})
\]
and $v,\bar{v} \in a + \R w$ as well as $R_2>0$ such that
\[
p + H _{\sigma_i e_{j_i}} = \mathrm{T}_{\bar{v}} B(v,R_2) = \bigcup_{m \ge m_2} B ( w^{m},  \left\|w^{m} - p \right\|_{\infty}).
\]
We then set
\[
G_{j+1} := \mathrm{T}_{\bar{z}} B(z,R_1) \cap (p + H _{\sigma_i e_{j_i}})
\]
which is a face of $\mathrm{T}_{\bar{z}} B(z,R_1)$ and thus in particular a cone with 
\[
\mathrm{apex}(G_{j+1}) = \mathrm{apex}(\mathrm{T}_{\bar{z}} B(z,R_1)).
\]
By construction, we can define the re-indexing $1 \le f(j+1) := j_i \le n$ such that 
\[
\mathrm{aff}(G_{j+1}) = p + \partial H _{e_{f(j+1)}}.
\]
There is $I(j):= \{f(1),\dots,f(j)\} \subset I_n$ such that for any $x,y \in D_j$ and for any $f(l) \in I(j)$, $x_{f(l)} = y_{f(l)}$. Therefore, since for $m \ge m_2$ both $p$ and $q^{m}$ are in $D_j$ and $p \in  q^{m} + \sigma_i C_{j_i}$ it follows in particular that $j_i \notin I(j)$. Hence $q^{m} \notin \mathrm{aff}(G_{j+1})=p + \partial H _{e_{j_i}}$ and therefore $\emptyset \neq \mathrm{aff}(G_{j+1}) \cap A_j \neq A_j$. Now, it is easy to see that for $1 \le j+1 \le k$, one has:
\[
G_{j+1} \cap A_{j} = \mathrm{apex}(G_{j+1}) \cap A_{j} = \mathrm{aff}(G_{j+1}) \cap A_{j},
\]
$\mathrm{dim}(A_{j+1}) = \mathrm{dim}(A_{j})-1$ and $A_k=\{\bar{a}\} \subset \mathrm{apex}(C)$. We finally set $\bar{\gamma} := \left\| \bar{a}- p \right\|_{\infty} \ge \eta$ and 
\[
P := B(\bar{a},\bar{\gamma}) \cap \bigcap_{l=0}^{k} G_l.
\]
Similarly to what we have argued before, since $p \in \mathrm{K}_C$ there is $b \in C \cap \mathrm{int}(\bar{a} + \tau C_{n_0})$ where $n_0 \notin I(k)$ such that setting $\beta := \left\|b - p \right\|_{\infty}$ and
\[
Q := B(\bar{a},\bar{\gamma}) \cap D_k,
\]
one has that $\bar{F} := B(b, \beta) \cap Q$ is a facet of $Q$ in $D_k$. Setting finally $F_p := \bar{F} \cap P = B(b, \beta) \cap P$, it follows that $F_p$ has the desired properties, in particular it is a face of $P$ (Remark that $F_p = \bar{F} \cap P = (\mathrm{aff}(\bar{F}) \cap Q ) \cap P = \mathrm{aff}(\bar{F}) \cap P$ and there is a half-space $H$ of $D_{k}$ such that $\mathrm{rel} \partial H = \mathrm{aff}(\bar{F})$ and $P \subset Q \subset H$. Hence $F_p$ is a face of $P$ cf. \cite[Chapter~2]{Zie}) and note that $P$ is a translated rescaling (with parameters $\bar{a}$ and $\bar{\gamma}$) of a polytope in $\mathcal{I}_k$. This proves $(i)$.

Moreover, $d(F_p,\mathrm{apex}(C))$ is positive since 
\[
F_p \cap \mathrm{apex}(C) = F_p \cap D_k \cap \mathrm{apex}(C) = F_p \cap \{ \bar{a} \} = \emptyset.
\]
The rest of $(ii)$ is easily seen to hold. Indeed, $\Delta(0,1)$ is positive since $\mathcal{I}_k$ is a finite set and thus up to rescaling and translation along points of $\mathrm{apex}(C)$, there are only finitely many different intersections of a hyperplane of the form $p + H _{\sigma_i e_{j_i}}$ with a tangent cone to a ball like $\mathrm{T}_{\bar{z}} B(z,R_1)$ and thus there are only finitely many different outcomes for the sets $G_1,\dots,G_k$ depending only on the dimension of $l_{\infty}^n$ and independently of the particular $C$.

Since $P$ is bounded and looking at the definition of the sets $G_1,\dots,G_k$; it is clear that the set $P$ can be expressed as an intersection of closed balls centered in $C$ that are pairwise intersecting and note that such balls are stable under $r$ as given in $(iii)$. This finally concludes the proof of the Lemma.
\end{proof}

To illustrate Lemma \ref{Lem:l22}, consider again the case where 
\[
C = C_n^{\eps} := \{ x \in \R^n : x_n \ge \max_{i \in I_n \setminus \{ n \} } (1+\eps)| x_i | \} \subset \mathrm{int}(C_n) \cup \{ 0 \}
\]
for some $\eps >0$ and consequently $\mathrm{apex}(C)=\{0\}$, $\mathcal{S}_C=\mathcal{C} \setminus \{C_n \}$ and $\mathrm{K}_C=C_n$. For any $p = (p_1,\dots,p_n) \in C_n \setminus C_n^{\eps}$, one has 
%$\left\| p \right\|_{\infty} = |p_n|$ and 
\[
F_p=B(0,\left\|p \right\|_{\infty})\cap (p+H_{e_n})= \{x \in l_{\infty}^n : \left\|x \right\|_{\infty} = \left\|p \right\|_{\infty} \text{ and } x_n = p_n \}.
\]
Now, in the case $C = C_n^{\eps} + \R e_{n-1}$, one has $\mathrm{apex}(C) = \R e_{n-1}$, $\mathrm{K}_C = C_n + \R e_{n-1}$. For any $p = (p_1,\dots,p_n) \in \mathrm{K}_C \setminus C$, one has with $\bar{p} := (0,\dots,0,p_{n-1},0)$:
\begin{align*}
F_p &= \partial(p+H_{e_{n-1}}) \cap B(\bar{p},p_n) \cap (p+H_{e_n}) \\
&= \{x \in l_{\infty}^n : \left\|x - \bar{p} \right\|_{\infty} = p_n, \ x_{n-1} = p_{n-1} \text{ and } x_n = p_n \}.
\end{align*}
\section{Injective Convex Polyhedral Cones}
We shall also make use in the next lemma of the observation we made before the proof of Lemma \ref{Lem:l22}.
\begin{Lem}\label{Lem:l3}
Let $C \subset l_{\infty}^n$ be an injective convex polyhedral cone with non-empty interior such that 
for any $F \in \mathrm{Facets}^*([-1,1]^{n},C)$, $-F \in \mathrm{Facets}^*([-1,1]^{n},C)$ as well. Then $C = \R^n$.
\end{Lem}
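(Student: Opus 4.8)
The statement asserts a rigidity phenomenon: if an injective convex polyhedral cone $C$ with nonempty interior has the property that its set of ``touched'' facets $\mathrm{Facets}^*([-1,1]^n,C)$ is symmetric under $F \mapsto -F$, then $C$ must be all of $\R^n$. The natural strategy is contrapositive in spirit: assuming $C \subsetneq \R^n$, I would use injectivity (equivalently hyperconvexity) together with the symmetry hypothesis to derive a contradiction. The key observation to exploit is that, by the discussion surrounding $\mathcal{S}_C$ in Section~\ref{sec:s5}, the symmetry condition on $\mathrm{Facets}^*([-1,1]^n,C)$ translates directly into a symmetry of the collection $\mathcal{S}_C = \{C' \in \mathcal{C} : \mathrm{int}(C') \cap C = \emptyset\}$, namely $C' \in \mathcal{S}_C \iff -C' \in \mathcal{S}_C$. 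This means the index set $I^1$ in the proof of Lemma~\ref{Lem:l1} is empty, so every coordinate direction is either ``free'' (both $\pm C_j$ meet $\mathrm{int}(C)$) or ``symmetrically blocked'' ($j \in I^2$, i.e. $\{C_j,-C_j\} \subset \mathcal{S}_C$).

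\textbf{Key steps.} First I would record the identity $\mathrm{K}_C = C$ under this hypothesis, or more precisely that $\mathrm{K}_C$ is symmetric and hence a linear subspace-like object. Concretely, since $I^1 = \emptyset$, the defining intersection $N_C = \bigcap_{(i,\sigma)\in I^2 \times \{\pm 1\}} N_{(i,\sigma)}$ from \eqref{eq:e1111111} is symmetric under $x \mapsto -x + 2a$ around apex points, forcing $\mathrm{K}_C$ to coincide with an affine subspace structure. Second, I would show that this symmetry is incompatible with $C$ being a proper injective cone of full dimension unless $C$ itself is already symmetric, i.e. $C = -C$, which for a convex cone forces $C$ to be a linear subspace. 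But a linear subspace with $\mathrm{int}(C) \neq \emptyset$ must be $\R^n$. The cleanest route is: the symmetry of $\mathcal{S}_C$ gives $\mathrm{int}(C') \cap C = \emptyset \iff \mathrm{int}(-C') \cap C = \emptyset$, and one shows this pins down $C$ to contain both a direction and its negative for every extreme ray, yielding $C = -C$.

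\textbf{Alternative via hyperconvexity.} A more self-contained route uses the observation recalled before Lemma~\ref{Lem:l22}: if $C \subsetneq \R^n$ with $\mathrm{int}(C)\neq\emptyset$ is injective, then $\mathrm{K}_C \setminus C$ is nonempty (one can produce a point $p$ outside $C$ lying in a finite intersection of balls centered in $C$ whose intersection misses $\mathrm{apex}(C)$), and by Lemma~\ref{Lem:l22}$(iii)$ together with injectivity of $C$ one gets $F_p \cap C \neq \emptyset$. I would argue that the full symmetry of $\mathrm{Facets}^*([-1,1]^n,C)$ makes every such ball-intersection witness appear together with its mirror image, and combining a witness with its reflection produces a family of balls violating hyperconvexity of $C$ (the intersection of balls is nonempty but avoids $C$), exactly as in the mechanism of Example~\ref{Expl:expl1}. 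The contradiction with injectivity forces $C = \R^n$.

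\textbf{Main obstacle.} The delicate point is translating the purely combinatorial symmetry hypothesis on facets into a usable geometric statement about $C$ itself, since the facets of $[-1,1]^n$ record only which cones $\pm C_j$ meet $\mathrm{int}(C)$ and not the finer facial structure of $C$; bridging from $I^1 = \emptyset$ to the conclusion $C = -C$ (and hence $C = \R^n$) is where the real work lies, and I expect this to hinge on showing that a proper full-dimensional cone necessarily has some \emph{asymmetrically} blocked direction, i.e. $I^1 \neq \emptyset$.
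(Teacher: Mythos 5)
Your reformulation of the hypothesis is correct (the symmetry of $\mathrm{Facets}^*([-1,1]^{n},C)$ is exactly the symmetry of $\mathcal{S}_C$, i.e. $I^1=\emptyset$ in the notation of Lemma~\ref{Lem:l1}), and your instinct that hyperconvexity must be played against this symmetry is also right. But both of your routes stop precisely at the point where the content of the lemma lies, and what stands in place of that step is either unproved or false. In Route~1, the passage ``one shows this pins down $C$ to contain both a direction and its negative for every extreme ray, yielding $C=-C$'' \emph{is} the lemma (up to the trivial last step), and no mechanism is offered for it; note that it cannot follow from the symmetry of $\mathcal{S}_C$ alone, since $C=H_{(1,1,1)}=\{x\in\R^3:x_1+x_2+x_3\ge 0\}$ is a proper full-dimensional cone with $\mathcal{S}_C=\emptyset$ (hence $I^1=\emptyset$) and $C\neq-C$; so injectivity must enter, and you never say how. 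For the same reason your closing expectation, that a proper full-dimensional cone necessarily has $I^1\neq\emptyset$, is false as stated. The intermediate assertion that point-symmetry of $N_C$ about apex points forces $\mathrm{K}_C$ ``to coincide with an affine subspace structure'' also does not follow: a cone can be symmetric about a point without being anything like an affine subspace. In Route~2, the opening claim that $C\subsetneq\R^n$ injective with nonempty interior implies $\mathrm{K}_C\setminus C\neq\emptyset$ is contradicted by the paper's own examples after Lemma~\ref{Lem:l1} ($C=H_{e_n}$ and $C=C_n$ are injective, proper, and satisfy $\mathrm{K}_C=C$), and the decisive construction, a pairwise intersecting family of balls centered in $C$ whose total intersection avoids $C$, is only announced (``I would argue that\dots''), not produced.

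For comparison, the paper supplies exactly this missing bridge, and it uses hyperconvexity positively rather than by contradiction. Writing $I\subset I_n$ for the set of $j$ with $C\cap\mathrm{int}(\pm C_j)\neq\emptyset$ (here the symmetry hypothesis enters: both signs occur together), it shows that for every $x\in\R^n$ and every $i\in I$ the hyperplane $x+\partial H_{e_i}$ is an increasing union of intersections of pairs of balls centered in $C$, one family of centers running into $\mathrm{int}(C)\cap\mathrm{int}(C_i)$ and the mirror family into $\mathrm{int}(C)\cap\mathrm{int}(-C_i)$. Finitely many such balls all contain $x$, hence pairwise intersect, so hyperconvexity of $C$ produces a point of $C$ in $\bigcap_{i\in I}(x+\partial H_{e_i})$; that is, $C$ meets every fiber of the coordinate projection $\pi$ onto $\R^{|I|}$. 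The maximality of $I$ (no $\sigma C_l$ with $l\notin I$ meets $C$) then forces $\pi(\mathrm{apex}(C))=\R^{|I|}$ and $\pi|_C$ to be injective, and $\mathrm{int}(C)\neq\emptyset$ gives $|I|=n$, i.e. $C=\R^n$. This fiber argument is the concrete realization of your phrase ``combining a witness with its mirror image''; without it, or something equivalent, your proposal has a genuine gap.
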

\begin{proof}
By assumption there is a subset $I \subset I_n=\{1,\dots,n\}$ such that
\begin{equation}\label{eq:e1}
C \cap  \mathrm{int}(\sigma C_j) \neq \emptyset \ \text{ if and only if } \ (j,\sigma) \in I \times \{ \pm 1 \}.
\end{equation}
Let us assume for simplicity that $I = \{1,\dots,k\}$ with $I := \emptyset$ if $k=0$. Note that by $\eqref{eq:e1}$, for any $i \in I$ and any $x \in \R^n$ there is $(u^i,v^i) \in [\mathrm{int}(C) \cap \mathrm{int}(C_i)] \times [\mathrm{int}(C) \cap \mathrm{int}(-C_i)]$ such that $m u^i + x \in \mathrm{int}(C) \cap \mathrm{int}(C_i)$ and $m v^i + x \in \mathrm{int}(C) \cap \mathrm{int}(-C_i)$ for any $m \in \mathbb{N}$ as well as:
\[
x + \partial H_{e_{i}} = \bigcup_{m \in \mathbb{N}}  B(m u^i + x ,\left\| m u^i \right\|_{\infty}) \cap \bigcup_{m \in \mathbb{N}}  B(m v^i + x ,\left\| m v^i \right\|_{\infty})
\]
(where $H_{\nu}$ is defined in Section \ref{sec:s5}). Setting $ U_m^i + x := B(m u^i + x ,\left\| m u^i \right\|_{\infty})$ and $ V_m^i + x :=  B(m v^i + x ,\left\| m v^i \right\|_{\infty})$, we obtain
\[
\bigcap_{i \in I} (x + \partial H_{e_{i}}) = \bigcap_{i \in I} \left( \bigcup_{m \in \mathbb{N}} [U_m^i + x] \cap \bigcup_{m \in \mathbb{N}} [V_m^i + x] \right).
\]
It follows that there are $m_1,\dots,m_k, n_1,\dots,n_k \in \mathbb{N}$ such that
\[
x \in \bigcap_{i \in I} \left(  [U_{m_i}^i + x] \cap [V_{n_i}^i + x] \right) =: S \subset \bigcap_{i \in I} (x + \partial H_{e_{i}}).
\]
Note that $S$ is an intersection of closed balls with centers in $C$ and pairwise intersecting in $l_{\infty}^n$ (since they all contain $x$), hence $S \cap C \neq \emptyset$ by hyperconvexity of $C$. We then deduce
\begin{equation}\label{eq:e2}
\Bigl( \{ x_1 \} \times \cdots \times \{ x_k \} \times \R^{n-k} \Bigr) \cap C \neq \emptyset
\end{equation}
for any $\{x_i\}_{i \in I} \subset \R$. Set 
\[
\pi := \widehat{\pi}_{k+1} \circ \cdots \circ \widehat{\pi}_{n}
\]
with $\pi \equiv 0$ if $k = 0$ and $\pi := \mathrm{id}_{\R^n}$ if $k = n$. From \eqref{eq:e2}, it follows $\pi(C) = \R^k$. Assume now by contradiction that $\pi(\mathrm{apex}(C)) \neq  \R^k$.  Pick $p\in C$ such that $\pi(p) \notin \pi(\mathrm{apex}(C))$ and pick $q \in C \cap \pi^{-1}(\{-\pi(p)\})$. Remark that setting $z := q + p \in C \setminus \mathrm{apex}(C)$ one has $z \neq 0$ and $\pi(z) = 0$. Hence $\max_{1 \le j \le k} |z_j|=0 <  \max_{k+1 \le j \le n} |z_j|$ thus $\max_{1 \le j \le k} |z_j| < \left\|z \right\|_{\infty}$ and therefore $z \notin \cup_{1 \le j \le k} \left[ C_j \cup (-C_j) \right]$. Since $\mathrm{int}(C) \neq \emptyset$ it follows that $C \cap \mathrm{int}(\sigma C_l) \neq \emptyset$ for some $(l,\sigma) \notin I \times \{ \pm 1 \}$ which contradicts \eqref{eq:e1}. Thus $\pi(\mathrm{apex}(C)) =  \R^k$. Hence, for any $y \in \R^k$, there is $w \in \mathrm{apex}(C)$ such that $\pi(w) = y$. Assume now  by contradiction that there is $w' \in C$ such that $\pi(w') = y$ and $w' \neq w$. Then $z := w' - w \in C \setminus \{ 0\}$ satisfies $\pi(z) = 0$ thus $\max_{1 \le j \le k} |z_j| = 0 < \max_{k+1 \le j \le n} |z_j|$ and this as before contradicts \eqref{eq:e1}. It follows that $\pi \colon C \rightarrow \R^k$ is injective. By definition of $\pi$ and since $\mathrm{int}(C) \neq \emptyset$, we deduce that $k = n$ thus $C = \R^n$. This proves the Lemma.
\end{proof}

Let $d_H(A,B)$ denote the Hausdorff distance of two subsets $\emptyset \neq A,B \subset l_{\infty}^n$, in other words
\[
d_H(A,B) := \inf \{ r \in (0,\infty) : A \subset N(B,r) \text{ and } B \subset N(A,r) \} \in [0,\infty]
\]
with $\inf \emptyset := \infty$.

The strategy to show (in the proof of Theorem~\ref{Thm:t2}) that $(i)$ and $(ii)$ imply the injectivity of $C$ is to construct a $1$-Lipschitz retraction $r$ of $\mathrm{K}_C$ onto $C$. In order to do so, we shall consider an increasing sequence $(l \alpha q+C)_{l \in \mathbb{N}}$ of translates of $C$ along $\R q$ with $\alpha>0$. The direction $q$ is chosen such that $-q \in \mathrm{int}(C)$, in order that $C \subset l \alpha q +C$ and $\cup_{l \in \mathbb{N}}(l\alpha q+C)=\R^n$. Moreover, $q$ is chosen so that for a facet $F$ of $[-1,1]^n=B(0,1)\subset l_{\infty}^n$ such that $F \notin \mathrm{Facets}^*([-1,1]^{n},C)$ and $-F \in \mathrm{Facets}^*([-1,1]^{n},C)$, one has $q \in \mathrm{relint}(F)$ which implies $d(q+\mathrm{apex}(C),\mathrm{K}_C) > 0$. We shall define $r$ as the composition $r_2 \circ r_1$ of two $1$-Lipschitz retractions. The points of $\mathrm{K}_C \setminus C$ that have distance to $\mathrm{apex}(C)$ greater than a fixed constant will be mapped by $r_1$ to $C$. The purpose of $r_2$ is then to map the points situated in a neighborhood of $\mathrm{apex}(C)$ but which are outside $\mathrm{apex}(C)$, onto $C$. 

Starting with the definition of $r_1$, we shall let $r^l$ be the composition of retractions onto the tangent cones of $l\alpha q+C$ that are different from $l\alpha q+C$ itself and we shall let $r_1$ be the inverse limit of the system $(r^{l})_{l \in \mathbb{N}}$, similarly to the proof of Theorem~\ref{Thm:t00001}. After that, we shall define $r_2$ as the pointwise limit of the composition of a system of $1$-Lipschitz retractions $(\rho^{k})_{k \in \mathbb{N}}$. The map $\rho^{k}$ will be the composition of a fixed number of $1$-Lipschitz retractions $\rho^{k,l}$ defined (similarly as $r^l$ above) as the composition of retractions onto the tangent cones of $l \frac{\alpha q}{2^k}+C$ (different from $l \frac{\alpha q}{2^k}+C$ itself).  

To prove that $r := r_2 \circ r_1$ is the desired map,
%s the desired $1$-Lipschitz retraction of $\mathrm{K}_C$ onto $C$
we shall note that the $1$-Lipschitz retractions used to define $r$ are all $1$-Lipschitz retractions of $l_{\infty}^n$ onto a set containing $C$. Lemma~\ref{Lem:l22} provides for any $p \in \mathrm{K}_C \setminus C$ a polytope $F_p$ containing $p$, stable under $r$ and such that $F_p \cap \mathrm{apex}(C) = \emptyset$. In particular, $r$ induces a $1$-Lipschitz retraction of $F_p$ onto $F_p \cap C$. To show that the image of $r$ is exactly $C$, we shall consider in a particular neighborhood of $\mathrm{apex}(C)$, an arbitrary point $p \in C_{k,l_0+1} \cap (\mathrm{K}_C \setminus C_{k,l_0})$ where $C_{k,l_0} = l_0 \frac{\alpha q}{2^k}+C$ and consider the map $\rho^{k,l_0}$ which consists of the composition of every $1$-Lipschitz retraction onto the tangent cones of $C_{k,l_0}$ (different from $C_{k,l_0}$ itself). We shall show that there is a ball $U(p_0,\delta_{p_0})$ containing $p$ and centered in $C_{k,l_0}$ such that
\[
U(p_0,\delta_{p_0} ) \cap C_{k,l_0} = U (p_0,\delta_{p_0} ) \cap \mathrm{T}_{p_0} C_{k,l_0}.
\]
This step is similar to an argument in the proof of Theorem~\ref{Thm:t00001} with the key difference that it is here important that $p_0 \notin \mathrm{apex}(C_{k,l_0})$, in order that $ C_{k,l_0} \subsetneq \mathrm{T}_{p_0} C_{k,l_0}$ and by definition of $\rho^{k,l_0}$ that consequently $\rho^{k,l_0}(p) \subset C_{k,l_0}$. We can repeat this procedure until $l_0=0$ to obtain $\rho^{k}(p) \in C$. 

We shall use indifferently the notation $[-r,r]^{n}$ and $B(0,r)$ in the following proof since both denote the same subset of $l_{\infty}^n$.

\begin{proof}[Proof of Theorem~\ref{Thm:t2}]
If $C$ is injective, we know by Theorem \ref{Thm:t00001} that its tangent cones are all injective. Furthermore, $(ii)$ follows from Lemma \ref{Lem:l3}.

Assume now that $(i)$ and $(ii)$ hold. Pick a facet $F$ of $[-1,1]^{n}$ such that $F \notin \mathrm{Facets}^*([-1,1]^{n},C)$ as well as $-F \in \mathrm{Facets}^*([-1,1]^{n},C)$ and pick $q \in \mathrm{relint}(F)$ such that $-q \in \mathrm{relint}(-F) \cap \mathrm{int}(C)$. Remark that 
\[
\mathrm{int}([0,\infty)F) + \mathrm{apex}(C)  \subset \R^n \setminus \mathrm{K}_C.
\]
For $R > 0$, set 
\[
\Sigma_R:= \mathrm{K}_C \cap \bigl[ B(0,R) + \mathrm{apex}(C) + [0,\infty)q \bigr]^c.
\]
Let us define the map $\bar{\Delta} \colon \mathrm{apex}(C) \times (0,\infty) \to \R$ by
\begin{equation}\label{eq:e6}
\bar{\Delta}(a,\gamma) := \min_{P \in \mathcal{I}_k} \min_{F' \in \mathrm{Faces}(P,\mathrm{apex}(C))^c} d((\gamma F' + a)  \cap \mathrm{K}_C, [0,\infty)q + \mathrm{apex}(C)).
\end{equation}
where $k := \mathrm{dim}(\mathrm{apex}(C))$. It is easy to see with the help of Lemma \ref{Lem:l22} that $\eps := \bar{\Delta}(0,1) > 0$ and thus by rescaling
\begin{equation}\label{eq:e3}
\bar{\Delta}(a,\kappa) = \bar{\Delta}(0,\kappa) = \kappa \bar{\Delta}(0,1) = \kappa \eps.
\end{equation}
Furthermore, there is $\bar{\eps} \in (0,\eps)$ such that $C \cup \bigcup_{p \in \partial C} U(p,\bar{\eps}) \subset C \cup \bigcup_{p \in \partial C} U(p,\eps_p)$ where for any $p \in \partial C$, we set
\begin{equation}\label{eq:e33}
\eps_p := \sup \{  \delta \in (0,\eps): U(p,\delta) \cap \mathrm{T}_pC = U(p,\delta) \cap C \},
\end{equation}
cf. proof of Theorem \ref{Thm:t00001}. Let us then choose $\alpha \in [0,\infty)$ such that
\begin{equation}\label{eq:e4}
d_H(C,\alpha q+C) < \bar{\eps}/2.
\end{equation}
Since by definition, one has $[0,\infty)q + C = l_{\infty}^n$, there is $m \in \mathbb{N}$ so that
\[
B(0,1) + \mathrm{apex}(C) \subset m \alpha q + C
\]
which after rescaling becomes 
\begin{equation}\label{eq:e5}
B(0,1/2^k) + \mathrm{apex}(C) \subset \frac{m \alpha q}{2^k} + C.
\end{equation}
Let $\{T_j\}_{j \in \{1,\dots,N\}}$ be an enumeration of the set:
\[
\bigl \{ \mathrm{T}_{p}(C) : \text{there is } F \in \mathrm{Faces}(C) \setminus \{ \mathrm{apex}(C)\} \text{ such that } p \in \mathrm{relint}(F) \bigr \}.
\]
If for each $j \in \{1,\dots,N\}$, we pick a $1$-Lipschitz retraction $\rho_j \colon l_{\infty}^n \to T_j$, then $\rho := \rho_N \circ \cdots \circ \rho_1$ defines a $1$-Lipschitz retraction of $\alpha q+C$ onto $C$, cf. proof of Theorem \ref{Thm:t00001}. Let us now for $y \in X$ denote by $\tau_y$ the translation map $ x \mapsto x + y$. For $l \in \mathbb{N}$, the map 
\[
r^{l} := \tau_{l \alpha q} \circ \rho \circ \tau_{-l \alpha q}
\]
is a $1$-Lipschitz retraction of $(l+1) \alpha q+C$ onto $l \alpha q+C$. We then define
\[
r_1(x) := (r^0 \circ r^1 \circ \cdots \circ  r^M)(x) 
\]
where $M$ is the smallest natural such that $x \in M\alpha q + C$. Similarly, for any $j \in \{1,\dots,N\}$, $k \in \mathbb{N} \cup \{0 \}$ and $l \in \{0,\dots,m\}$, we set 
\[
\rho^{k,l} : = \tau_{l \alpha q / 2^{k}  } \circ \rho \circ \tau_{-l \alpha q/ 2^{k}}
\]
as well as
\[
\rho^{k} : = \rho^{k,0} \circ \cdots \circ \rho^{k,m}.
\]
We then define
\[
r := r_2 \circ r_1
\]
by setting for any $y \in r_1(\mathrm{K}_C)$:
\[
r_2(y) := \lim_{k \rightarrow \infty} (\rho^k \circ \rho^{k-1} \cdots \circ \rho^1)(y).
\]

We shall now show that $r$ is well-defined, $r|_{C} = \mathrm{id}_C$ and $r \in \mathrm{Lip}_1(\mathrm{K}_C,C)$. This implies that $C$ is injective by Lemma \ref{Lem:l1}. Consider first $R \in (1/2^{k+1},1/2^{k}]$ with $k \in \mathbb{N} \cup \{0 \}$ and let $p \in \mathrm{K}_C$ be a point at distance $R$ from $\mathrm{apex}(C)$. Borrowing its notation, we can by Lemma \ref{Lem:l22} find a corresponding $F_p$ containing $p$ such that by \eqref{eq:e6} and \eqref{eq:e3}, one has
\begin{equation}\label{eq:e111}
F_p \cap \mathrm{K}_C \subset \Sigma_{\frac{\eps}{2^{k+1}}}.
\end{equation}
Assume that $p \notin C$. Note that by $(iii)$ in Lemma \ref{Lem:l22} and since it is easy to see that $r(\mathrm{K}_C) \subset \mathrm{K}_C$, one has
\[
r(F_p \cap \mathrm{K}_C) \subset F_p \cap \mathrm{K}_C.
\]
Let us set $C_{k,l} := l \frac{\alpha q}{2^k} + C$ for any $l \in \{0, \dots, m\}$. By \eqref{eq:e5}, there is then $l_0 \in \{0, \dots, m-1\}$ such that $p \in C_{k,l_0+1} \setminus C_{k,l_0}$ since $p$ was chosen so that
\[
p \in \partial [B(0,R) + \mathrm{apex}(C)] \cap \mathrm{K}_C 
\subset B(0,1/2^{k}) + \mathrm{apex}(C).
\]
It follows by \eqref{eq:e4} that
\[
d_H(C_{k,l_0},C_{k,l_0+1})= d_H(C,C_{k,1}) = \frac{1}{2^{k}} d_H(C,\alpha q+C) < \frac{\bar{\eps}}{2^{k+1}}.
\]
Therefore, noting that if $z \in C_{k,l_0}$ then $\sigma(z) := 2^{k} \left( z - l_0 \frac{\alpha q}{2^{k}} \right)  \in C$, one sees (cf. \eqref{eq:e33} for the definition of $\eps_{\sigma(z)}$) that 
\[
p \in \bigcup_{z \in \partial C_{k,l_0}} U (z,\bar{\eps}/2^{k+1}) 
\subset \bigcup_{z \in \partial C_{k,l_0}} U (z, \eps_{\sigma(z)}/2^{k} ).
\]
Hence, there is $p_0 \in \partial C_{k,l_0}$ such that $p \in U(p_0,\delta_{p_0})$, $\delta_{p_0} < \frac{\bar{\eps}}{2^{k+1}}$ and
\[
U ( p_0,\delta_{p_0} ) \cap C_{k,l_0} = U (p_0,\delta_{p_0} ) \cap \mathrm{T}_{p_0} C_{k,l_0}.
\]
From $\delta_{p_0} < \frac{\bar{\eps}}{2^{k+1}}$ and $\bar{\eps} < \eps$, it follows that $p_0 \notin \mathrm{apex}(C_{k,l_0})$ because by \eqref{eq:e111}:
\[
d(F_p \cap \mathrm{K}_C, \mathrm{apex}(C_{k,l_0})) \ge
d(F_p \cap \mathrm{K}_C, [0,\infty)q + \mathrm{apex}(C)) \ge 
\frac{\eps}{2^{k+1}} >
\delta_{p_0}.
\]
There is then $j \in \{1, \dots, N\}$ such that $\mathrm{T}_{p_0} C_{k,l_0} = l_0 \frac{\alpha q}{2^k} + T_j$. Hence $\rho^{k,l_0}(p) \in C_{k,l_0}$ and thus $\rho^k(p) \in C$.

The case where $p \in \mathrm{K}_C$ is a point at distance $R \ge 1$ from $\mathrm{apex}(C)$ is similar. It follows that $r$ is well-defined and it is then obviously a $1$-Lipschitz retraction onto $C$. This finally concludes the proof.
\end{proof}

%Property of Mael Pavon, ETH ZUERICH

\section{Graph Representation of Linear Systems of Inequalities with at most Two Variables per Inequality}

Let $\emptyset \neq Q \subset \R^n$ be an intersection of general half-spaces, that is half-spaces that are either closed or open. To a general half-space $H$ containing $Q$, we assign its inner normal vector $\nu \in \R \setminus\{ 0 \}$ in order that there is $p \in \R^n$ such that $H = p + H_\nu$ if $H$ is closed and $H = p + \mathrm{int}(H_\nu)$ if $H$ is open (recalling that $H_{\nu} := \{ x \in \R^n : x \cdot \nu \ge 0 \}$). For $n \in \mathbb{N}$, let us denote by $\mathcal{Z}_n$ the family of every $Q$ so that there is a set $\mathcal{N}(Q) \subset \R \setminus\{ 0 \}$ such that the following hold:
\begin{enumerate}[$(a)$]
\item $\mathcal{N}(Q)$ is finite and $Q$ can be written as the intersection over all $\nu \in \mathcal{N}(Q)$ of a general half-space with inner normal vector $\nu$.
\item For every $\nu \in \mathcal{N}(Q)$, there exist $f_{\nu},g_{\nu} \in \{0\} \cup \{e_1,\dots,e_n\}$ and $a_{\nu},b_{\nu} \in \R$ so that $f_{\nu} \neq g_{\nu}$ as well as
\[
\nu = a_{\nu} f_{\nu} + b_{\nu} g_{\nu}.
\]
\end{enumerate}
We now describe a construction that is introduced in \cite{Sho}. Every $Q \in \mathcal{Z}_n$ is the solution set of a linear system of inequalities of the form 
\[
\Sigma := \{ a_\nu y_{\nu} + b_{\nu} z_{\nu}  \succeq c_{\nu} \}_{\nu \in \mathcal{N}(Q)}
\]
where $\succeq$ stands for $\ge$ in some inequalities and possibly for $>$ in some others and $y_{\nu},z_{\nu} \in \{ x_0,x_1,\dots,x_n\}$ denote variables so that $y_{\nu} = x_i$ if $f_{\nu} = e_i$ as well as $z_{\nu} = x_j$ if $g_{\nu} = e_j$ and $y_{\nu} = x_0$ if $f_{\nu} = 0$. Conversely, to any system of linear inequalities as above, we can associate an element of $\mathcal{Z}_n$. Now, we can require all variables appearing in $\Sigma$ to have nonzero coefficients except the \textit{zero variable} $x_0$ which we additionally require to appear only with coefficient zero. We can associate to $\Sigma$ an undirected labeled multigraph without self-loops $\Gamma_{\Sigma} := (\mathrm{V}_{\Sigma},\mathrm{E}_{\Sigma})$ where the vertex set $\mathrm{V}_{\Sigma}$ is given by $\{x_0,x_1,\dots,x_n\}$ and the set $\mathrm{E}_{\Sigma}:=\{ E_{\nu} \}_{\nu \in \mathcal{N}(Q) } $ consists of all the labeled edges $E_{\nu} = \bigl( \{ y_{\nu}, z_{\nu} \},\Sigma_{\nu} \bigr) $ where $\Sigma_{\nu}$ denotes the inequality $ a_\nu y_{\nu} + b_{\nu} z_{\nu}  \succeq c_{\nu} $.
Note that $\Gamma_{\Sigma}$ does not contain any self-loop since we require $y_{\nu} \neq z_{\nu}$, that is all equations in $\Sigma$ contain two different variables. Equations that contain only one variable different from $x_0$ are given by edges connecting to $x_0$ and remark that $\Sigma$ does not contain any trivial inequalities like $1\ge 0$ or $-1/3>0$. A \textit{path} $P$ in $\Gamma_{\Sigma}$ is then given by 
\begin{equation}\label{eq:equation1}
\bigl( (v_1,\dots,v_{m+1}), E_1,\dots,E_m \bigr )
\end{equation}
where $ (v_1,\dots,v_{m+1})$ is a sequence of vertices in $\mathrm{V}_{\Sigma}$ and $(E_1,\dots,E_m)$ a sequence of labeled edges in $\mathrm{E}_{\Sigma}$ such that for each $l \in \{1,\dots,m\}$, one has:
\[
E_l= \left( \{ v_l,v_{l+1} \} ,  a_l v_l + b_l v_{l+1}  \succeq c_l  \right).
\]
We call $P$ \textit{admissible} if for each $l \in \{1,\dots,m-1\}$, the coefficients $b_l$ and $a_{l+1}$ have opposite signs (i.e., one is strictly positive and the other one is strictly negative). Note that if $P$ is admissible, one has $v_l \neq x_0$ for each $l \in \{2,\dots,m-1\}$ because we have required that $x_0$ appears only with zero coefficient. Admissible paths correspond to sequences of inequalities that form transitivity chains, the three inequalities $ 2 x_1 - 3 x_2 > - 4$, $ 2 x_2 + x_3 \ge 4 $ and $-x_3 -x_1 \ge 0$ give e.g. rise to an admissible path. However, the three inequalities $x_1 - x_2 \ge 0$, $x_2 - x_3 \ge 0$ and $-x_3 - x_4 \ge 0$ cannot label an admissible path since the coefficients of $x_3$ have the wrong relative signs. A path is called a \textit{loop} if its first and last vertices are identical and a loop is said to be \textit{simple} as soon as its intermediate vertices are distinct. The reverse of an admissible loop is admissible and cyclic permutations of a loop $P$ given by \eqref{eq:equation1} are admissible if and only if $a_1$ and $b_m$ have opposite signs, in which case $P$ is called \textit{permutable}. Note also that since $x_0$ appears in $\Sigma$ only with zero coefficient, no admissible loop with initial vertex $x_0$ is permutable.

For an admissible path $P$ given again by \eqref{eq:equation1}, let us define the \textit{residue inequality} of $P$ to be the inequality obtained by applying transitivity to the inequalities labeling the edges of $P$. The residue inequality of $P$ is thus of the form $a v_1 + b v_{m+1} \succeq c$, where $\succeq$ denotes a strict inequality if and only if at least one of the inequalities labeling the edges of $P$ is strict. Consider for example a path $P$ given by 
\begin{align*}
\Bigl( (x_1,x_2,x_3,x_4), &\left( \{ x_1,x_2 \} ,  x_1 -2x_2  \ge -1   \right),\\
&\left( \{ x_2,x_3 \} ,  x_2 + 3x_3  > -2  \right), \\
&\left( \{ x_3,x_4 \} ,  -x_3  -x_4  \ge 0  \right) \Bigr ),
\end{align*}
we have $x_1 > -1 + 2 ( -2 - 3 x_3) = -5 - 6 x_3 \ge -5 + 6 x_4$ and thus the residue inequality of $P$ is $ x_1 - 6 x_4 > -5$. In the case where $P$ is a loop with initial vertex $v$, its residue inequality is of the form $(a + b) v \succeq c$. If it happens that $(a + b) v > c$, $a + b=0$ and $c \ge  0$ or $(a + b) v \ge c$, $a + b=0$ and $c >  0$, the residue inequality of $P$ is false and we say that $P$ is an \textit{infeasible} loop. Note in particular that infeasibility implies admissibility. We define a \textit{closure} $\overline{\Gamma}_{\Sigma}:= (V_{\Sigma},\overline{\mathrm{E}}_{\Sigma})$ of $\Gamma_{\Sigma}$ to be a graph $\overline{\Gamma}_{\Sigma}$ containing $\Gamma_{\Sigma}$ and having same vertex set, such that $\overline{\mathrm{E}}_{\Sigma}$ is obtained from $\mathrm{E}_{\Sigma}$  by adding for each simple admissible loop $P$ (modulo permutation and reversal) of $\Gamma_{\Sigma}$, a \textit{residue edge} which is a new edge labeled with the residue inequality of $P$. Let moreover $\mathrm{Nontrivial}(\overline{\mathrm{E}}_{\Sigma})$ denote all the elements of $\overline{\mathrm{E}}_{\Sigma}$ that are no self-loop at $x_0$.
%We require moreover that $\Gamma_{\Sigma}'$ does not contain any self-loop; i.e., a residue inequality containing only one variable different from $x_0$ gives rise to an edge with endpoints $\{x_0,x_i\}$ and a residue inequality containing no other variable than $x_0$ does not give rise to any edge in $\Gamma_{\Sigma}'$. CHECK?????????????????????????????!!!!!!!!!!!!!!!!!!!!!!!!!!!!!!!!!!!!!!!!!!!!!!!!!!!!!!!!!!
Note that a closure is not necessarily unique since the initial vertex of each permutable loop can be chosen arbitrarily. We can now state the main theorem of~\cite{Sho}:

\begin{Thm}\label{Thm:theorem1}
$\Sigma$ is unsatisfiable if and only if $\overline{\Gamma}_{\Sigma}$ has an infeasible simple loop.
\end{Thm}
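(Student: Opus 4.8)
The plan is to prove the two directions separately. The ``if'' direction (an infeasible simple loop forces unsatisfiability) is soundness and is immediate; the ``only if'' direction, completeness, is where the work lies. For soundness, I would first record that for any admissible path the residue inequality is a logical consequence of the inequalities labelling its edges, since by definition it is obtained from them purely by transitive elimination of the intermediate variables. As every edge of $\overline{\Gamma}_\Sigma$ carries an inequality that is itself a consequence of $\Sigma$ (original edges lie in $\Sigma$, residue edges are derived from it by the same transitivity), the residue inequality of any admissible loop in $\overline{\Gamma}_\Sigma$ is implied by $\Sigma$. If such a loop is infeasible its residue has the shape $0 \succeq c$ with $c$ of the wrong sign (e.g. $0 > c$ with $c \ge 0$), an assertion no real assignment satisfies; hence $\Sigma$ is unsatisfiable.

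For completeness, assume $\Sigma$ is unsatisfiable. The engine is Fourier--Motzkin elimination, together with the observation that for systems with at most two variables per inequality it preserves that property. Concretely, to eliminate $x_i$ I pair each inequality in which $x_i$ has positive coefficient with each in which it has negative coefficient; combining $a x_i + b x_j \succeq c$ (with $a>0$) and $a' x_i + b' x_k \succeq c'$ (with $a'<0$) produces an inequality in $x_j$ and $x_k$ alone. The requirement that the two $x_i$-coefficients have opposite signs is exactly admissibility, and the combination is precisely the residue of the length-two admissible path through $x_i$ with endpoints $x_j$ and $x_k$. In graph terms, eliminating the vertex $x_i$ amounts to replacing every admissible two-edge path through $x_i$ by a residue edge joining its endpoints and then deleting $x_i$; a constant inequality $0 \succeq c$ appears exactly when the two endpoints coincide and the resulting coefficient cancels.

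Iterating over $x_1,\dots,x_n$ and invoking correctness of Fourier--Motzkin elimination, unsatisfiability of $\Sigma$ forces an inconsistent constant inequality $0 \succeq c$ to appear at some stage, since after eliminating every variable the surviving constant system is equisatisfiable with $\Sigma$ and a set of constant inequalities fails precisely when one of them fails individually. Unwinding the eliminations identifies this inequality as the residue of an admissible loop $L$ of $\Gamma_\Sigma$: it is a loop, rather than a path with distinct endpoints, precisely because the total variable-coefficient has cancelled, which can only happen upon returning to a vertex with opposite coefficients. Thus there is at least one infeasible admissible loop in $\Gamma_\Sigma$, carrying the strict/non-strict flag inherited from whether any constituent inequality was strict.

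It remains to pass from an infeasible admissible loop to an infeasible \emph{simple} one in $\overline{\Gamma}_\Sigma$, and I expect this to be the main obstacle. If $L$ revisits a vertex $w$, the sub-walk between two consecutive visits of $w$ is itself an admissible loop $L_1$ at $w$; once $L_1$ is simple its residue is by construction a residue edge of the closure, so I can contract the sub-walk $L_1$ in $L$ to that single edge and obtain a strictly shorter admissible loop. The delicate points are verifying that admissibility (and the relevant permutability) survives at the splice vertex, and that the residue constant of $L$ splits as a positive combination of those of the two pieces, so that infeasibility is inherited by one of them rather than lost. Carrying the $>$ versus $\ge$ case distinction through this bookkeeping is routine but must be done throughout; a descent on the number of edges then terminates at an infeasible simple loop of $\overline{\Gamma}_\Sigma$, completing the proof.
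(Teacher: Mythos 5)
First, a point of comparison: the paper does not prove this statement at all — it is quoted as the main theorem of Shostak's paper \cite{Sho} — so your attempt can only be judged on its own correctness. Your soundness direction is fine: residues of admissible paths are positive combinations of the edge inequalities, residue edges of the closure are themselves consequences of $\Sigma$, so an infeasible simple loop in $\overline{\Gamma}_{\Sigma}$ yields a false consequence of $\Sigma$.

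The completeness direction, however, has a genuine gap at exactly the step you flag as ``unwinding the eliminations''. Fourier--Motzkin \emph{merges} the occurrences of a variable: when it combines a derived one-variable inequality (a loop residue) with another inequality in that variable, the two occurrences of the variable are added. The residue of a walk in $\Gamma_{\Sigma}$, by contrast, eliminates each visit to a vertex separately, so FM derivations do not correspond to walk residues. The paper's own example shows the mismatch: the FM refutation forms $3x_1\ge -1$ from $\Sigma_1+\Sigma_2$ and then eliminates $x_1$ against $\Sigma_3$ and $x_3$ against $\Sigma_6$; the support of this derivation is a $2$-cycle on $\{x_1,x_2\}$ with a pendant path $x_1$--$x_3$--$x_0$ (a lollipop, with odd degree at $x_0$), which is not the edge multiset of any closed walk, and the walk residue of $x_1\to x_2\to x_1\to x_3$ is $x_1+2x_3\ge -1$, not $3x_3\ge -1$. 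One can sometimes repair this by re-traversing edges with adjusted multiplicities (here $x_0\to x_3\to x_1\to x_2\to x_1\to x_3\to x_0$, with $\Sigma_3$ and $\Sigma_6$ each used twice, has residue $0\ge 1/2$), but that requires an Eulerian-type argument showing every balanced, sign-coherent positive combination can be reorganized into a single admissible closed walk — an argument your outline neither states nor supplies.

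Your descent step is also broken as described. The residue edge that the closure adds for a simple loop $L_1$ at $w$ is labeled by the one-variable inequality $(a+b)w\succeq c$, hence joins $w$ to $x_0$; it is not a self-loop at $w$, so you cannot splice it in place of the closed sub-walk $L_1$ inside a longer walk and still have a walk. (When $a+b=0$ one can simply excise $L_1$ — the junction coefficients then do cancel — but in the essential case $a+b\neq 0$ the junction at $w$ no longer balances and the correct replacement object is again a lollipop, not a walk.) Two further obstacles: the closure is a one-round construction, adding residues only of simple loops of $\Gamma_{\Sigma}$ itself, and only once per loop modulo permutation and reversal with an arbitrarily chosen base vertex — whereas your descent implicitly needs residue edges based at your splice vertex and residues of loops of the closure. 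Showing that a single round of closure nonetheless suffices is precisely the nontrivial content of Shostak's theorem (his proof runs through the contrapositive, building a satisfying assignment when no infeasible simple loop exists), and no step of your outline substitutes for it.
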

As an example, consider the system 
\begin{align*}
\Sigma = \{ \Sigma_i \}_{i \in \{1,\dots,6\}} = \Bigl \{ &x_1 - x_2 \ge 0 ,  \  2x_1 + x_2 \ge -1 , \  x_3 - x_1 \ge 0 , \\
 &x_4 - x_3 \ge 0  ,\  x_3 - x_4 \ge - 1 , \ -x_3 \ge 1/2  \Bigr \}.
\end{align*}
It is easy to see that the only loop of $\Gamma_{\Sigma}$ contributing an edge to $\overline{\Gamma}_{\Sigma}$ is the loop 
\[
\Biggl( (x_1,x_2,x_1),(\{x_1,x_2\},\Sigma_1),(\{x_2,x_1\},\Sigma_2)  \Biggr)
\]
 having residue inequality $x_1 \ge -1/3 $. Now note that the loop
\[
\Biggl( (x_0,x_1,x_3,x_0),(\{x_0,x_1\},x_1 \ge -1/3 ),(\{x_1,x_3\},\Sigma_3),(\{x_3,x_0\},\Sigma_6)  \Biggr) \subset \overline{\Gamma}_{\Sigma}
\]
is infeasible and hence $\Sigma$ must be unsatisfiable according to the theorem.

% 23.09.1989 M.P.

\section{Injectivity of Linear Systems of Inequalities with at most Two Variables per Inequality}

%The next proposition implies that every polyhedral cone that can be expressed as the solution set of  a linear system of inequalities with at most two variables per inequality necessarily satisfies the assumptions of the characterization theorem for injective convex polyhedra that we proved in our work on injective convex polyhedra; it then follows, by another theorem (proved there) stating that a convex polyhedron in $l_{\infty}^n$ is injective if and only if all its tangent cones are injective, that every non-empty solution set to a linear system of inequalities with at most two variables per inequality is injective. 
For $j \in I_n=\{1,\dots,n\}$, let $F_j := [-1,1]^{j-1} \times \{ 1 \} \times [-1,1]^{n-j}$ which is a facet of the unit cube $[-1,1]^{n}$. Note that $\mathrm{relint}( F_j) = (-1,1)^{j-1} \times \{ 1 \} \times (-1,1)^{n-j}$.

\begin{Prop}
Let $C \subset \mathcal{Z}_n$ be a convex polyhedral cone with $\mathrm{int}(C) \neq \emptyset$ satisfying
\[
C = \bigcap_{\nu \in \mathcal{N}(C)} \left \{ x \in \R^n : x \cdot (a_{\nu} f_{\nu} + b_{\nu} g_{\nu}) \ge 0 \right \}
\]
with $f_{\nu},g_{\nu} \in \{0\} \cup \{e_1,\dots,e_n\}$ as well as $a_{\nu},b_{\nu} \in \R$ and $f_{\nu} \neq g_{\nu}$. There is then $(j,\tau) \in  I_n \times \{ \pm 1\}$ such that
\[
C \cap \mathrm{relint}(\tau F_{j}) \neq \emptyset = C \cap \mathrm{relint}(-\tau F_{j}).
\]
\end{Prop}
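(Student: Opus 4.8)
The plan is to recast the two facet conditions as feasibility statements about two-variable systems and then to play the graph machinery of the previous section against itself. First I would record the scaling observation that, since $C$ is a cone and $\mathrm{relint}(\tau F_j)=\{x_j=\tau\}\cap\mathrm{int}(\tau C_j)$, one has $C\cap\mathrm{relint}(\tau F_j)\neq\emptyset$ if and only if $C\cap\mathrm{int}(\tau C_j)\neq\emptyset$; thus the claim is exactly that some $(j,\tau)$ satisfies $\tau C_j\notin\mathcal{S}_C$ while $-\tau C_j\in\mathcal{S}_C$, i.e.\ condition $(ii)$ of Theorem~\ref{Thm:t2} for $K=C$. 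Now $C\cap\mathrm{int}(\tau C_j)\neq\emptyset$ is the satisfiability of the system obtained by adjoining to the defining system $\Sigma$ of $C$ the two-variable strict inequalities $\tau x_j-x_i>0$ and $\tau x_j+x_i>0$ for $i\in I_n\setminus\{j\}$, while $\mathrm{int}(C)\neq\emptyset$ is the satisfiability of the strict system $\Sigma^{\mathrm{str}}$ in which every inequality of $\Sigma$ is made strict. All of these systems still have at most two variables per inequality, so Shostak's Theorem~\ref{Thm:theorem1} applies to each of them.

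I would then argue by contradiction, assuming that for every $(j,\tau)$ the facet $\tau F_j$ meets $C$ precisely when $-\tau F_j$ does. Fix a generic $p\in\mathrm{int}(C)$, so that $|p_{i}|$ is uniquely maximized at some coordinate $j_0$ with sign $\tau_0$; then $p\in\mathrm{int}(\tau_0 C_{j_0})$, so $\tau_0 F_{j_0}$ is hit, whence by the standing assumption $-\tau_0 F_{j_0}$ is hit too. Thus there are $x^{+},x^{-}\in C$ on which $x_{j_0}$ is strictly dominant with signs $\tau_0$ and $-\tau_0$ respectively, and the nonnegative combination $w:=|x^{-}_{j_0}|\,x^{+}+|x^{+}_{j_0}|\,x^{-}\in C$ satisfies $w_{j_0}=0$. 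Taking $x^{+}=p$ and choosing $x^{-}$ generically among the witnesses we may assume $w\neq0$ (the case $w=0$ would force the line $\mathbb{R}p\subset C$, i.e.\ $p\in\mathrm{apex}(C)$, which a generic interior point avoids). This $w$ is the geometric avatar of a transitivity step: eliminating the shared variable $x_{j_0}$ from two defining inequalities whose $x_{j_0}$-coefficients have opposite signs yields a genuine \emph{two-variable} inequality valid on $C$ and free of $x_{j_0}$ — precisely a residue edge of $\overline{\Gamma}_{\Sigma}$. It is here that $C\in\mathcal{Z}_n$ is indispensable: eliminating one variable from two inequalities that each carry at most two variables produces an inequality that again carries at most two variables, so the process never leaves $\mathcal{Z}_n$.

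Finally I would iterate this descent. Each resolvent again has a dominant coordinate, the symmetry hypothesis again supplies an oppositely signed partner, and one continues eliminating; equivalently one keeps adjoining residue edges in $\overline{\Gamma}_{\Sigma}$. Since $\Gamma_{\Sigma}$ has finitely many vertices and the closure $\overline{\Gamma}_{\Sigma}$ is finite (Theorem~\ref{Thm:theorem1}), the chain of eliminated coordinates must close up into a simple admissible loop; propagating strictness from $\Sigma^{\mathrm{str}}$ around this loop makes its residue the false inequality $0>0$, so the loop is infeasible. By Theorem~\ref{Thm:theorem1} this renders $\Sigma^{\mathrm{str}}$ unsatisfiable, i.e.\ $\mathrm{int}(C)=\emptyset$, contradicting the hypothesis. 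The main obstacle is controlling exactly this elimination bookkeeping — guaranteeing that at each stage the oppositely signed partner can be chosen among the (original or already-produced) two-variable edges so that every resolvent stays two-variable, and that the procedure terminates in an infeasible \emph{simple} loop rather than wandering indefinitely. This is precisely what the finiteness of Shostak's closure $\overline{\Gamma}_{\Sigma}$ is designed to encapsulate, and the rigorous version would either pass to a minimal infeasible loop or run an induction on $n$ through the lower-dimensional TVPI cone $C\cap\{x_{j_0}=0\}$, the delicate point in the latter route being to lift an asymmetric facet of the slice back to $C$.
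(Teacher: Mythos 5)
Your opening reduction is fine: for a cone, $C\cap\mathrm{relint}(\tau F_j)\neq\emptyset$ if and only if $C\cap\mathrm{int}(\tau C_j)\neq\emptyset$, so the proposition is indeed condition $(ii)$ of Theorem~\ref{Thm:t2}. The genuine gap is in the descent that is supposed to produce an infeasible loop. Your witnesses $x^{+}$, $x^{-}$ and $w=|x^{-}_{j_0}|\,x^{+}+|x^{+}_{j_0}|\,x^{-}$ are \emph{points} of $C$ (primal objects), whereas a residue edge of $\overline{\Gamma}_{\Sigma}$ is a \emph{valid inequality} for $C$, obtained by resolving two defining inequalities of $\Sigma$ whose $x_{j_0}$-coefficients have opposite signs (a dual object). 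Exhibiting a point of $C$ with vanishing $j_0$-th coordinate neither identifies such a pair of defining inequalities nor shows that their resolvent lies on an admissible path; the ``geometric avatar'' sentence is precisely the step where primal information would have to be converted into a syntactic certificate, and no mechanism for this conversion is given. The bookkeeping problems you flag yourself are also real and not cosmetic: the partner supplied by the symmetry hypothesis at the next stage need not vanish at $j_0$, so eliminated coordinates do not accumulate and nothing forces the chain to close into a loop; and even granting a loop, infeasibility in the homogeneous strict system requires its residue to have coefficient sum zero, which your construction never arranges. Finally, the contradiction you aim at (unsatisfiability of all of $\Sigma^{\mathrm{str}}$, i.e.\ $\mathrm{int}(C)=\emptyset$) is a much stronger global statement than the symmetry hypothesis plausibly yields in one stroke, and it is not the contradiction the paper derives.

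The paper's proof is your fallback route, actually carried out: induction on $n$. If some $(s,\sigma)$ is already asymmetric, done; otherwise both $\pm F_s$ meet $C$, hence $\mathrm{relint}(C\cap\partial H_{e_s})\neq\emptyset$, and the slice $C^{0}:=\widehat{\pi}_s(C\cap\partial H_{e_s})$ is again a TVPI cone (inequalities containing $x_s$ degenerate to one-variable inequalities), so the induction hypothesis gives an asymmetric pair $(t,\tau)$ for $C^{0}$. The delicate lifting step you identify but do not perform is done with Theorem~\ref{Thm:theorem1} used in the direction opposite to yours: the \emph{emptiness} of the slice against $-\tau F_t$ makes the corresponding sliced system unsatisfiable, hence yields an infeasible simple loop in its closure; analyzing which of its edges stem from inequalities originally containing $x_s$ or $x_t$, and splicing those original edges back in, produces an admissible \emph{path} (not a loop) in $\Gamma_{\Sigma}$ from $x_s$ to $x_t$, whose residue is a genuine two-variable valid inequality $a x_s+b x_t\ge 0$ for $C$. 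Since both $\pm F_s$ meet $C$, one gets $|a|<|b|$, which forces $C\cap\mathrm{relint}(-\mathrm{sign}(b)F_t)=\emptyset$ and hence the asymmetric pair $(t,\tau)$. Shostak's theorem is thus the primal-to-dual bridge your sketch is missing; as written, your proposal does not constitute a proof.
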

\begin{proof}
We proceed by induction on $n$. It is easy to see that the result holds for $n =1$ and $n=2$. We assume that the result holds for $\{ 1, \dots, n-1\}$ and show that it consequently holds for $n$. Since $\mathrm{int}(C) \neq \emptyset$, there is $(s,\sigma) \in I_n \times \{\pm 1\}$ such that $C \cap \mathrm{relint}(\sigma F_s) \neq \emptyset$. If $C \cap \mathrm{relint}(-\sigma F_s) = \emptyset$, we are done. Hence, assume that 
\begin{equation}\label{eq:equation0}
C \cap \mathrm{relint}(- F_s) \neq \emptyset \neq C \cap \mathrm{relint}( F_s)
\end{equation}
which recalling the notation $\partial H_{e_s} = \{ x \in \R^n : x_s = 0 \}$ implies 
\begin{equation}\label{eq:equation1}
\mathrm{relint}(C \cap \partial H_{e_s}) \neq \emptyset.
\end{equation}
The map $\widehat{\pi}_s$ given by $(x_1,x_2,\dots,x_s,\dots,x_{n}) \mapsto (x_1,x_2,\dots,\widehat{x}_s,\dots,x_{n})$ is, when restricted to $\partial H_{e_s}$, an isometry with the property that $C^0 := \widehat{\pi}_s(C \cap \partial H_{e_s}) \in \mathcal{Z}_{n-1}$. To see that the latter holds, assume without loss of generality that $f_{\nu} \neq e_s$ for every $\nu \in \mathcal{N}(C)$. We can write $\mathcal{N}(C) = \mathcal{N}(C)^{\not{s}} \sqcup \mathcal{N}(C)^{s}$ where $\mathcal{N}(C)^{\not{s}}$ is the set of all $\nu$ such that $f_{\nu} \neq e_s \neq g_{\nu}$ and $\mathcal{N}(C)^{s}$ the set of those such that $f_{\nu} \neq e_s = g_{\nu}$. We then write $C^{\not{s}} :=  \cap_{\nu \in \mathcal{N}(C)^{\not{s}}} H_{\nu}$ and $C^{s} :=  \cap_{\nu \in \mathcal{N}(C)^s} H_{\nu}$ which implies $C = C^s \cap C^{\not{s}}$. It is easy to see that
\[
C \cap \partial H_{e_s} = C^{\not{s}} \cap \partial H_{e_s} \cap \bigcap_{\nu \in \mathcal{N}(C)^s} H_{a_{\nu}f_{\nu}}.
\]
Applying $\widehat{\pi}_s$ on both sides, we get:
\begin{align*}
C^0 
&= \widehat{\pi}_s(C^{\not{s}} \cap \partial H_{e_s}) \cap \widehat{\pi}_s \left(\partial H_{e_s} \cap \bigcap_{\nu \in \mathcal{N}(C)^s} H_{a_{\nu}f_{\nu}} \right) \\
&= \bigcap_{\nu \in \mathcal{N}(C)^{\not{s}}} H_{\widehat{\pi}_s(\nu)} \cap \bigcap_{\nu \in \mathcal{N}(C)^s} H_{\widehat{\pi}_s(a_{\nu}f_{\nu})}
\in \mathcal{Z}_{n-1}.
\end{align*}
It follows by the induction hypothesis that there is $(t,\tau) \in (I_n \setminus \{s\}) \times \{ \pm 1\}$ such that 
\begin{equation}\label{eq:equation3}
C^0 \cap \widehat{\pi}_s(\mathrm{relint}(\tau F_t) \cap \partial H_{e_s} ) \neq \emptyset = C^0 \cap \widehat{\pi}_s(\mathrm{relint}(-\tau F_t) \cap \partial H_{e_s}).
\end{equation}
Note moreover that $C^0 \cap \widehat{\pi}_s(\mathrm{relint}(\tau F_t) \cap \partial H_{e_s} ) \neq \emptyset$ implies $C \cap \mathrm{relint}(\tau F_t) \neq \emptyset$. Furthermore, if $C^{\not{s}} \cap \mathrm{relint}(-\tau F_t) \cap \partial H_{e_s} = \emptyset$, then $C \cap \mathrm{relint}(-\tau F_t) = \emptyset$ and thus we are done. We thus assume that 
\begin{equation}\label{eq:equation31}
C^{\not{s}} \cap \mathrm{relint}(-\tau F_t) \cap \partial H_{e_s} \neq \emptyset.
\end{equation}
We now show that one can find $a,b \in \R$ with $b \neq 0$ such that $C \subset H_{a e_s + b e_t }$. We can assume without loss of generality that in addition to $f_{\nu} \neq e_s$, one has $f_{\nu} \neq e_t$ for any $\nu \in \mathcal{N}(C)$ since otherwise we can find the desired normal vector $a e_s + b e_t $. Let $\mathcal{N}(C)^{\not{t}}$ be the set of all $\nu$ such that $f_{\nu} \neq e_t \neq g_{\nu}$ and $C^{\not{t}} :=  \cap_{\nu \in \mathcal{N}(C)^{\not{t}}} H_{\nu}$. Let
\begin{align*}
W_t := (-1,1)^n \cup \mathrm{relint}( F_t) \cup \mathrm{relint}(-F_t).
\end{align*}
Note that $W_t \cap C \in \mathcal{Z}_n$ and $\widehat{\pi}_t \circ \widehat{\pi}_s(C \cap \partial H_{e_s} \cap \mathrm{relint}(-\tau F_t)) \in \mathcal{Z}_{n-2}$. Let $\Sigma$ and $\Sigma^0$ denote the respective associated systems induced by the supporting half-spaces. Remark that $\Sigma^0$ is obtained by plugging $x_s = 0$ and $x_t = - \tau$ in every inequality of $\Sigma$ and deleting those loops corresponding to those inequalities associated to $W_t$ that are made trivial. Note that $\Sigma^0$ is unsatisfiable by \eqref{eq:equation3} and thus by Theorem \ref{Thm:theorem1}, there is an infeasible (hence by definition admissible) simple loop $L$ in every closure $\overline{\Gamma}_{\Sigma^0}$ of the graph $\Gamma_{\Sigma^0}$ associated to the system $\Sigma^0$.

Let now $\Gamma_{\Sigma^s} := (\mathrm{V}_{\Sigma} \setminus \{x_s,x_t\} ,\mathrm{E}_{\Sigma^s})$ where $\mathrm{E}_{\Sigma^s}$ consists of all labeled edges $E \in \mathrm{E}_{\Sigma^0}$ so that there is $\left( \{ y_{\mu}, x_s \} , a_{\mu} y_{\mu} + b_{\mu} x_s  \succeq c_{\mu}  \right) \in \mathrm{E}_{\Sigma}$ such that $E = \left( \{ y_{\mu}, x_0 \} ,  a_{\mu} y_{\mu}  \succeq c_{\mu}   \right)$ (possibly with $y_{\mu}= x_0$). Analogously, $\Gamma_{\Sigma^t} := (\mathrm{V}_{\Sigma} \setminus \{x_s,x_t\} ,\mathrm{E}_{\Sigma^t})$ where $\mathrm{E}_{\Sigma^t}$ consists of all labeled edges $E \in \mathrm{E}_{\Sigma^0}$ so that there is $\left( \{ y_{\mu}, x_t \} , a_{\mu} y_{\mu} + b_{\mu} x_t  \succeq c_{\mu}  \right) \in \mathrm{E}_{\Sigma}$ such that $E = \left( \{ y_{\mu}, x_0 \} ,  a_{\mu} y_{\mu}  \succeq c_{\mu} + \tau b_{\mu}  \right)$ (possibly with $y_{\mu}= x_0$). Now, it is easy to see that for $u \in \{s,t\}$, one can choose closures satisfying:
\begin{equation}\label{eq:equation4}
\mathrm{Nontrivial} \Bigl(\overline{\mathrm{E}}_{\Sigma^0} \setminus \mathrm{E}_{\Sigma^u} \Bigr) \subset \mathrm{Nontrivial}\Bigl( \overline{ \mathrm{E}_{\Sigma^0} \setminus \mathrm{E}_{\Sigma^u} }\Bigr).
\end{equation}
Indeed, note that since $\mathrm{E}_{\Sigma^u} \subset \mathrm{E}_{\Sigma^0}$, it follows that
\begin{equation}\label{eq:equation5}
\overline{\mathrm{E}}_{\Sigma^0} \setminus \mathrm{E}_{\Sigma^u} = \bigl[ \overline{\mathrm{E}}_{\Sigma^0} \setminus \mathrm{E}_{\Sigma^0} \bigr] \cup \bigl[ \mathrm{E}_{\Sigma^0} \setminus \mathrm{E}_{\Sigma^u} \bigr].
\end{equation}
By definition, $\mathrm{E}_{\Sigma^0} \setminus \mathrm{E}_{\Sigma^u}  \subset \overline{\mathrm{E}_{\Sigma^0} \setminus \mathrm{E}_{\Sigma^u} }$. Now, consider an admissible loop $L_0 \subset \Gamma_{\Sigma^0}$. If $L_0$ contains an edge of $\Gamma_{\Sigma^u}$, then by admissibility (in particular $x_0$ does not arise as intermediate vertex), $L_0$ is a loop starting at $x_0$ and thus $L_0$ does not induce any nontrivial residue edge. Hence, if $L_0 \subset \Gamma_{\Sigma^0}$ induces one nontrivial residue edge in $\overline{\mathrm{E}}_{\Sigma^0} \setminus \mathrm{E}_{\Sigma^0}$, then in particular, the residue equation of $L_0$ does not contain $x_0$ alone and thus $L_0$ does not contain any edge of $\Gamma_{\Sigma^u}$. This thus implies that $\mathrm{Nontrivial}(\overline{\mathrm{E}}_{\Sigma^0} \setminus \mathrm{E}_{\Sigma^0}) \subset \mathrm{Nontrivial}(\overline{\mathrm{E}_{\Sigma^0} \setminus \mathrm{E}_{\Sigma^u}} )$. It finally follows by \eqref{eq:equation5} that \eqref{eq:equation4} holds. %By the same argument, one even sees that $(\mathrm{E}_{\Sigma^0})' \setminus \mathrm{E}_{\Sigma^u} \subset (\mathrm{E}_{\Sigma^0} \setminus (\mathrm{E}_{\Sigma^s} \cup \mathrm{E}_{\Sigma^t}) )'$.

Now, if $L \subset \overline{\Gamma}_{\Sigma^0}$ is nontrivial and does not contain any edge of $\Gamma_{\Sigma^s}$, we obtain in view of \eqref{eq:equation4}:
\begin{align*}
L &\subset \Bigl(\mathrm{V}_{\Sigma} \setminus \{x_s,x_t\} ,\mathrm{Nontrivial}(\overline{\mathrm{E}}_{\Sigma^0} \setminus \mathrm{E}_{\Sigma^s})\Bigr) \\
&\subset \Bigl(\mathrm{V}_{\Sigma} \setminus \{x_s,x_t\} ,\mathrm{Nontrivial}(\overline{\mathrm{E}_{\Sigma^0} \setminus \mathrm{E}_{\Sigma^s} })\Bigr)
=:\Gamma.
\end{align*}
But $\Gamma$ has $Q = \widehat{\pi}_t \circ \widehat{\pi}_s \left( C^{\not{s}} \cap \mathrm{relint}(-\tau F_t) \cap \partial H_{e_s} \right)$ as associated solution set. Thus $\Gamma$ contains an infeasible simple loop and therefore its associated system is unsatisfiable by Theorem \ref{Thm:theorem1}. Hence, $Q = \emptyset $ and thus $C^{\not{s}} \cap \mathrm{relint}(-\tau F_t) \cap \partial H_{e_s}= \emptyset$, which contradicts \eqref{eq:equation31}. It follows that $L$ has to contain an edge of $\Gamma_{\Sigma^s}$.

Similarly, if $L$ is nontrivial and does not contain any edge of $\Gamma_{\Sigma^t}$, we obtain in view of \eqref{eq:equation4}:
\begin{align*}
L &\subset \Bigl(\mathrm{V}_{\Sigma} \setminus \{x_s,x_t\} ,\mathrm{Nontrivial}(\overline{\mathrm{E}}_{\Sigma^0} \setminus \mathrm{E}_{\Sigma^t})\Bigr) \\
&\subset \Bigl( \mathrm{V}_{\Sigma} \setminus \{x_s,x_t\} ,\mathrm{Nontrivial}(\overline{\mathrm{E}_{\Sigma^0} \setminus \mathrm{E}_{\Sigma^t}})\Bigr)
=:\Gamma.
\end{align*}
But $\Gamma$ has $Q = \widehat{\pi}_t \circ \widehat{\pi}_s \left( C^{\not{t}} \cap W_t \cap \partial H_{e_s} \right)$ as associated solution set. Thus $\Gamma$ contains an infeasible simple loop and therefore its associated system is unsatisfiable by Theorem \ref{Thm:theorem1}. Hence, $Q = \emptyset $ and thus $C^{\not{t}} \cap W_t \cap \partial H_{e_s} = \emptyset$, which contradicts \eqref{eq:equation1} as one can easily see by noting that  $C^{\not{t}} \cap \partial H_{e_s}$ is a cone. It follows that $L$ has to contain an edge of $\Gamma_{\Sigma^t}$.

Finally, note that a self-loop in $\overline{\Gamma}_{\Sigma^0}$ at $x_0$ cannot arise as intermediate segment on an admissible path and no self-loop at $x_i \neq x_0$ can be induced by a loop containing an edge in $\mathrm{E}_{\Sigma^s} \cup \mathrm{E}_{\Sigma^t}$. Hence the only remaining case is when $L$ is a self-loop at $x_0$. But then, since $\Gamma_{\Sigma^0}$ is defined so as not to contain any infeasible self-loop at $x_0$, it follows that $L$ must be induced by a simple nontrivial admissible loop $L_0$ in $\Gamma_{\Sigma^0}$ and as above, one has that $L_0$ needs to be containing an edge of $\mathrm{E}_{\Sigma^s}$ as well as an edge of $\mathrm{E}_{\Sigma^t}$. 

Thus, up to replacing the loop $L \subset \overline{\Gamma}_{\Sigma^0}$ by $L_0$ if necessary, we can assume that $L$ contains an edge in $\mathrm{E}_{\Sigma^s}$ as well as an edge in $\mathrm{E}_{\Sigma^t}$. It follows that $L$ has starting or ending edge, let us say without loss of generality starting edge $E_r = \left( \{ x_0, x_r \} , b_r x_r  \succeq c_r  \right) \in \mathrm{E}_{\Sigma^s}$ and accordingly final edge $E_u = \left( \{ x_0, x_u \} ,  b_u x_u  \succeq c_u   \right) \in \mathrm{E}_{\Sigma^t}$ for some $x_r,x_u \in \mathrm{V} \setminus \{x_0,x_s,x_t \}$. For any edge $E$ of $L$ different from $E_u$ and $E_r$, $E$ does not contain $x_0$ as endpoint by admissibility of $L$, hence
\[
E \in \mathrm{E}_{\Sigma^0} \setminus  (\mathrm{E}_{\Sigma^s} \cup \mathrm{E}_{\Sigma^t} )
\]
which means that $E$ has a corresponding edge $E^{\Sigma} \in \mathrm{E}_{\Sigma}$ that is labeled by the same equation as $E$ and that has thus the same endpoints (these are thus different from $x_s$ and $x_t$). Moreover, by definition of $\mathrm{E}_{\Sigma^u}$, we have edges $E^{\Sigma}_r,E^{\Sigma}_u \in \mathrm{E}_{\Sigma}$ corresponding to $E_r$ and $E_u$ which satisfy $E^{\Sigma}_r = \left( \{ x_r, x_s \} ,  a_r x_r + b_r x_s  \succeq 0   \right)$ and $E^{\Sigma}_u = \left( \{ x_u, x_t \} , a_u x_u + b_u x_t  \succeq 0  \right)$. We thus obtain an admissible (simple) path $P \subset \Gamma_{\Sigma}$ from $x_s$ to $x_t$. The residue inequality of $P$ is then of the form $a x_s + b x_t \succeq 0$ and thus $C \subset H_{\nu}$ for $\nu := ae_s + b e_t$. By \eqref{eq:equation0}, it follows that $|a| < |b|$ and thus by an easy argument $C \cap \mathrm{relint}(- \mathrm{sign}(b) F_t) = \emptyset$. Since we assumed that $C \cap \mathrm{relint}(\tau F_t) \neq \emptyset$, it follows that $\mathrm{sign}(b) = \tau$ and thus
\[
C \cap \mathrm{relint}(\tau F_t) \neq \emptyset = C \cap \mathrm{relint}(- \tau F_t).
\]
This proves the induction step and finishes the proof.
\end{proof}

%\bigskip
{\bf Acknowledgements.}
I am very grateful to my PhD advisor Prof. Dr. Urs~Lang for proposing this research topic to me and for reading this work as well as earlier versions. This research was partially supported by the Swiss National Science Foundation.

%\addcontentsline{toc}{section}{References}

%%%%%%%%%%%%%%%%%%%%%%%%%%%%%%%%%%%%%%%%%%%%%%%%%%%%%%%%%%%%%%%%%%%%%%%%%%%%%%%

\end{document}